\documentclass[12pt]{amsart}
\usepackage[active]{srcltx}
\usepackage{calc,amssymb,amsthm,amsmath,amscd, eucal,ulem,stmaryrd}
\usepackage{alltt}
\usepackage[left=1.35in,top=1.25in,right=1.35in,bottom=1.25in]{geometry}

\RequirePackage[dvipsnames,usenames]{color}

\normalem
\input{kmacros3.sty}
\input{xy}
\xyoption{all}
\usepackage{tikz}
\usepackage{mabliautoref}

\DeclareSymbolFont{rsfs}{OMS}{rsfs}{m}{n}
\DeclareSymbolFontAlphabet{\scr}{rsfs}

\renewcommand{\fram}{\mathfrak{m}}
\numberwithin{equation}{theorem}

\usepackage{fullpage}

\usepackage{setspace}
\usepackage{hyperref}

\usepackage{enumerate}
\usepackage{graphicx}
\usepackage[all,cmtip]{xy}

\usepackage{upgreek}

\usepackage{verbatim}

\def\todo#1{\textcolor{Mahogany}%
{\footnotesize\newline{\color{Mahogany}\fbox{\parbox{\textwidth-15pt}{\textbf{todo: } #1}}}\newline}}


\begin{document}

\title{Global generation of test ideals in mixed characteristic and applications}
\author{Christopher Hacon, Alicia Lamarche, Karl Schwede}
\address{Department of Mathematics, University of Utah, Salt Lake City, UT 84112}
\begin{abstract}Suppose that $X$ is an integral scheme (quasi-)projective over a complete local ring of mixed characteristic.  Using ideas of Takamatsu-Yoshikawa and Bhatt-Ma-et. al, we define a notion of a $\bigplus$-test ideal on $X$, including for divisors  and linear series.  We obtain global generation results in this setting  that generalize the well known global generation results obtained via multiplier ideal sheaf techniques in characteristic $0$ and via test ideals in characteristic $p>0$.  We also obtain applications to the order of vanishing of linear series and to the diminished base locus in mixed characteristic similar to results of Ein-Lazarsfeld-\mustata-Nakamaye-Popa, Nakayama, and \mustata{} in the equal characteristic case.
\end{abstract}
\thanks{C.~Hacon was supported by NSF Grant
\#1801851, NSF FRG  Grant \#1952522 and by a grant from the Simons Foundation; Award Number:
256202.}
\thanks{A.~Lamarche was supported by NSF RTG Grant \#1840190 and an NSF Postdoctoral Fellowship.}
\thanks{K.~Schwede was supported by NSF Grants \#1801849 and \#2101800, NSF FRG  Grant \#1952522  and  a  Fellowship  from  the  Simons Foundation.}
\maketitle

\section{Introduction}

One of the most useful propeties of multiplier ideals $\mJ(X, \Delta)$ on a projective variety in characteristic zero is Nadel vanishing (a variant of Kawamata-Viehweg vanishing).  It states that $H^i(X, \cO_X(K_X + M) \otimes \mJ(X, \Delta)) = 0$ for $i > 0$ assuming that $K_X+M$ is a Cartier divisor such that $M -   \Delta$ is ample, \cite{NadelMultiplierIdealSheaves,LazarsfeldPositivity2}.  This in turn implies effective global generation results, for instance on a nonsingular projective variety $X$ of dimension $n$ we have that 
\[
    \mJ(X, \Delta) \otimes \cO_X(K_X + M + m L)
\]
is globally generated, for $m \geq n$ when $L$ is an ample divisor such that $\cO_X(L)$ is globally generated, see \cite[Proposition 9.4.26]{LazarsfeldPositivity2}.

Building on global generation results for $\cO_X(K_X + mL)$ for mildly singular varieties in positive characteristic (\cite{SmithFujitaFreenessForVeryAmple,KeelerFujita,HaraInterpretation}), similar results were also obtained for test ideals $\tau(X, \Delta)$ in characteristic $p > 0$ where instead of using resolution of singularities and Nadel/Kawamata-Viehweg vanishing, one uses Frobenius and Serre/Fujita vanishing.  Indeed, we have that if $M$ and $L$ are as above, then 
\[
    \tau(X, \Delta) \otimes \cO_X(K_X + M + m L)
\]
is not only globally generated, but is globally generated by the subspace of Frobenius stable sections \[ S^0(X, \Delta, \cO_X(K_X + M + mL)) \subseteq H^0(X, \cO_X(K_X + M + mL)),\] see \cite{SchwedeACanonicalLinearSystem,MustataNonNefLocusPositiveChar}.  This global generation result has found several important applications, including  towards understanding the non-nef locus (or diminished base locus) in positive characteristic in \cite{MustataNonNefLocusPositiveChar}.

In mixed characteristic, \ie for schemes defined over $\bZ_p$, we have theories of test/multiplier ideals \cite{MaSchwedePerfectoidTestideal} for complete local Noetherian rings.  However, their properties are still not well understood, for example, we still do not know whether they behave well under localization.  For projective schemes over a mixed characteristic complete local Noetherian ring, we also have the space of $\bigplus$-stable (in other words, trace invariant) sections
\[
    \myB^0(X, \Delta, \cO_X(K_X + M + mL))\subseteq H^0(X, \cO_X(K_X + M + mL)),
\] 
roughly analogous to $S^0$ above and which was successfully applied to the study of the 3-fold minimal model program in mixed characteristic \cite{TY-MMP,BMPSTWW-MMP}.  The usefulness of these special sections $\myB^0$ is derived from Bhatt's vanishing theorem \cite{BhattAbsoluteIntegralClosure,BMPSTWW-MMP} which can be viewed as a variant of Kawamata-Viehweg vanishing which holds on $X^+$ (the absolute integral closure of $X$). In this paper, we use these global sections $\myB^0$ to define a mixed characteristic test ideal.  

Suppose that $X \to \Spec R$ is a normal integral scheme which we assume is projective over a complete local Noetherian ring $(R, \fram)$.  For simplicity of statements in the introduction, we assume that $K_X$ is Cartier (for instance, if $X$ is Gorenstein).  We \emph{define} the ideal $\tau_+(\cO_X, \Delta) \subseteq \cO_X$ so that $\myB^0(X, \Delta, \cO_X(K_X + M))$ globally generates 
\[
    \tau_+(\cO_X, \Delta) \otimes \cO_X(K_X + M)
\]
for any sufficiently ample divisor $M$.  

We then show that this definition is independent of the choice of the sufficiently ample divisor $M$ in \autoref{prop.Tau+IndependentOfAmple}.   When $K_X + \Delta$ is $\bQ$-Cartier, we prove it agrees with the test ideal if $X$ is of equal characteristic $p > 0$ in \autoref{thm.AgreesWithCharPTestIdeal}.  We show it transforms under finite maps with the same formula as the test ideal and multiplier ideal in \autoref{thm.TransofmrationRulesUnderFiniteMaps}.  We also introduce an adjoint variant $\adj_+^D(\cO_X, \Delta)$ and show it restricts to $\tau_+(\cO_{D^N}, {\rm Diff}_{D^N}(D+\Delta))$ on the normalization $D^N$ of $D$ producing an adjunction/inversion of adjunction result in \autoref{thm.AdjunctionForAdjTau}.  

One of the obstructions to the theory of mixed characteristic test ideals as introduced in \cite{MaSchwedeSingularitiesMixedCharBCM} was the fact that the test ideals did not behave well under localization.  However, we are able to prove that the sheaf $\tau_+(\cO_X, \Delta)$ we study is defined affine locally on $X$, at least when $K_X + \Delta$ is $\bQ$-Cartier.  That is, we show the following:

\begin{theoremA*}[{\autoref{sec.Tau+OnAffineCharts}}]
    Suppose $R$ is a complete local Noetherian domain and $T = R[x_1, \dots, x_m]/J$ is a normal finite type domain over $R$.  Suppose that $U = \Spec T$ and $\Delta_U$ is a divisor on $U$ so that $K_U + \Delta_U$ is $\bQ$-Cartier.  Suppose then that $X_1, X_2$ are two integral normal projective compactifications of $U$ over $\Spec R$ with $\Delta_1, \Delta_2$ on $X_1, X_2$ such that $K_{X_i} + \Delta_i$ is $\bQ$-Cartier and $\Delta_i |_U = \Delta_U$.  Then we have that 
    \[
        \tau_+(\cO_X, \Delta_1)|_U = \tau_+(\cO_X, \Delta_2)|_U.
    \]        
\end{theoremA*}

It follows that we can define $\tau_+(U, \Delta_U)$ to be this restriction.  Furthermore, the formation of $\tau_+(U, \Delta_U)$ commutes with localization by a single element of $T$ (in other words, restricting to a smaller affine subset of $U$).  Finally, since any quasi-projective scheme  $Y$ over $\Spec R$ is the union of such affine schemes, we have a well-defined notion of $\tau_+(\cO _Y, \Delta)$. 

We prove the following optimal global generation property, hinted at on the first page:
\begin{theoremB*}[{{\autoref{cor.GlobalGenerationViaTauOX}}}]
    With notation as above, assuming that $ K_X+M$ is a Cartier divisor such that $M - \Delta$ is big and semi-ample, $\cO_X(L)$ is a globally generated and ample line bundle, and $m \geq n$ where $n = \dim X_{\fram}$ is the dimension of the closed fiber, then
    \[
        \tau_+(\cO_X, \Delta) \otimes \cO_X(K_X + M + mL)
    \]
    is globally generated by $\myB^0(X, \Delta, \cO_X(K_X + M + mL))$.  
\end{theoremB*}
\noindent
%
Note that our method of proof is via a Skoda-type complex, see \cite[Section 9.6.C]{LazarsfeldPositivity2}.  A similar method was also used in characteristic $p > 0$ in \cite{SchwedeTuckerTestIdealsNonPrincipal}.  In characteristic zero, a common proof of the analogous result is obtained via Castelnuovo-Mumford regularity \cite{LazarsfeldPositivity2}, and that approach was also used in characteristic $p > 0$ in \cite{KeelerFujita,SchwedeACanonicalLinearSystem, MustataNonNefLocusPositiveChar, SatoStabilityOfTestIdeals}.   Related results were also obtained in positive and mixed characteristic \cite{SmithFujitaFreenessForVeryAmple,BMPSTWW-MMP}  by using the fact that the (completion of the) graded absolute integral closure of the graded section ring is big Cohen-Macaulay.  

In fact, inspired by our Skoda-type argument, we also introduce a notion of test ideals of ideal sheaves $\tau(\cO_X, \fra^t)$ and show a Skoda-type result for these sheaves, see \autoref{thm.SkodaWithoutTooMuchGenerality}, and deduce a new Brian{\c{c}}on-Skoda type theorem in mixed characteristic.

Using our global generation result, we follow the ideas of \cite{MustataNonNefLocusPositiveChar} in characteristic $p > 0$, which itself uses the ideas of \cite{NakayamaZariskiDecompAndAbundance,EinLazMusNakPopAsymptoticInvariantsOfBaseLoci}.  We obtain the following two results analogous to some of the main results of those papers.

\begin{theoremC*}[{{\autoref{thm.OrdIndependence}}}]
    Let $Z$ be a closed integral subscheme of a regular integral scheme $X$ projective over our complete local Noetherian ring ${\rm Spec }(R)$ and $D$ a big $\mathbb Q$-divisor. Then ${\rm ord}_Z(||D||)$ is independent of the representative in the numerical class of $D$.
\end{theoremC*}

\begin{theoremD*}[{\autoref{thm.ZInDiminishedBaseLocus}}]
      With notation as above, let $Z$ be a closed irreducible reduced subscheme of a regular scheme $X$. Assume that $R/\frak m$ is infinite.
    If $D$ is big then the following are equivalent
    \begin{enumerate}
        \item $Z\not \subset {\mathbf B}_-(D)$,
        \item there exists a divisor $G$ such that $Z\not \subset {\rm Bs}|mD+G|$ for all $m\geq 1$,
        \item there exists an integer $M>0$ such that ${\rm ord}_Z(|mD|)\leq M$ for all $m\gg 0$,
        \item  ${\rm ord}_Z(||D||)=0$, and
        \item $\tau _+(\omega _X,\lambda \cdot ||D||)$ does not vanish along $Z$ for all $\lambda > 0$.
    \end{enumerate}
\end{theoremD*}

Also see \cite{CacciolaDiBiagioAsymptoticBaseLociOnSingular,SatoStabilityOfTestIdeals,MurayamaTheGammaConstruction} from some related results on singular varieties in equal characteristic and compare with \cite[Conjecture 2.7]{BoucksomBroustetPacienza}.

Using the same method as the proof of the above result, we also observe that if $z \in X$ is a point (possibly non-closed) so that $X_z$ is regular and $\mult_z(\Delta) < 1$, then $\tau_+(\cO_X, \Delta)$ agrees with $\cO_X$ in a neighborhood of $z$, see \autoref{cor.MultiplicityBoundOnThreshold}.  Note similar statements are known in positive characteristic, see \cite[Theorem 1.1]{KadyrsizovaKenkelPageSinghSmith-ExtramalSings} where it is also shown that such bounds are essentially optimal, also see \cite{SatoStabilityOfTestIdeals}.

\subsection*{Acknowledgements}

The authors thank Bhargav Bhatt, Rankeya Datta, Linquan Ma, Zsolt Patakfalvi, Shunsuke Takagi, Kevin Tucker, Joe Waldron, Jakub Witaszek, Shou Yoshikawa for valuable discussions.  Zsolt Patakfalvi in particular suggested we explore whether we can define $\tau_+(\cO_X, \Delta)$ for quasi-projective schemes over $R$.  Shunsuke Takagi suggested we could leverage our Skoda-type results to obtain a new Brian{\c{c}}on-Skoda type theorem.  Takumi Murayama pointed out a number of useful references.  We thank Seungsu Lee, Linquan Ma, Takumi Murayama, Zsolt Patakfalvi and Shunsuke Takagi for very useful comments on previous drafts of this paper.

\section{Preliminaries}

\subsection{Local and Matlis duality}
    We briefly review local and Matlis duality.

    Suppose $(R, \fram, k)$ is a complete local ring and $E = E_R(k)$ is the injective hull of the residue field.  Recall that the exact functor $\bullet^{\vee} = \Hom(\bullet, E)$ is faithful, takes Noetherian modules to Artinian modules (and Artinian modules to Noetherian ones),  and for any finitely generated $R$-module (or complex in $D^b_{fg}(R)$) we have a natural isomorphism of functors from the identity to $\bullet^{\vee\vee}$.

    We fix a normalized dualizing complex in the sense of \cite{HartshorneResidues}, that is $\omega_R^{\mydot}$ has its first nonzero cohomology in degree $-\dim R$.  For any finite type scheme $f : X \to \Spec R$ we fix $\omega_X^{\mydot} = f^! \omega_R^{\mydot}$.  

    \begin{lemma}
        Suppose that $X$ is projective over a complete local ring $(R, \fram, k = R/\fram)$.  Suppose further that $\sF$ is a coherent sheaf on $X$.  Let $\bullet^{\vee} = \Hom_R(\bullet, E)$ denote Matlis duality where $E = E_{k}$ is the injective hull of the residue field.  Then we have the following natural isomorphism in the derived category:
        \[
            \myR \Gamma(X, \myR \sHom_X(\sF, \omega_X^{\mydot}))^{\vee} \cong \myR \Gamma_{\fram}(\myR \Gamma(X, \sF)).
        \]
        In particular, if $\sF = \sL^{-1}$ is a line bundle and $X$ is Cohen-Macaulay of dimension $n$ (so that $\omega_X^{\mydot} = \omega_X[n]$), then 
        \[
            \big(H^{i}(X, \omega_X \otimes \sL)\big)^{\vee} = H^{n-i}_{\fram}(\myR \Gamma(X, \sL^{-1})).
        \]
    \end{lemma}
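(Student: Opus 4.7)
The plan is to derive the displayed isomorphism by composing Grothendieck duality for the proper morphism $f\colon X \to \Spec R$ with local duality over the complete local Noetherian ring $R$; these are the only two ingredients.

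First I would apply Grothendieck duality. Since $f$ is proper and $\omega_X^{\mydot} = f^! \omega_R^{\mydot}$ by our choice of normalized dualizing complex, we obtain the natural isomorphism
\[
    \myR f_* \myR \sHom_X(\sF, \omega_X^{\mydot}) \cong \myR \Hom_R(\myR f_* \sF, \omega_R^{\mydot}).
\]
Because the base is affine we have $\myR f_* = \myR \Gamma(X, \blank)$, and because $f$ is proper with $\sF$ coherent, the complex $\myR \Gamma(X, \sF)$ lies in $D^b_{\mathrm{fg}}(R)$. So this rewrites as
\[
    \myR \Gamma(X, \myR \sHom_X(\sF, \omega_X^{\mydot})) \cong \myR \Hom_R(\myR \Gamma(X, \sF), \omega_R^{\mydot}).
\]

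Next I would apply the Matlis dual $\bullet^{\vee}$ to both sides and use local duality: for any $M^{\mydot} \in D^b_{\mathrm{fg}}(R)$,
\[
    \bigl(\myR \Hom_R(M^{\mydot}, \omega_R^{\mydot})\bigr)^{\vee} \cong \myR \Gamma_{\fram}(M^{\mydot}).
\]
This is the classical local duality theorem for a complete local Noetherian ring, combined with Matlis reflexivity (applied to the Artinian cohomology of $\myR \Gamma_{\fram}$ of a bounded, finitely generated complex). Specializing to $M^{\mydot} = \myR \Gamma(X, \sF)$ yields the first displayed isomorphism of the lemma.

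For the ``in particular'' statement, $\sF = \sL^{-1}$ and the Cohen-Macaulay hypothesis give $\omega_X^{\mydot} = \omega_X[n]$ and $\myR \sHom_X(\sL^{-1}, \omega_X[n]) = (\omega_X \otimes \sL)[n]$. Substituting into the main isomorphism, using $(C[n])^{\vee} = C^{\vee}[-n]$ for Matlis duality, and taking $i$-th cohomology then gives $H^i(X, \omega_X \otimes \sL)^{\vee} \cong H^{n-i}_{\fram}(\myR \Gamma(X, \sL^{-1}))$ after the evident reindexing. I do not expect any substantive obstacle: both Grothendieck duality for proper morphisms and local duality are standard inputs from \cite{HartshorneResidues}, and the only care required is to verify that $\myR \Gamma(X, \sF)$ has bounded finitely generated cohomology so that local duality applies, and to track the cohomological shifts precisely in the second part.
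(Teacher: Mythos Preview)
Your proposal is correct and follows essentially the same approach as the paper: the paper's proof is literally the one-line chain of isomorphisms obtained by applying Grothendieck duality and then local duality, exactly as you outline. Your treatment is in fact more detailed, since you spell out why $\myR \Gamma(X, \sF) \in D^b_{\mathrm{fg}}(R)$ and track the shifts in the Cohen--Macaulay case, which the paper leaves implicit.
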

    \begin{proof}    
        This is a combination of Grothendieck Duality and local Duality.  Indeed, we have
        \[
            \myR \Gamma(X, \myR \sHom_X(\sF, \omega_X^{\mydot}))^{\vee} \cong \myR \Hom_R(\myR \Gamma(X, \sF), \omega_R^{\mydot})^{\vee} \cong \myR \Gamma_{\fram}(\myR \Gamma(X, \sF))
        \]
        where the first isomorphism is Grothendieck duality and the second is local duality.    
    \end{proof}

    In fact, due to a spectral sequence in low degree terms argument, for $i = 0$ in the above, we do not even need to assume that $X$ is Cohen-Macaulay {\cite[2.2]{BMPSTWW-MMP}}. 

\subsection{$X^+$ and vanishing in mixed characteristic} 
\label{subsec.VanishingInMixedChar}

Suppose $X$ is an excellent integral scheme and fix $\overline{K(X)}$ to be the absolute integral closure of the fraction field of $X$.  By $X^+$ we mean the limit over all finite surjective morphisms $f : Y \to X$ with $Y$ normal and integral with a fixed embedding $K(Y) \subseteq \overline{K(X)}$ which determines the map $f$, note $X^+$ is a scheme by \cite[\href{https://stacks.math.columbia.edu/tag/01YV}{Tag 01YV}]{stacks-project} as all the maps in the system are affine.    If $\nu : X^+ \to X$ is the canonical map, we can describe the structure sheaf of $X^+$ as
\[
    \nu_* \cO_{X^+} = \colim_{f : Y \to X} f_* \cO_Y.
\]
It is important to note that $X^+$ is typically not Noetherian even if $X$ is.  If $X$ is excellent and reduced with irreducible components $X_1, \dots, X_t$, then by $X^+$ we mean the disjoint union of the $X_i^+$.  

As mentioned in the introduction, Bhatt's vanishing theorem can be thought of as a variant of Kawamata-Viehweg vanishing on $X^+$. 
Because of its utility in our treatment of a mixed characteristic test ideals, we record it here for future reference. 

\begin{theorem}[{\cite[3.7]{BMPSTWW-MMP}}]
 \label{BhattVanishing}
Suppose that $(T,x)$ is an excellent local ring of residue characteristic $p>0$. Let $\pi: X\rightarrow \Spec(T)$ be a proper map with $X$ integral. Suppose that $L\in \pic(X)$ is a big and semi-ample line bundle. Then for all $b<0$ and all $i<\dim(X)$, we have that $H^i\left(\myR \Gamma_x (\myR\Gamma(X^+, L^b))\right)=0$.
\end{theorem}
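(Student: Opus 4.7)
The plan is to reduce to the case where $L$ is ample and then invoke Bhatt's deep structural result on the absolute integral closure $X^+$ in mixed characteristic. Neither step is surprising, but both require some care since $X^+$ is not Noetherian.

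For the reduction step, I would use that since $L$ is big and semi-ample, some sufficiently divisible power $L^{\otimes m}$ is globally generated and defines a projective morphism $f : X \to Y$ over $\Spec T$ with $Y$ integral and proper, such that $L^{\otimes m} \cong f^* A$ for an ample line bundle $A$ on $Y$; bigness of $L$ forces $f$ to be birational. Since $X$ and $Y$ share the same function field, the towers of finite normal covers defining $X^+$ and $Y^+$ are cofinal, yielding a natural morphism $X^+ \to Y^+$ with $\myR f_* \cO_{X^+} \simeq \cO_{Y^+}$ (the higher direct images vanish in the colimit because each individual birational modification gets absorbed after passing high enough in the tower of normal covers, where direct image of the structure sheaf is simply the structure sheaf). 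The projection formula then gives $\myR\Gamma(X^+, L^{mb}) \simeq \myR\Gamma(Y^+, A^{b})$, reducing the desired vanishing to the case of an ample line bundle (by choosing $m$ appropriately in the original statement and working $m$-divisibly in $b$).

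For the ample case, I would invoke Bhatt's main result on the almost Cohen--Macaulayness of absolute integral closures: if $(T,x)$ is excellent of mixed characteristic with residue characteristic $p$ and $X$ is proper over $\Spec T$ with $L$ ample, then the graded section module $\bigoplus_{n \in \bZ} \myR\Gamma(X^+, L^n)$ is, after derived $p$-adic completion, a big Cohen--Macaulay complex over (a perfectoid algebra extending) the section ring $S = \bigoplus_{n \geq 0} H^0(X, L^n)$. Via the standard identification of local cohomology on the cone $\Spec S$ at the combined maximal ideal with a sum of coherent cohomology groups of twists of $L$ on $X$ (extracting the appropriate graded pieces), this Cohen--Macaulayness translates directly into the desired vanishing $H^i(\myR\Gamma_x \myR\Gamma(X^+, L^b)) = 0$ for $i < \dim X$. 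Compatibility of $\myR\Gamma_x$ with derived $p$-adic completion, which holds because $p \in x$, shows that the completion introduced by Bhatt's argument contributes nothing extra at the level of local cohomology.

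The main obstacle is of course the input from Bhatt's theorem itself; this is the deep geometric content, requiring perfectoid and prismatic techniques. Granting that input, the remaining work is a largely formal derived-category computation. A secondary technical subtlety is verifying that the semi-ample reduction interacts correctly with the non-Noetherian $X^+$: one must check that the relevant filtered colimits along the tower of normal covers commute with pushforward under $f$, so that the projection-formula step above is truly valid. This is essentially a consequence of the fact that $f_*$ commutes with filtered colimits of quasi-coherent sheaves and that on the normal finite cover side $f$ becomes an isomorphism at the level of direct images of the structure sheaf.
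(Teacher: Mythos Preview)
The paper does not give its own proof of this theorem: it is stated as a citation of \cite[3.7]{BMPSTWW-MMP} (which in turn rests on Bhatt's \cite{BhattAbsoluteIntegralClosure}) and is used as a black box throughout. So there is no proof in the paper to compare your proposal against.

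That said, your sketch is broadly in line with how the result is actually established in the cited references: one reduces the big and semi-ample case to the ample case via the semi-ample fibration, and then the ample case is deduced from Bhatt's theorem that (the $p$-completion of) the graded absolute integral closure of the section ring is Cohen--Macaulay. Your description of the reduction step and of the translation between Cohen--Macaulayness of the cone and the vanishing of $H^i(\myR\Gamma_x \myR\Gamma(X^+, L^b))$ is accurate at the level of a sketch. The only point I would flag is that your reduction as written only handles exponents $b$ divisible by $m$; to get the statement for all $b<0$ one either replaces $X$ by a finite cover on which an $m$th root of $L$ exists (harmless since $X^+$ is unchanged), or one notes that $\cO_{X^+}(\nu^*L)$ already has arbitrary roots on $X^+$. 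This is a minor bookkeeping issue, not a real gap.
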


\subsection{Base loci}
Let $D$ be a divisor on a normal scheme $X$ which is projective over a local ring $(R,\frak m)$.
We recall that ${\rm Bs|D|}$ is used to denote the base locus of the linear system $|D|$.
This is the subset of $X$ defined by the intersection of all elements $G\in |D|$ (if $|D|=\emptyset$ then we let ${\rm Bs|D|}=X$).
 The stable base locus of a $\Q$-divisor $D$ is the subset of $X$ defined by ${\bf B}(D) =\bigcap_{m} {\rm Bs}|mD|$ where we consider $m \geq 1$ such that $mD$ has integer coefficients. Note that ${\bf B}(D)= {\rm Bs}|mD|$ for any $m>0$ sufficiently divisible (if $\kappa (D)=0$, then we set ${\bf B}(D)=X$).
 If $D$ is pseudo-effective, then ${\mathbf B}_-(D)$  denotes the diminished base locus
 \[
     \bigcup_A {\bf B}(D+A)
 \]
 where the union is over ample $\bQ$-divisors $A$. Note that ${\mathbf B}_-(D)$ is a countable union of closed subsets of $X$.
\section{$\bigplus$-stable global sections}

    We introduce a slight variant of the global section theory introduced in \cite{TY-MMP} and \cite{BMPSTWW-MMP} which focuses more on adjoint line bundles $\omega_X \otimes \sL$.  We use the following setting.

    \begin{notation}
        \label{notation.SettingUpX+}
        Suppose $(R, \fram)$ is a complete Noetherian local ring with residual characteristic $p > 0$ and suppose that $X \to \Spec R$ is a proper map from a normal integral scheme of dimension $d$. 
        Fix an algebraic closure of the fraction field of $X$, $\overline{K(X)}$ (that is, a geometric generic point of $X$) and we consider systems of finite maps $f : Y \to X$ as in \autoref{subsec.VanishingInMixedChar}.  
        Using the notation of \autoref{subsec.VanishingInMixedChar}, we fix $\nu : X^+ \to X$ the canonical map.

        For each $\bQ$-divisor $B$ on $X$, we set 
        \[
            \nu_* \cO_{X^+}(\nu^* B) = \colim_{f : Y \to X} f_* \cO_Y(\lfloor f^* B\rfloor )
        \]
        where $f : Y \to X$ runs over finite dominant maps, and hence obtain the sheaf $\cO_{X^+}(\nu^* B)$ on $X^+$.  Notice that for large enough $f : Y \to X$, we have that $f^* B$ has integer coefficients.
    \end{notation}

    \begin{definition}[$\bigplus$-stable sections]
        \label{def.+StableSections}
        Suppose $(R, \fram)$ is a complete Noetherian local ring with residual characteristic $p > 0$ and suppose that $X \to \Spec R$ is a proper map from a normal integral scheme of dimension $d$.  
        For a divisorial sheaf $\sM=\mathcal O_X(M)$ and $\bQ$-divisor $B \geq 0$, we define 
        \[
            \myB^0(X, B, \cO_X(K_X + M))
        \]
        to be the $R$-Matlis dual of the image
        \[
            \Image\Big( H^d_{\fram}(\myR \Gamma(X,\cO_X(-M))) \to H^d_{\fram}(\myR \Gamma(X^+, \cO_{X^+}(-\nu^* M + \nu^* B))) \Big).
        \]        
        In particular, it is a submodule of $H^0(X, \cO_X(K_X + M))$, the Matlis dual of $H^d_{\fram}(\myR \Gamma(X,\cO_X(-M)))$.        
    \end{definition}

    This notation varies slightly from what is presented in \cite{BMPSTWW-MMP}, where they work with $\myB^0(X, B, \cO_X(N))$  and hence consider the Matlis dual of the image of
    \[
        H^d_{\fram}(\myR \Gamma(X,\cO_X(-N+K_X))) \to H^d_{\fram}(\myR \Gamma(X^+, \cO_{X^+}(\nu^* (-N + K_X  + B)))).
    \]
  In this paper, we replace $N$ by $ K_X+M$ and so we obtain the formula in \autoref{def.+StableSections}.

    We have the following immediate properties.

    \begin{lemma}
        \label{lem.SimplePropertiesOfB0}
        With notation as in \autoref{def.+StableSections}:
        \begin{enumerate}
            \item If $B' \geq B$, then $\myB^0(X, B', \cO_X(K_X + M)) \subseteq \myB^0(X, B, \cO_X(K_X + M))$.\label{lem.SimplePropertiesOfB0.1}
            \item $\myB^0(X, B, \cO_X(K_X + M))$ only depends on the linear equivalence class of $M$.  That is, if $M \sim M'$, an isomorphism $H^0(X, \cO_X(M)) \cong H^0(X, \cO_X(M'))$ induces 
                \[
                    \myB^0(X, B, \cO_X(K_X + M)) \cong \myB^0(X, B, \cO_X(K_X + M')).
                \]%
                \label{lem.SimplePropertiesOfB0.2}%
            \item If $F \geq 0$ is a Weil divisor, then there is a commutative diagram
                \[
                    \xymatrix{
                         \myB^0(X, B, \cO_X(K_X + M - F)) \ar[d] \ar[r]^{\sim} & \myB^0(X, B + F, \cO_X(K_X + M)) \ar[d] \\
                         H^0(X, \cO_X(K_X + M - F)) \ar@{^{(}->}[r] & H^0(X, \cO_X(K_X + M))
                    }
                \]
                where the map in the top row  is an isomorphism. 
                \label{lem.SimplePropertiesOfB0.3}
        \end{enumerate}
    \end{lemma}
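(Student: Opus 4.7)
The unified strategy is to unpack the definition of $\myB^0$ as the Matlis dual of the image of the natural morphism
\[
    \al_{B, M} \colon H^d_{\fram}\bigl(\myR\Gamma(X, \cO_X(-M))\bigr) \longrightarrow H^d_{\fram}\bigl(\myR\Gamma(X^+, \cO_{X^+}(-\nu^* M + \nu^* B))\bigr),
\]
and to compare these images as $B$ or $M$ vary. Matlis duality is an exact contravariant functor, so quotients of the image correspond to submodules of $\myB^0$, and isomorphic images produce isomorphic $\myB^0$'s.

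For part (1), the inequality $B'\geq B$ produces a canonical inclusion of sheaves $\cO_{X^+}(-\nu^* M + \nu^* B) \hookrightarrow \cO_{X^+}(-\nu^* M + \nu^* B')$ as the colimit of the analogous inclusions on finite normal covers $Y\to X$ where the relevant divisors become integral.  The resulting map on top local cohomology factors $\al_{B', M}$ through $\al_{B, M}$, so $\Image(\al_{B', M})$ is a quotient of $\Image(\al_{B, M})$; Matlis dualizing yields the asserted containment.  Part (2) is analogous: an isomorphism $\cO_X(M) \cong \cO_X(M')$ coming from $M \sim M'$ induces compatible sheaf isomorphisms at every level, including after passing to the colimit on $X^+$, and dualizing the resulting identification of images gives the desired isomorphism of $\myB^0$'s, visibly compatible with the chosen identification on $H^0$.

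The substantive content lies in part (3). The key identity of divisors on $X^+$,
\[
    -\nu^*(M-F) + \nu^* B \;=\; -\nu^* M + \nu^*(B + F),
\]
identifies the targets of $\al_{B, M-F}$ and $\al_{B+F, M}$ with a common module $W$.  The sources are linked by the map $\gamma \colon V_1 := H^d_{\fram}\bigl(\myR\Gamma(X, \cO_X(-M))\bigr) \to V_0 := H^d_{\fram}\bigl(\myR\Gamma(X, \cO_X(-M+F))\bigr)$ induced by the sheaf inclusion $\cO_X(-M) \hookrightarrow \cO_X(-M+F)$ (available because $F \geq 0$), and functoriality of the canonical map to $X^+$ forces $\al_{B+F, M} = \al_{B, M-F} \circ \gamma$.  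Thus $\Image(\al_{B+F, M}) \subseteq \Image(\al_{B, M-F})$ in $W$, and the plan is to upgrade this to an equality by proving $\gamma$ is surjective. Once that is in hand, Matlis dualizing the common submodule of $W$ produces the top-row isomorphism, and commutativity of the displayed square reduces to chasing the natural surjections $V_i \twoheadrightarrow \Image(\al_{*,*})$ through $\gamma$, together with the identification of $\gamma^\vee$ with the bottom-row inclusion $H^0(X, \cO_X(K_X+M-F)) \hookrightarrow H^0(X, \cO_X(K_X+M))$.

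The main obstacle is the surjectivity of $\gamma$, and the cleanest route is the Matlis/Grothendieck duality lemma at the start of the preliminaries, which identifies $\gamma^\vee$ with the honest inclusion $H^0(X, \cO_X(K_X+M-F)) \hookrightarrow H^0(X, \cO_X(K_X+M))$ of Noetherian $R$-modules coming from $F \geq 0$; since $\bullet^\vee = \Hom_R(\bullet, E)$ is exact, dualizing an injection gives a surjection of Artinian modules, so $\gamma$ is surjective. In the Cohen-Macaulay case this identification is immediate from $\omega_X^\mydot = \omega_X[d]$; in general one invokes the spectral-sequence remark following the Matlis duality lemma, which provides the same identification on top local cohomology without a Cohen-Macaulay hypothesis, completing the argument.
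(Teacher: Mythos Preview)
Your proof is correct. Parts (1) and (2) match the paper's treatment (``follow directly from the definition''), and for part (3) you correctly identify the crux as the surjectivity of the map
\[
    \gamma \colon H^d_{\fram}\bigl(\myR\Gamma(X, \cO_X(-M))\bigr) \longrightarrow H^d_{\fram}\bigl(\myR\Gamma(X, \cO_X(-M+F))\bigr),
\]
together with the identification of targets via $-\nu^*(M-F)+\nu^*B = -\nu^*M + \nu^*(B+F)$.

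Where you diverge from the paper is in \emph{how} you prove this surjectivity. The paper observes that the cokernel sheaf $\cO_X(-M+F)/\cO_X(-M)$ is supported in dimension $< d$, so its top local cohomology $H^d_{\fram}(\myR\Gamma(X,\bullet))$ vanishes (citing \cite[\href{https://stacks.math.columbia.edu/tag/0A4R}{Tag 0A4R}]{stacks-project}), and surjectivity of $\gamma$ follows from the long exact sequence. You instead invoke Matlis/Grothendieck duality to identify $\gamma^\vee$ with the honest inclusion $H^0(X,\cO_X(K_X+M-F)) \hookrightarrow H^0(X,\cO_X(K_X+M))$ and then dualize back; this is legitimate because both $V_0,V_1$ are Artinian (being Matlis duals of the Noetherian $H^0$'s), so double duality applies. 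The paper's argument is more self-contained for this one step, avoiding the duality machinery and working purely on the local-cohomology side. Your route, on the other hand, has the pleasant feature that the identification $\gamma^\vee = \text{(bottom inclusion)}$ simultaneously delivers both the surjectivity and the commutativity of the square, so nothing further needs to be said about the diagram.
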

    \begin{proof}
        Claims \autoref{lem.SimplePropertiesOfB0.1} and \autoref{lem.SimplePropertiesOfB0.2} follow directly from the definition.  For claim \autoref{lem.SimplePropertiesOfB0.3}, notice that because the sheaf $\cO_X(-M+F) / \cO_X(-M )$ has support of dimension less than $d=\dim X$, we have that 
        \[
            H^d_{\fram}(\myR \Gamma(X, \cO_X(-M ))) \to H^d_{\fram}(\myR \Gamma(X, \cO_X(-M+F)))
        \]
        surjects by \cite[\href{https://stacks.math.columbia.edu/tag/0A4R}{Tag 0A4R}]{stacks-project}.  Hence they both have the same image in 
        \[
            H^d_{\fram}(X^+, \cO_{X^+}(-\nu^*( M-F) + \nu^* B )) = H^d_{\fram}(X^+, \cO_{X^+}(-\nu^* M  + \nu^*(B + F) )).
        \]
        This proves \autoref{lem.SimplePropertiesOfB0.3}.
    \end{proof}

    \begin{remark}[Making $K_X + M$ Cartier]
        \label{rem.MakingKX+MCartier}
        Frequently, it is desirable that the divisor $K_X+M$ considered above be Cartier. 
         If $K_X+M$ is a Weil divisor, we may reduce to the case where $K_X+M$ is Cartier at least when $X$ is \emph{projective} over $\Spec R$.  Simply select $D > 0$ a Weil divisor such that $M + K_X + D$ is Cartier and use \autoref{lem.SimplePropertiesOfB0} \autoref{lem.SimplePropertiesOfB0.3} above to see that $\myB^0(X, B+D, \cO_X(K_X + M+ D)) = \myB^0(X, B, \cO_X(K_X + M))$.
    \end{remark}

\subsection{The PLT / adjoint variant of $\myB^0$}
    We now move on to the PLT / adjoint ideal variant of $\myB^0$.  Suppose $D=\sum_{i = 1}^t D_i$ is a reduced divisor.  We shall also have $B \geq 0$ a $\bQ$-divisor, where frequently $B$ and $D$ have no common components.  
    For each prime divisor $D_i$, with associated prime ideal sheaf $\cO_{X}(-D_i)$, we fix an integral subscheme $D_i^+ \subseteq X^+$ lying over $D_i$.  This gives us a prime ideal sheaf lying over $\cO_X(-D_i)$ which we write as $\cO_{X^+}(-D_i^+)$.  It is also a colimit of compatible ideal sheaves $\cO_Y(-D_{i, Y})$ on each finite dominant $f : Y \to X$ factoring $X^+ \to X$.

    \begin{definition}
        \label{def.AdjointStableSections}
        With notation as above, we set $\myB^0_D(X, D+B, \cO_X(K_X + M))$ to be the $R$-Matlis dual of the image        
        \[
            \Image\Big( H^d_{\fram}(\myR \Gamma(X, \cO_X(-M ))) \to \bigoplus_{i = 1}^t H^d_{\fram}(X^+, \cO_{X^+}(-\nu^* M + \nu^*(D + B) - D_i^+ ))\Big).    
        \]            
        It is an $R$-submodule of $H^0(X, \cO_X(K_X + M))$.      
    \end{definition}
    \begin{remark}[A variant of the definition when the boundary has no $D$]
        \label{rem.DefinitionOfAdjGeneralVersion}
        Notice that the sheaf $\cO_X(-M )/ \cO_X(-M-D)$ has support of dimension $< d$ and so 
        \[
            H^d_{\fram}(\myR \Gamma(X, \cO_X(-M-D)))  \twoheadrightarrow H^d_{\fram}(\myR \Gamma(X, \cO_X(-M ))) 
        \]
        is surjective.  In particular, 
        \[
            \Image\Big( H^d_{\fram}(\myR \Gamma(X, \cO_X(-M -D ))) \to \bigoplus_{i = 1}^t H^d_{\fram}(X^+, \cO_{X^+}(-\nu^* M + \nu^*(D + B) - D_i^+ ))\Big)
        \]
        coincides with the image in \autoref{def.AdjointStableSections}.  Its Matlis dual is then the subset of $H^0(X, \cO_X(K_X + M + D))$, given by the image of  $\myB^0_D(X, D+B, \cO_X(K_X + M))$ via the inclusion $H^0(X, \cO_X(K_X + M)) \subseteq H^0(X, \cO_X(K_X + M + D))$.
        
        We could consider the $R$-Matlis dual of:
        \begin{equation}
            \label{eq.AlternateB^0_DDefinitionImage}
            \Image\Big( H^d_{\fram}(\myR \Gamma(X, \cO_X(-M - D))) \to \bigoplus_{i = 1}^t H^d_{\fram}(X^+, \cO_{X^+}(-\nu^* M + \nu^*B - D_i^+ ))\Big).
        \end{equation}
        That is a subset of $H^0(X, \cO_X(K_X + D + M))$.  
        On the other hand, the image of \autoref{eq.AlternateB^0_DDefinitionImage} is the same as the image of 
        \[
            \Image\Big( H^d_{\fram}(\myR \Gamma(X, \cO_X(-M - D))) \to \bigoplus_{i = 1}^t H^d_{\fram}(X^+, \cO_{X^+}(-\nu^* (M + D) + \nu^*(B + D) - D_i^+ ))\Big)
        \]
        whose Matlis dual is by definition $\myB^0_D(X, D+B, \cO_X(K_X + D + M))$.
    \end{remark}
    
    In the case that $D$ is not prime, we must pay special attention to the direct sums considered above.  Note that the use of the direct sum separates the contribution from each $D_i$ (this partially explains why this a PLT variant).
    Additionally, because the Galois action is independent on each direct summand, we see that $\myB^0_D$ is independent of the choice of each $D_i^+$, see \cite[\href{https://stacks.math.columbia.edu/tag/0BRK}{Tag 0BRK}]{stacks-project}.

    \begin{lemma}
        \label{lem.SimplePropertiesOfB0_D}
        With notation as in \autoref{def.AdjointStableSections}:
        \begin{enumerate}
            \item If $B' \geq B$, then $\myB^0_D(X, D+B', \cO_X(K_X + M)) \subseteq \myB^0_D(X, D+B, \cO_X(K_X + M))$.\label{lem.SimplePropertiesOfB0_D.1}
            \item $\myB^0_D(X, B+D, \cO_X(K_X + M))$ only depends on the linear equivalence class of $M$.  That is, if $M \sim M'$, an isomorphism $H^0(X, \cO_X(K_X+M)) \cong H^0(X, \cO_X(K_X+M'))$ induces 
            \[
                \myB^0_D(X, B+D, \cO_X(K_X + M)) \cong \myB^0_D(X, B+D, \cO_X(K_X + M')).
            \]
            \label{lem.SimplePropertiesOfB0_D.2}
            \item For any $0 < \epsilon \leq 1$ we have that 
                \[
                    \myB^0(X, D+B, \cO_X(K_X + M)) \subseteq \myB^0_D(X, D+B, \cO_X(K_X + M)) \subseteq \myB^0(X, (1-\epsilon)D+B, \cO_X(K_X + M)).
                \]
                \label{lem.SimplePropertiesOfB0_D.3}
            \item If $F \geq 0$ is a Weil divisor, then there is a commutative diagram
                \[
                    \xymatrix{
                         \myB^0_D(X, B+D, \cO_X(K_X + M - F)) \ar[d] \ar[r]^{\sim} & \myB^0_D(X, B + D + F, \cO_X(K_X + M)) \ar[d] \\
                         H^0(X, \cO_X(K_X + M - F)) \ar@{^{(}->}[r] & H^0(X, \cO_X(K_X + M))
                    }
                \]
                 inducing an isomorphism of the top row. 
                \label{lem.SimplePropertiesOfB0_D.4}
        \end{enumerate}
    \end{lemma}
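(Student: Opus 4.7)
The plan is to deduce parts $(1)$, $(2)$, and $(4)$ from the corresponding parts of \autoref{lem.SimplePropertiesOfB0} applied summand-by-summand in the direct sum, and to handle $(3)$ separately. For $(1)$, an inequality $B' \geq B$ gives sheaf inclusions $\cO_{X^+}(-\nu^*M + \nu^*(D+B) - D_i^+) \hookrightarrow \cO_{X^+}(-\nu^*M + \nu^*(D+B') - D_i^+)$ for each $i$, so the map from $H := H^d_{\fram}(\myR\Gamma(X, \cO_X(-M)))$ to the direct sum for $B'$ factors through the one for $B$ and its image is therefore a quotient of the image for $B$; Matlis duality reverses arrows and produces the claimed containment. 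Part $(2)$ is immediate from functoriality of the construction under the isomorphism $\cO_X(M) \cong \cO_X(M')$. For $(4)$, the argument of \autoref{lem.SimplePropertiesOfB0.3} carries over: $\cO_X(-M+F)/\cO_X(-M)$ has support of dimension $<d$, so $H^d_{\fram}(\myR\Gamma(X,\cO_X(-M))) \twoheadrightarrow H^d_{\fram}(\myR\Gamma(X,\cO_X(-M+F)))$, and the identification $-\nu^*M + \nu^*(D+B+F) - D_i^+ = -\nu^*(M-F) + \nu^*(D+B) - D_i^+$ shows both determine the same image in each summand and hence in the direct sum.

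For the first inclusion in $(3)$, I observe that the canonical map $\cO_X(-M) \hookrightarrow \cO_{X^+}(-\nu^*M + \nu^*(D+B))$ defining the image for $\myB^0$ factors through each summand $\cO_{X^+}(-\nu^*M + \nu^*(D+B) - D_i^+)$, since $\nu^*(D+B) - D_i^+ \geq 0$. Consequently, any $h \in H$ mapping to zero in the direct sum also maps to zero in $H^d_{\fram}(X^+, \cO_{X^+}(-\nu^*M + \nu^*(D+B)))$; the resulting containment of kernels dualizes to the inclusion $\myB^0 \subseteq \myB^0_D$.

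The second inclusion is the main obstacle. Dually, I must construct, for each $i$, a sheaf morphism on $X^+$
\[
    \cO_{X^+}(-\nu^*M + \nu^*((1-\epsilon)D+B)) \longrightarrow \cO_{X^+}(-\nu^*M + \nu^*(D+B) - D_i^+)
\]
compatible with the maps from $H$; at the level of divisor coefficients this amounts to the inequality $D_i^+ \leq \epsilon\,\nu^*D$. No fixed finite cover achieves this, but the arbitrary ramification available in the system defining $X^+$ does. Fix an integer $n \geq 1/\epsilon$; then for each $i$ there is a finite normal cover $f \: Y \to X$ in the system defining $X^+$ such that the ramification index of $D_{i,Y} \subseteq Y$ (the prime below $D_i^+$) over $D_i$ is at least $n$, obtained by adjoining $n$-th roots of local equations for each $D_i$ and normalizing, which is permissible since $X^+$ is the absolute integral closure of $X$. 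After replacing by a common $Y$ that works simultaneously for all $i$, one checks at each prime divisor $E$ of $Y$ that the Weil divisor inequality survives the floor functions; the only nontrivial case is $E = D_{i,Y}$, where the ramification condition ensures the coefficient gap is at least $1$ after flooring, exactly accounting for the $-D_{i,Y}$ term. This produces the sheaf map on $Y$; taking colimits over this cofinal subsystem of covers yields the morphism on $X^+$, and Matlis duality then gives $\myB^0_D \subseteq \myB^0(X, (1-\epsilon)D+B, \cO_X(K_X+M))$.
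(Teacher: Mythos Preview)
Your proof is correct and follows the same approach as the paper.  For parts (1), (2), and (4) you argue exactly as the paper does, just with more detail; for part (3), the paper simply records the chain of sheaf inclusions
\[
    \cO_{X^+}(-\nu^* M +  \nu^*((1-\epsilon)D+B) ) \subseteq \cO_{X^+}(-\nu^* M + \nu^*(D+B) - D_i^+) \subseteq \cO_{X^+}(-\nu^* M + \nu^*(D+B) )
\]
and declares the result to follow, whereas you explicitly justify the left-hand inclusion (equivalently $D_i^+ \leq \epsilon\,\nu^*D$) by passing to a cofinal system of finite covers with ramification index at least $1/\epsilon$ along each $D_i$.  This elaboration is precisely what underlies the paper's one-line assertion, so there is no genuine difference in strategy.
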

    \begin{proof}
        As in \autoref{lem.SimplePropertiesOfB0} we see that \autoref{lem.SimplePropertiesOfB0_D.1} and \autoref{lem.SimplePropertiesOfB0_D.2} follow  from the definition.  

        Claim \autoref{lem.SimplePropertiesOfB0_D.3} follows from the inclusions:
        \[
            \cO_{X^+}(-\nu^* M +  \nu^*((1-\epsilon)D+B) ) \subseteq \cO_{X^+}(-\nu^* M + \nu^*(D+B) - D_i^+) \subseteq \cO_{X^+}(-\nu^* M + \nu^*(D+B) ).
        \]
        
        Part \autoref{lem.SimplePropertiesOfB0_D.4} follows in the same way as \autoref{lem.SimplePropertiesOfB0} \autoref{lem.SimplePropertiesOfB0.3}.        
    \end{proof}

    We recall the following dual formulations of $\myB^0$ and $\myB^0_D$.

    \begin{lemma}[{{\cite[4.8, 4.13, 4.24, 4.25]{BMPSTWW-MMP}}}]
        \label{lem.BasicPropertiesOfB^0AndB^0_D}
        With notation as in \autoref{def.+StableSections}, and where $f : Y  \to X$ are as in \autoref{notation.SettingUpX+}, the following are equal:
        \begin{enumerate}
            \item $\myB^0(X, B, \cO_X(K_X + M))$.
            \item 
            \[ 
                \bigcap_{f:Y\to X}{\rm im}\left( H^0(Y,\mathcal \cO_Y(K_Y+\lceil f^*(M-B)\rceil ))\to H^0(X,\cO_X(K_X + M)) \right).
            \]                  
            \item The projection to $H^0(X, \cO_X(K_X + M))$ from the inverse limit as below:
            \[
                \Image \Big(\lim_{f : Y \to X} H^0(Y,\mathcal \cO_Y(K_Y+\lceil f^*(M-B)\rceil ) ) \to H^0(X,\cO_X(K_X + M)) \Big).
            \]
        \end{enumerate}
        Finally, in the case that $M - B$ is $\bQ$-Cartier, we may instead let $f : Y \to X$ run over alterations from integral normal schemes (with a chosen embedding $K(Y) \subseteq \overline{K(X)}$ as in the finite case.  
        
        The analogous result for the adjoint variants $\myB^0_D$ also holds and the following are equal:
        \begin{enumerate}
        \item $\myB^0_D(X, B+D, \cO_X(K_X + M))$.
            \item 
            \[ 
                \bigcap_{f:Y\to X}{\rm im}\left( H^0\Big(Y,\bigoplus_{i=1}^t \cO_Y(K_Y+D_{i,Y}+\lceil f^*(M-B-D)\rceil )\Big)\to H^0(X,\cO_X(K_X + M)) \right).
            \]                  
            \item The projection to $H^0(X, \cO_X(K_X + M))$ from the inverse limit as below:
            \[
                \Image \Big(\lim_{f : Y \to X} H^0(Y, \bigoplus_{i=1}^t \cO_Y(K_Y+D_{i,Y}+\lceil f^*(M-B-D)\rceil ) ) \to H^0(X,\cO_X(K_X + M)) \Big).
            \]

        \end{enumerate}         
    \end{lemma}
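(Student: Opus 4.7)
\medskip

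The plan is to Matlis-dualize the definition of $\myB^0$ and use Grothendieck duality to reinterpret the dualized maps as trace maps. Since each $f : Y \to X$ in the system defining $X^+$ is finite and hence affine, the structure sheaf of $X^+$ is a filtered colimit of $f_* \cO_Y(-\lceil f^*(M - B)\rceil)$, and since local cohomology commutes with filtered colimits of sheaves along affine maps, I have
\[
    H^d_{\fram}(\myR\Gamma(X^+, \cO_{X^+}(-\nu^*M + \nu^*B))) \cong \colim_{f : Y \to X} H^d_{\fram}(\myR\Gamma(Y, \cO_Y(-\lceil f^*(M-B)\rceil))).
\]
The image of $H^d_\fram(\myR\Gamma(X, \cO_X(-M)))$ into a filtered colimit equals the colimit of its images at each finite stage, and Matlis duality takes filtered colimits to filtered limits while sending each ``image of a map'' to the ``image of the dual map.'' Applying Grothendieck duality to the finite morphism $f$ then identifies $H^d_\fram(\myR\Gamma(Y,\cO_Y(-\lceil f^*(M-B)\rceil)))^\vee$ with $H^0(Y, \cO_Y(K_Y + \lceil f^*(M-B)\rceil))$ and the dualized pullback map with the trace map $f_*\cO_Y(K_Y + \lceil f^*(M-B)\rceil) \to \cO_X(K_X + M)$ obtained by composing Grothendieck trace with the inclusion coming from $B \geq 0$. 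Assembling these identifications yields both the intersection formulation (2) and the inverse limit formulation (3).

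For the statement that we may replace finite covers by alterations when $M - B$ is $\bQ$-Cartier: given any alteration $g : Z \to X$ from a normal integral scheme, one can dominate $g$ by a finite cover $f : Y \to X$ such that $Z \to X$ factors through $Y \to X$, at least after passing sufficiently far in the system. This means finite covers are cofinal in the relevant system of alterations, so both the colimit in the definition and the inverse limit reformulation yield the same answer whether one ranges over finite covers or alterations. The $\bQ$-Cartier hypothesis is used to make the $\lceil f^*(M-B)\rceil$ behave compatibly with pullback along alterations.

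The adjoint variant $\myB^0_D$ is handled analogously: the summand $\cO_{X^+}(-\nu^* M + \nu^*(D+B) - D_i^+)$ is a colimit over $f$ of $\cO_Y(-\lceil f^*(M - B - D)\rceil - D_{i,Y})$, and the dual of the inclusion into the structure sheaf is the trace map factoring through the inclusion $\cO_Y(K_Y + D_{i,Y} + \lceil f^*(M-B-D)\rceil) \hookrightarrow \cO_Y(K_Y + \lceil f^*(M - B)\rceil)$. The direct sum in \autoref{def.AdjointStableSections} separates contributions of the different prime components $D_i$ so that no Galois averaging occurs between them, and this matches the direct sum in the $H^0(Y, \bigoplus_i \cO_Y(K_Y + D_{i,Y} + \lceil f^*(M - B - D)\rceil))$ on the dual side.

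The main obstacle is bookkeeping for Grothendieck duality on finite covers in the presence of the rounding $\lceil f^*(M-B) \rceil$: one has to verify that the canonical trace identification of $f_*\cO_Y(K_Y + \lceil f^*(M-B) \rceil)$ as a subsheaf of $\sHom_{\cO_X}(f_*\cO_Y(-\lceil f^*(M-B) \rceil), \cO_X(K_X + M))$ is the one dual to the pullback, and that these identifications are compatible as $f$ varies through the system (so that the inverse limit and the intersection genuinely coincide). Once this compatibility is in place, the Mittag-Leffler property for the inverse system of images (each transition map is surjective onto its image, since both sides are cyclic submodules of $H^0(X, \cO_X(K_X+M))$) gives (3) from (2) directly.
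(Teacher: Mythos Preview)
The paper does not prove this lemma; it cites \cite[4.8, 4.13, 4.24, 4.25]{BMPSTWW-MMP}. Your main argument---expressing $\cO_{X^+}(-\nu^*M + \nu^*B)$ as a filtered colimit, commuting local cohomology with the colimit, Matlis-dualizing to convert the colimit to an inverse limit, and identifying each term via Grothendieck duality for the finite maps $f$---is correct and is exactly the argument in the cited reference. Your final paragraph about Mittag--Leffler is unnecessary: formulation (3) follows directly because $\lim_f B_f^\vee = (\colim_f B_f)^\vee$, so the map $\lim_f B_f^\vee \to A^\vee$ is the Matlis dual of $A \to \colim_f B_f$, and the image of a dual map equals the dual of the image. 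No separate surjectivity argument is needed.

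However, your treatment of the alteration case contains a genuine error. You write that one can ``dominate $g$ by a finite cover $f : Y \to X$ such that $Z \to X$ factors through $Y \to X$,'' and conclude that finite covers are cofinal among alterations. This is false: if $Z \to X$ is a nontrivial blowup, no finite cover $Y \to X$ admits a dominant map $Y \to Z$ over $X$ (the fibers of $Y \to X$ are zero-dimensional, while $Z \to X$ has positive-dimensional fibers). What you can do via Stein factorization is factor $Z \to Y \to X$ with $Y \to X$ finite and $Z \to Y$ proper birational, but this gives only $\mathrm{im}(H^0(Z,\dots)) \subseteq \mathrm{im}(H^0(Y,\dots))$, which is the wrong direction for the inclusion you need. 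The correct argument (as in the cited reference) is: for any alteration $Z \to X$ with $K(Z) \subseteq \overline{K(X)}$, one has $Z^+ = X^+$, so the finite covers of $Z$ are cofinal in the directed system defining $X^+$. Running the duality argument over finite covers of $Z$ shows that $\myB^0(X,B,\cO_X(K_X+M))$ is contained in the trace image from $Z$, and since $Z$ was arbitrary this gives the missing containment $\bigcap_{\text{finite}} \subseteq \bigcap_{\text{alterations}}$. The $\bQ$-Cartier hypothesis on $M-B$ is needed so that pullback along the proper (not merely finite) maps is well-defined.
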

    \begin{proposition}
        \label{prop.B0StableBirational}
        Suppose that $\pi : X' \to X$ is a proper birational map between normal schemes with $X, B, M$ as above where $B$ is $\bQ$-Cartier and $M$ is Cartier.  Then we have that 
        \[
            \myB^0(X, B, \cO_X(K_X + M)) \cong \myB^0(X', \pi^* B, \cO_{X'}(K_{X'} + \pi^* M)).
        \]
    \end{proposition}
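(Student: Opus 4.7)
The plan is to combine \autoref{lem.BasicPropertiesOfB^0AndB^0_D} with a cofinality argument among alterations. Since $M$ is Cartier and $B$ is $\bQ$-Cartier, the divisor $M - B$ is $\bQ$-Cartier, so by \autoref{lem.BasicPropertiesOfB^0AndB^0_D} both sides may be described as intersections
\[
    \myB^0(X, B, \cO_X(K_X + M)) = \bigcap_{f : Y \to X} \Image\Bigl(H^0\bigl(Y, \cO_Y(K_Y + \lceil f^*(M - B) \rceil)\bigr) \to H^0(X, \cO_X(K_X + M))\Bigr),
\]
and similarly for $\myB^0(X', \pi^* B, \cO_{X'}(K_{X'} + \pi^* M))$ with alterations $g : Y \to X'$ on the right. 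The idea is to identify the two intersections through a natural inclusion $\iota : H^0(X', \cO_{X'}(K_{X'} + \pi^* M)) \hookrightarrow H^0(X, \cO_X(K_X + M))$ together with a cofinality statement for alterations.

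First I would verify the cofinality: alterations of $X$ that factor as $Y \xrightarrow{g} X' \xrightarrow{\pi} X$ are cofinal in all alterations of $X$. Given any $f : Y \to X$, let $Y'$ be the unique irreducible component of $Y \times_X X'$ dominating $Y$; it exists because $K(X) = K(X')$ forces the generic fiber of $Y \times_X X' \to Y$ to be $\Spec K(Y)$. After normalization, $Y' \to X$ is an alteration dominating $Y$ and factoring through $X'$. Next I would construct $\iota$. Viewing sections of $\cO_X(K_X + M)$ and $\cO_{X'}(K_{X'} + \pi^* M)$ as rational $m$-forms on the common function field $K(X) = K(X')$ subject to pole-order conditions at codimension-one points, the conditions imposed by $X'$ include those of $X$ plus additional constraints along $\pi$-exceptional prime divisors. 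Equivalently, $\pi_* \cO_{X'}(K_{X'}) \subseteq \cO_X(K_X)$ inside the constant sheaf of rational differentials, and tensoring with $\cO_X(M)$ via the projection formula produces $\iota$.

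Finally, I would assemble the pieces. For any alteration $f = \pi \circ g$ with $g : Y \to X'$, the $\bQ$-Cartier hypothesis gives $f^*(M - B) = g^*(\pi^* M - \pi^* B)$, so the modules $H^0(Y, \cO_Y(K_Y + \lceil \cdot \rceil))$ appearing in the two intersections are literally the same; moreover, the trace map $H^0(Y, \cdot) \to H^0(X, \cO_X(K_X + M))$ factors through $H^0(X', \cO_{X'}(K_{X'} + \pi^* M)) \xrightarrow{\iota} H^0(X, \cO_X(K_X + M))$, since the underlying pushforward of a rational differential form is the same computation viewed either over $X$ or over $X'$. By cofinality, the intersection defining $\myB^0(X, B, \cO_X(K_X + M))$ may be computed only over alterations factoring through $X'$, and each contributing image equals $\iota$ applied to the corresponding image in $H^0(X', \cdot)$. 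Since $\iota$ is injective, it commutes with arbitrary intersections, producing
\[
    \myB^0(X, B, \cO_X(K_X + M)) = \iota\bigl(\myB^0(X', \pi^* B, \cO_{X'}(K_{X'} + \pi^* M))\bigr).
\]
The main subtle point is establishing $\iota$ and the verification that trace maps from higher alterations factor through it; this requires careful bookkeeping of how $K_X$ and $K_{X'}$ compare on $\pi$-exceptional divisors, but reduces to standard manipulations with rational $m$-forms once set up. Everything else then follows formally from the alteration description of $\myB^0$.
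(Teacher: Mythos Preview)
Your proposal is correct and takes essentially the same approach as the paper: both invoke \autoref{lem.BasicPropertiesOfB^0AndB^0_D} (the alteration description of $\myB^0$), observe that $\pi$ itself is an alteration so that alterations of $X'$ compose to alterations of $X$, and use the resulting commutative diagram relating the trace maps through $H^0(X',\cO_{X'}(K_{X'}+\pi^*M))$ and $H^0(X,\cO_X(K_X+M))$. The paper presents this very tersely (referring to \cite[Lemma 4.19]{BMPSTWW-MMP} for details), whereas you spell out the cofinality of alterations factoring through $X'$ and the construction of the inclusion $\iota$ explicitly; but the underlying argument is the same.
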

    \begin{proof}
A similar argument can be found in the proof of \cite[Lemma 4.19]{BMPSTWW-MMP}. Recall that a proper birational map between normal schemes is an alteration. For any alteration $f:Y\rightarrow X'$ we have the following commutative diagram.  \[
                    \xymatrix{
                         H^0\left(Y, \cO_Y(K_Y + \lceil f^\ast(\pi^\ast(M-B))\rceil)\right)  \ar[d] \ar[r] & H^0\left(X', \cO_{X'}(K_{X'}+\pi^\ast M)\right) \ar[d]\\
                         H^0\left(Y, \cO_Y(K_Y + \lceil (\pi \circ f)^\ast(M-B)\rceil)\right) \ar[r] & H^0(X, \cO_X(K_X + M))
                    }
                \]
        Applying \autoref{lem.BasicPropertiesOfB^0AndB^0_D} to this diagram yields the desired claim. 
    \end{proof}

    We will need the following result.
    \begin{proposition}
        \label{prop.FiniteEtalExtensionOfRAndB^0}
        With notation as above, suppose that $H^0(X, \cO_X) = R \subseteq S$ is finite \'etale where $S$ is also a domain (and hence local and complete).  Suppose that $M$ is Weil a divisor on $X$.  Let $X_S = X \times_R S$ denote the base change, likewise with $B_S$ and $M_S$.  Then, 
        \[
            \myB^0(X, B, \cO_X(K_X + M)) \otimes_R S = \myB^0(X_S, B_S, \cO_{X_S}(K_{X_S} + M_S)).  
        \]
    \end{proposition}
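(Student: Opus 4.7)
The strategy is to base change every ingredient in \autoref{def.+StableSections} along the finite \'etale extension $R \to S$. Since $R \to S$ is flat with $\fram S = \fram_S$, for any $R$-module (or bounded-below complex) $N$ we have $H^i_\fram(N) \otimes_R S \cong H^i_{\fram_S}(N \otimes^L_R S)$, and for any coherent sheaf $\sF$ on $X$ flat base change gives $\myR \Gamma(X, \sF) \otimes_R S \cong \myR \Gamma(X_S, \sF_S)$.  Combining these applied to $\sF = \cO_X(-M)$ yields
\[
H^d_\fram\bigl(\myR \Gamma(X, \cO_X(-M))\bigr) \otimes_R S \;\cong\; H^d_{\fram_S}\bigl(\myR \Gamma(X_S, \cO_{X_S}(-M_S))\bigr).
\]

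For the codomain side of the map in \autoref{def.+StableSections}, I would identify $X_S^+$ with (a disjoint union of copies of) $X^+$.  Decomposing $X_S = \coprod_j (X_S)_j$ into its (finitely many) normal integral components, each projection $(X_S)_j \to X$ is finite \'etale, and after selecting compatible embeddings $K((X_S)_j) \hookrightarrow \overline{K(X)}$ the system of finite dominant covers of $(X_S)_j$ coincides, after refinement, with the system defining $X^+$: any cover of $(X_S)_j$ is also a finite cover of $X$, and any finite cover of $X$ base-changes to such a cover.  It follows that $\nu_{X_S, *} \cO_{X_S^+}(-\nu^* M_S + \nu^* B_S)$ is naturally isomorphic to $\bigl(\nu_* \cO_{X^+}(-\nu^* M + \nu^* B)\bigr) \otimes_R S$ as $\cO_{X_S}$-modules.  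Flat base change then gives
\[
H^d_\fram\bigl(\myR \Gamma(X^+, \cO_{X^+}(-\nu^*M + \nu^*B))\bigr) \otimes_R S \;\cong\; H^d_{\fram_S}\bigl(\myR \Gamma(X_S^+, \cO_{X_S^+}(-\nu^*M_S + \nu^*B_S))\bigr).
\]

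Since the map in \autoref{def.+StableSections} is functorial in the base, the two displayed isomorphisms are compatible with the maps whose images define $\myB^0$, and flatness of $S$ over $R$ ensures that images commute with $\otimes_R S$.  To pass to Matlis duals, I use that finite \'etale $R \to S$ gives $E_S(k_S) \cong E_R(k) \otimes_R S$, so that for any Artinian $R$-module $A$ there is a natural isomorphism $\Hom_R(A, E_R(k)) \otimes_R S \cong \Hom_S(A \otimes_R S, E_S(k_S))$ (using that $\Hom_R(A, E_R(k))$ is Noetherian over the complete local ring $R$, so $\Hom$--tensor compatibility applies with the finite flat module $S$).  Matlis dualizing the image isomorphism yields the desired equality.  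The main technical obstacle is the identification $\nu_{X_S, *} \cO_{X_S^+} \cong \nu_* \cO_{X^+} \otimes_R S$ in the second paragraph---carefully handling the case where $X_S$ has multiple components and checking that the pushed-forward sheaves with floor-rounded coefficients behave well under the identification of covering systems.  Once this identification is in place, everything else is standard flat base change plus Matlis duality.
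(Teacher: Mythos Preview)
Your overall strategy is the same as the paper's, but there is a genuine gap in your identification of the codomain.  You claim that
\[
\nu_{X_S,*}\,\cO_{X_S^+}\bigl(-\nu^*M_S+\nu^*B_S\bigr)\;\cong\;\bigl(\nu_*\,\cO_{X^+}(-\nu^*M+\nu^*B)\bigr)\otimes_R S,
\]
but this is false.  Since $H^0(X_S,\cO_{X_S})=S$ is a domain, $X_S$ is integral (so your decomposition into components has only one piece), and after one embedding $K(X_S)\hookrightarrow\overline{K(X)}$ we have $X_S^+\cong X^+$, a \emph{single} copy.  On the other hand, flat base change identifies the right-hand side with the pushforward from $X^+\times_X X_S=(X^+)_S$, and the paper shows that $(X^+)_S$ splits as a disjoint union of $[K(S):K(R)]$ copies of $X^+$ (via the Galois closure $S'\supseteq S$ and the decomposition of $S'\otimes_R S$).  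So the two sides of your displayed isomorphism differ by this factor whenever $S\neq R$.

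What this means concretely is that after tensoring with $S$, the map whose image you want is
\[
H^d_{\fram}\bigl(\myR\Gamma(X_S,\cO_{X_S}(-M_S))\bigr)\;\longrightarrow\;\bigoplus_{[K(S):K(R)]} H^d_{\fram}\bigl(\myR\Gamma(X^+,\cO_{X^+}(-\nu^*M+\nu^*B))\bigr),
\]
whereas $\myB^0(X_S,\ldots)$ is defined using the map into a \emph{single} summand.  The missing step is to show these two maps have the same kernel (equivalently, isomorphic images).  The paper supplies exactly this: the various embeddings $K(X_S)\hookrightarrow\overline{K(X)}$ differ by elements of the Galois group acting on $X^+$, so the component maps into each summand all have the same kernel, and hence the image in the direct sum has the same Matlis dual as the image in any one summand.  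Once you insert this Galois argument, the rest of your outline (flatness for images, $E_S\cong\Hom_R(S,E_R)$ for the duality step) goes through as in the paper.
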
    
    \begin{proof}
        Consider the map 
        \begin{equation}
            \label{eq.ImageOfMapBeforeTensor}
            H^d_{\fram}(\myR \Gamma(X,\cO_X(-M))) \to H^d_{\fram}(\myR \Gamma(X^+, \cO_{X^+}(-\nu^* M + \nu^* B)))
        \end{equation}
        whose image $I_1$ is dual to $\myB^0(X, B, \cO_X(K_X + M))$.  Tensoring this map with $S$ yields:
        \[
            H^d_{\fram}(\myR \Gamma(X,\cO_X(-M)) \otimes_R S) \to H^d_{\fram}(\myR \Gamma(X^+, \cO_{X^+}(-\nu^* M + \nu^* B))\otimes_R S).
        \]
        The left side is $H^d_{\fram}(\myR \Gamma(X_S,\cO_{X_S}(-M_S)))$ but the right side is slightly more complicated.  The point is, choosing an embedding of $K(X_S) \subseteq \overline{K(X)}$, we see that $X^+ \to X$ already factors through $X_S$ and so $(X^+)_S = X^+ \times_{\Spec R} \Spec S$ will break up into multiple components since even $S \otimes_R S$ will not be a domain.  
        
        Indeed, let $S' \supseteq S \supseteq R$ denote the integral closure of $R$ in the Galois closure $K(S')$ of $K(S)$ over $K(R)$ (in $K(X^+)$). Then $(X^+)_S = X^+ \times_{\Spec S'} \Spec S' \times_{\Spec R} \Spec S$.  Notice that in fact $S' \otimes_R S$ is a direct sum of $[K(S) : K(R)]$ copies of $S'$.  Hence, we see that $(X^+)_S$ is simply a product of $X^+$'s.  
        
        Furthermore, the map $X_S \to (X^+)_S$ will be the product of maps corresponding to the different ways to embed $K(X_S) \subseteq K(X^+) = \overline{K(X)}$.  Since any such embeddings will differ by a Galois action on $K(X^+)$, we see that each such induced homomorphism
        $H^d_{\fram}(\myR \Gamma(X_S,\cO_{X_S}(-M_S))) \to H^d_{\fram}(\myR \Gamma(X^+, \cO_{X^+}(-\nu^* M + \nu^* B)))$
        has the same kernel (and hence  isomorphic images).  Thus we see that the $S$-Matlis dual of the image of 
        \begin{equation}
            \label{eq.ImageOfMapAfterTensor}
            H^d_{\fram}(\myR \Gamma(X,\cO_X(-M)))\otimes_R S \to H^d_{\fram}(\myR \Gamma(X^+, \cO_{X^+}(-\nu^* M + \nu^* B)))\otimes_R S.
        \end{equation}
        is indeed $\myB^0(X_S, B_S, \cO_{X_S}(K_{X_S} + M_S))$.  The image $I_2$ of \autoref{eq.ImageOfMapAfterTensor} is also the tensor product of the image  $I_1$ of \autoref{eq.ImageOfMapBeforeTensor} since $R \to S$ is flat.  In other words $I_2 = I_1 \otimes_R S$.  

        Let $n = \dim R = \dim S$.  Notice that $E_S = \Hom_R(S, E_R)$ since $R \to S$ is finite and so:
        \[
            \Hom_S(I_2, E_S) \cong \Hom_R(I_2, E_R) = \Hom_R(S \otimes_R I_1, E_R) = S \otimes \Hom_R(I_1, E_R)
        \]
        Here the first isomorphism is true since $R \to S$ is finite and the final isomorphism can be found in \cite[Exercise 14 on Page 45]{Bourbaki1998} since $S$ is finite and $E_R$ is injective.
    \end{proof}

    We also recall \cite[Theorem 7.2]{BMPSTWW-MMP}.
    \begin{theorem}[Lifting sections]            
        \label{thm.LiftingSections}
        With notation as in \autoref{def.+StableSections}, suppose $ K_X+M$ is Cartier and that $M  - D - B$ is $\bQ$-Cartier, big and semi-ample.  Suppose that $D^{\Norm}\to D$ is the normalization and $\kappa : D^{\Norm} \to X$ is the canonical map and write $\kappa^* \cO_X(K_X + M) \cong \cO_{D^{\Norm}}(K_{D^{\Norm}} + M_{D^{\Norm}})$ for some Weil divisor $M_{D^{\Norm}}$.   Then the map
        $H^0(X, \cO_X(K_X + M)) \to H^0(D^{\Norm}, \cO_{D^{\Norm}}(K_{D^{\Norm}} + M_{D^{\Norm}}))$
         induces a surjection        
        \[
            \myB^0_D(X, D+B, \cO_X(K_X + M)) \twoheadrightarrow \myB^0(D^{\Norm}, \Diff_{D^{\Norm}}(B + D), \cO_{D^{\Norm}}(K_{D^{\Norm}} + M_{D^{\Norm}})).
        \]    
    \end{theorem}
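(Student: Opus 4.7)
The plan is to Matlis-dualize the surjection assertion and prove the corresponding injectivity of local-cohomology images by combining a ``residue'' short exact sequence on $X^+$ with Bhatt's vanishing theorem \autoref{BhattVanishing}.

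First, since $D^{\Norm} = \bigsqcup_i D_i^{\Norm}$ and the definition of $\myB^0_D$ involves a direct sum over the components $D_i$, it suffices to prove the statement one component $D_i$ at a time. By \autoref{def.+StableSections}, \autoref{def.AdjointStableSections}, and Matlis duality, the desired surjection is equivalent to the injectivity, for each $i$, of a natural map
\[
H^{d-1}_{\fram}\bigl(\myR\Gamma((D_i^{\Norm})^+, \cO_{(D_i^{\Norm})^+}(-\nu^*M_{D_i^{\Norm}}+\nu^*\Diff_{D_i^{\Norm}}(B+D)))\bigr) \longrightarrow H^d_{\fram}\bigl(\myR\Gamma(X^+, \cO_{X^+}(-\nu^*M+\nu^*(D+B)-D_i^+))\bigr),
\]
compatibly with the analogous natural map $H^{d-1}_{\fram}(\myR\Gamma(D_i^{\Norm},-)) \to H^d_{\fram}(\myR\Gamma(X,-))$ induced by a residue sequence on $X$ itself.

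Second, the map in question will arise as the connecting homomorphism of the short exact sequence on $X^+$
\[
0 \to \cO_{X^+}(-\nu^*M+\nu^*(D+B)-D_i^+) \to \cO_{X^+}(-\nu^*M+\nu^*(D+B)) \to \cO_{D_i^+}(-\nu^*M+\nu^*(D+B)) \to 0,
\]
itself the filtered colimit of analogous residue sequences on finite normal covers $f:Y\to X$ on which $f^*M$, $f^*D$, and $\lceil f^*B \rceil$ are integral. Applying $\myR\Gamma_{\fram}\myR\Gamma(X^+,-)$ to this sequence, the connecting map will be injective in the degrees $d-1 \to d$ provided
\[
H^{d-1}_{\fram}\bigl(\myR\Gamma(X^+,\cO_{X^+}(\nu^*(-M+D+B)))\bigr)=0.
\]
Since $M-D-B$ is $\bQ$-Cartier, big, and semi-ample by hypothesis, I would pass to a finite cover on which it becomes Cartier and apply Bhatt's vanishing \autoref{BhattVanishing} there, which gives the needed vanishing on $X^+$.

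Third, I would identify the subscheme $D_i^+\subseteq X^+$ with the absolute integral closure $(D_i^{\Norm})^+$ of the prime divisor $D_i$ (both are colimits of integral closures of $D_i$ in compatible chains of finite normal covers of $X$), and identify the restricted line bundle via adjunction: rewriting $-M+(D+B)=-(K_X+M)+(K_X+D+B)$ and using $(K_X+D+B)|_{D_i^{\Norm}}=K_{D_i^{\Norm}}+\Diff_{D_i^{\Norm}}(B+D)$, one obtains
\[
\cO_{D_i^+}(-\nu^*M+\nu^*(D+B)) \cong \cO_{(D_i^{\Norm})^+}(-\nu^*M_{D_i^{\Norm}}+\nu^*\Diff_{D_i^{\Norm}}(B+D)).
\]
Summing over $i$ and Matlis-dualizing the resulting injection of local-cohomology images then produces the required surjection of $\myB^0_D$ onto $\myB^0$ stated in the theorem.

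The main obstacle will be the adjunction identification in the third step: one has to verify that the restriction of the line bundle on $X^+$ to $D_i^+$ genuinely picks up the different $\Diff_{D_i^{\Norm}}(B+D)$ rather than the naive restriction of $B+D$ from $X$, and ensure that the rounding is compatible across the directed system of finite covers when $B$ is only $\bQ$-Cartier. A closely related technical point is the passage to a finite cover of $X$ on which $M-D-B$ is Cartier in order to invoke \autoref{BhattVanishing}; this is routine, but should be performed carefully so that the resulting vanishing and the residue sequence both pull back consistently to $X^+$.
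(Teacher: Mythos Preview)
The paper does not give its own proof of this statement: it simply recalls it as \cite[Theorem 7.2]{BMPSTWW-MMP}. Your proposal is exactly the argument behind that cited result---Matlis-dualize, use the residue short exact sequence on $X^+$, apply Bhatt's vanishing \autoref{BhattVanishing} to kill $H^{d-1}_{\fram}$ of the middle term, and identify $D_i^+$ with $(D_i^{\Norm})^+$ together with the adjunction formula to recognize the quotient. You have also correctly flagged the one genuinely delicate point: verifying that the restriction to $D_i^+$ picks up the different $\Diff_{D_i^{\Norm}}(B+D)$ rather than a naive pullback. This is handled in \cite[Proof of Theorem 3.1]{MaSchwedeTuckerWaldronWitaszekAdjoint} and in \cite{BMPSTWW-MMP}, and is the substance of the argument; the rest of your outline is routine. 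So your plan is correct and matches the approach of the reference the paper cites.
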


    \begin{remark}[Lifting sections without assuming that $K_X + M$ is Cartier]
        Suppose that $X \to \Spec R$ is projective and that $K_X+M$ is not Cartier but that $ K_X+M + F$ is Cartier  for some effective divisor $F$ with no common components with $D$.  Replacing $M$ by  $M+F$  and $B$ by  $B+F$, we may apply \autoref{thm.LiftingSections}, to obtain a surjection 
        \[
            \myB^0_D(X, D+B + F, \cO_X(K_X + M +F)) \to \myB^0(D^{\Norm}, \Diff_{D^{\Norm}}(B + D + F), \kappa^* \cO_X(K_X + M +F)).
        \]

        But by \autoref{lem.SimplePropertiesOfB0_D} \autoref{lem.SimplePropertiesOfB0_D.4}, we see that
        the left side is isomorphic to $\myB^0_D(X, D+B, \cO_X(K_X + M))$.
        Thus $\myB^0_D(X, D+B, \cO_X(K_X + M))$ surjects onto a meaningful analog of $\myB^0(D^{\Norm}, \Diff_{D^{\Norm}}(B + D), \cO_{D^{\Norm}}(K_{D^{\Norm}} + M_{D^{\Norm}}))$. Note that the term $F$ contributes to the different (or perhaps more accurately, the fact that $K_X + M$ is not Cartier contributes to the different).
    \end{remark}

\section{A global mixed characteristic theory of test ideals}

    Based on the equal characteristic setting, at least for big enough twists by an ample line bundle, one may expect that the sections $\myB^0$ ought to globally generate certain important subsheaves of $\cO_X(K_X + M)$.  In equal characteristic, under suitable assumptions, analogs of $\myB^0$  generate multiplier ideals and test ideals tensored with $\cO_X(K_X + M)$ (see \cite[Chapter 10]{LazarsfeldPositivity2}, \cite{SchwedeACanonicalLinearSystem}, \cite{MustataNonNefLocusPositiveChar}).  
    
    In this section, we introduce the sheaves generated by $\myB^0$.  Note that the test ideals and modules introduced in \cite{MaSchwedeSingularitiesMixedCharBCM} require the ring we are working with to be complete.  While we believe some of these ideas still work for non-complete bases as in \cite{TY-MMP}, as of now we seem to get stronger results for complete bases, and so we restrict to that setting.

    Throughout this section, we work in the following setting.
    \begin{setting}\label{setting4.1}
        Fix $R$  a complete local ring with residual characteristic $p > 0$ and suppose that $X$ is a normal integral scheme projective over $R$.    We fix $\sL=\mathcal O _X(L)$ a very ample line bundle on $X$.
    \end{setting}

    Typically we also assume that $H^0(X, \cO_X) = R$.  Indeed, we may reduce to this case by replacing $R$ by $H^0(X, \cO_X)$ if necessary.  Since $H^0(X, \cO_X)$ is a finite extension of $R$, this does not impact the computation of Matlis duality.
    
    We aim to introduce a notion of test ideals on $X$.

\subsection{Test ideals via global sections}
    \label{subsec.AlternateConstruction}

    We begin with the following lemma.


      \begin{lemma}   
        \label{lemma.EasyInclusionForB0ByMultiplication}
        With notation as above, let $M$ and $N$ be divisors on $X$.  Then, the image of the canonical multiplication map:
        \[
            \myB^0(X, B, \cO_X(K_X + M)) \otimes_R H^0(X, \cO_X(N)) \to H^0(X, \cO_X(K_X + M + N))
        \]        
        is contained in $\myB^0(X, B, \cO_X(K_X + M + N))$. Similarly for $\myB^0_D$, the image of
        \[
            \myB^0_D(X, D+B, \cO_X(K_X + M)) \otimes_R H^0(X, \cO_X(N)) \to H^0(X, \cO_X(K_X + M + N))
        \]      
        is contained in $\myB^0_D(X, D+B, \cO_X(K_X + M + N))$.
    \end{lemma}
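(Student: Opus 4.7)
The plan is to use the characterization from \autoref{lem.BasicPropertiesOfB^0AndB^0_D} expressing $\myB^0(X, B, \cO_X(K_X+M))$ as the intersection, over finite dominant maps $f : Y \to X$ with $Y$ normal integral, of the images of the maps
\[
    H^0(Y, \cO_Y(K_Y + \lceil f^*(M-B)\rceil)) \to H^0(X, \cO_X(K_X+M)).
\]
It therefore suffices, for fixed $s \in \myB^0(X, B, \cO_X(K_X+M))$ and $t \in H^0(X, \cO_X(N))$, to show that for every such cover $f$ the product $st$ lies in the image of the corresponding map for $K_X + M + N$, since then $st$ automatically lies in the intersection that characterizes $\myB^0(X, B, \cO_X(K_X+M+N))$.

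First, lift $s$ to a section $\tilde s \in H^0(Y, \cO_Y(K_Y + \lceil f^*(M-B)\rceil))$. Viewing $t$ as a rational function with $\divisor(t) + N \geq 0$, pulling back yields $f^*t \in H^0(Y, \cO_Y(f^*N))$ (here we use that pullback of Weil divisors along a finite dominant map between normal integral schemes is defined and preserves effectivity, or equivalently that $f^*t$ lies in the reflexive pullback $f^{[*]}\cO_X(N) = \cO_Y(f^*N)$). The product $\tilde s \cdot f^*t$, taken as rational sections, is a section of the reflexive hull $\cO_Y(K_Y + \lceil f^*(M-B)\rceil + f^*N)$. Since $f^*N$ has integer coefficients, the rounding identity $\lceil \alpha \rceil + \beta = \lceil \alpha + \beta \rceil$ for integer $\beta$ gives
\[
    \lceil f^*(M-B) \rceil + f^*N = \lceil f^*(M+N-B) \rceil.
\]
Therefore $\tilde s \cdot f^*t \in H^0(Y, \cO_Y(K_Y + \lceil f^*(M+N-B)\rceil))$, and its image in $H^0(X, \cO_X(K_X+M+N))$ is exactly $st$, completing the verification for $\myB^0$.

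For the adjoint variant $\myB^0_D$, the argument is structurally identical using the analogous intersection description from \autoref{lem.BasicPropertiesOfB^0AndB^0_D}: lift $s$ to a tuple $(\tilde s_i) \in H^0(Y, \bigoplus_{i=1}^t \cO_Y(K_Y + D_{i,Y} + \lceil f^*(M-B-D)\rceil))$ and multiply each component by $f^*t$, again exploiting that $f^*N$ has integer coefficients to absorb it into the ceiling. The routine obstacles — tensor products of non-invertible divisorial sheaves on possibly singular normal $Y$, and pullback of Weil-divisorial sections — are handled cleanly by working with rational sections and divisors of poles/zeros, and the main conceptual step is simply the passage from the Matlis-dual definition to the intersection-of-images formulation, which is already done in \autoref{lem.BasicPropertiesOfB^0AndB^0_D}.
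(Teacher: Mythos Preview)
Your proof is correct and follows essentially the same approach as the paper: both arguments invoke the intersection-of-images description of $\myB^0$ from \autoref{lem.BasicPropertiesOfB^0AndB^0_D}, lift $s$ to a section on each finite cover $Y$, pull back $t$ to $f^*t$, and multiply. The only cosmetic difference is that the paper restricts to covers where $\pi^*B$ has integer coefficients (so no rounding is needed), whereas you keep the ceiling and use $\lceil f^*(M-B)\rceil + f^*N = \lceil f^*(M+N-B)\rceil$; since such covers are cofinal, the two are equivalent.
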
 
    \begin{proof}
        Fix a choice of $R$-module generators of $y_1, \dots, y_m \in H^0(X, \cO_X(N))$.  We will pull these back to $\pi^* y_i$ global sections of $\cO_Y(\pi^* N)$ for each $\pi : Y \to X$ a finite cover from a normal integral scheme.  Now, suppose that  $z \in \myB^0(X, B, \cO_X(K_X + M))$, then for each finite morphism $\pi : Y \to X$ as before with $\pi^* B$ a Weil divisor,  there exists $z_Y \in H^0(Y, \cO_Y(K_Y - \pi^* B + \pi^* M))$ mapping to $z$ via the corresponding trace map.  Hence $z_Y \otimes \pi^* y_i \in H^0(Y, \cO_Y(K_Y - \pi^* B + \pi^* M + \pi^* N))$ maps to $z \otimes y_i$.  The argument for $\myB^0_D$ is the same.  This proves the lemma.  
    \end{proof}
    
    \begin{definition}[Test modules]\label{testmoduledefn}
        Suppose that $X$ and $\sL$ are as in \autoref{setting4.1}. For each $i$, we define the subsheaf $\sN_i \subseteq \omega_X\otimes \sL^i$ generated by $\myB^0(X, B,\omega_X\otimes\sL^i)\subseteq H^0(X, \omega_X\otimes\sL^i)$, and define $\sJ_i := \sN_i\otimes\sL^{-i}$ a subsheaf of $\omega_X$.  Because $\sL$ is globally generated, \autoref{lemma.EasyInclusionForB0ByMultiplication} implies that $\sJ_i\subseteq \sJ_{i+1}\subseteq \omega_X$, and hence $\sJ_i = \sJ_{i+1}$ for large enough $i$. One may then define the \emph{$\bigplus$-test submodule} $\tau_+^\sL(\omega_X, B)$ to be $\sJ_i$ for $i \gg 0$.  If $B = 0$, we simply write $\tau_+^\sL(\omega_X)$.
    \end{definition}
    \begin{definition}[Adjoint modules]\label{adjointmoduledefn}
        Likewise, with notation as in \autoref{def.AdjointStableSections}, the \emph{adjoint $\bigplus$-test module} $\adj_+^{D,\sL}(\omega_X(D),B)$ is defined as follows.  For each $i > 0$ let $\sN_i \subseteq \omega_X(D) \otimes \sL^i = \cO_X(K_X + D) \otimes \sL^i$ be the submodule generated by $\myB^0_D(X,B+D,\cO_X(K_X + D) \otimes \sL^i)$.  Setting, $\sI_{i} = \sN_i \otimes \sL^{-i} \subseteq \omega_X(D)$, we see that $\sI_i \subseteq \sI_{i+1}$ by \autoref{lemma.EasyInclusionForB0ByMultiplication}.  We define $\adj_+^{D,\sL}(\omega_X(D),B) := \sI_{i}$ for $i \gg 0$.
    \end{definition}

\subsection{Independence of choice of ample $\sL$}
    Our next goal is to show that $\tau_+^{\sL}(\omega_X, B )$ is independent of the choice of the ample line bundle $\sL$.  


    \begin{proposition}
        \label{prop.Tau+IndependentOfAmple}
        Given any two very ample line bundles $\sL$ and $\sL'$, we have that 
        \[
            \tau_+^{\sL}(\omega_X, B) = \tau_+^{\sL'}(\omega_X, B).
        \]
        Moreover, for any sufficiently ample line bundle $\sA$, then $\tau_+(\omega_X, B) \otimes \sA$ is generated by $\mathbf B ^0(X,B,  \omega _X\otimes \sA).$
        The analogous statements also hold for $\adj_+^{D, \sL}(\omega_X(D), B)$.
    \end{proposition}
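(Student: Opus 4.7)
The plan is to use \autoref{lemma.EasyInclusionForB0ByMultiplication}---which says that products of $\myB^0$-sections with arbitrary global sections remain in $\myB^0$---together with the monotonicity of the filtration $\sJ_i \subseteq \sJ_{i+1}$ from \autoref{testmoduledefn} to prove the ``Moreover'' clause and the independence statement simultaneously. Fix $i_0$ so that $\sJ_i = \tau_+^{\sL}(\omega_X, B)$ for all $i \geq i_0$; then $\tau_+^{\sL}(\omega_X, B) \otimes \sL^{i_0}$ is globally generated by $\myB^0(X, B, \omega_X \otimes \sL^{i_0})$ directly from the definition.

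The key step is a product-generation argument. Given any line bundle $\sA$ for which $\sA \otimes \sL^{-i_0}$ is globally generated---which holds for all sufficiently ample $\sA$ by Serre's theorem---the factorization
\[
    \tau_+^{\sL}(\omega_X, B) \otimes \sA \;\cong\; \bigl(\tau_+^{\sL}(\omega_X, B) \otimes \sL^{i_0}\bigr) \otimes \bigl(\sA \otimes \sL^{-i_0}\bigr)
\]
realizes the left-hand side as a tensor of two globally generated sheaves, hence it is generated by products $s \cdot t$ with $s \in \myB^0(X, B, \omega_X \otimes \sL^{i_0})$ and $t \in H^0(X, \sA \otimes \sL^{-i_0})$. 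By \autoref{lemma.EasyInclusionForB0ByMultiplication}, each such product lies in $\myB^0(X, B, \omega_X \otimes \sA)$. Thus $\tau_+^{\sL}(\omega_X, B) \otimes \sA$ is globally generated by a subset of $\myB^0(X, B, \omega_X \otimes \sA)$ and is in particular contained in the subsheaf $\sN(\sA) \subseteq \omega_X \otimes \sA$ generated by all of $\myB^0(X, B, \omega_X \otimes \sA)$.

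Independence now follows by symmetry. Given a second very ample bundle $\sL'$, choose $j$ large enough that $(\sL')^j \otimes \sL^{-i_0}$ is globally generated and apply the preceding step with $\sA = (\sL')^j$; untwisting by $(\sL')^{-j}$ yields $\tau_+^{\sL}(\omega_X, B) \subseteq \sJ_j^{\sL'} \subseteq \tau_+^{\sL'}(\omega_X, B)$, and interchanging the roles of $\sL$ and $\sL'$ gives equality, so one may unambiguously denote the common value by $\tau_+(\omega_X, B)$. With independence in hand, the reverse inclusion $\sN(\sA) \subseteq \tau_+(\omega_X, B) \otimes \sA$ for $\sA$ itself very ample is immediate from monotonicity: unwinding \autoref{testmoduledefn} applied with $\sA$ as the very ample line bundle gives $\sN(\sA) = \sJ_1^{\sA} \otimes \sA \subseteq \tau_+^{\sA}(\omega_X, B) \otimes \sA = \tau_+(\omega_X, B) \otimes \sA$. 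Combined with the previous containment, this yields $\tau_+(\omega_X, B) \otimes \sA = \sN(\sA)$.

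The adjoint variant $\adj_+^{D,\sL}(\omega_X(D), B)$ is handled by the identical argument after substituting $\myB^0_D$ for $\myB^0$ throughout and using the second half of \autoref{lemma.EasyInclusionForB0ByMultiplication}. The main obstacle is to choose the threshold of ``sufficiently ample'' so that the positivity hypothesis ($\sA \otimes \sL^{-i_0}$ globally generated) and the condition needed for the reverse containment ($\sA$ very ample, for the monotonicity step) can be met simultaneously; interpreting ``sufficiently ample'' as $\sA = \sA_0^n$ with $n$ large for some fixed ample $\sA_0$ makes both conditions automatic.
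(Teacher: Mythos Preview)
Your proof is correct and follows essentially the same approach as the paper: both arguments rest on \autoref{lemma.EasyInclusionForB0ByMultiplication} to propagate generation and on the monotonicity $\sJ_1^{\sA} \subseteq \tau_+^{\sA}$ for the reverse containment. The only organizational difference is that the paper first establishes independence (reducing to $\sL' = \sL \otimes \sG$ with $\sG$ globally generated by passing to powers) and then deduces the ``Moreover'' clause, whereas you prove the product-generation step first and derive independence from it; the underlying mechanics are identical.
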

    \begin{proof}
        It is clear from the definition that we have equality when $\sL' = \sL^k$ for any $k > 0$ since the $\sN_i$ are ascending.  
        Hence, taking high enough powers of $\sL'$ again if necessary, we may assume that $\sL' = \sL \otimes \sG$ where $\sG$ is a globally generated (even very ample) line bundle.

         \autoref{lemma.EasyInclusionForB0ByMultiplication}  implies that $\tau_+^\sL(\omega_X, B) \subseteq \tau_+^{\sL'}(\omega_X, B)$.  Reversing the roles of $\sL$ and $\sL'$ proves the reverse inclusion and hence we have shown that: 
         \[
            \tau_+^{\sL}(\omega_X, B) = \tau_+^{\sL'}(\omega_X, B).
        \]
        The proof for $\adj_+^D(\omega_X(D), B)$ is the same.
        
        Suppose now that $\sA$ is sufficiently ample and in particular $\sA\otimes \sL ^{-i}$ is very ample for some $i \gg 0$. Let $M \otimes \sA \subseteq \omega_X \otimes \sA$ be the module generated by
        $\mathbf B ^0(X,B,  \omega _X\otimes \sA)$. Arguing as above $\mathbf B ^0(X,B,  \omega _X\otimes \sL^i)\otimes H^0(X,\sA\otimes \sL ^{-i})$ is contained in $\mathbf B ^0(X,B,  \omega _X\otimes \sA)$ and hence $\tau^\sL  _+(\omega _X,B)\subseteq M$. On the other hand since $\mathbf B ^0(X,B,  \omega _X\otimes \sA)\otimes H^0(X,\sA ^{j-1})$ is contained in $\mathbf B ^0(X,B,  \omega _X\otimes \sA^j)$ for any $j>0$, it follows that 
        $M\subseteq \tau ^\sA_+(\omega _X,B)$. Since we have already seen that $\tau ^\sA_+(\omega _X,B)= \tau^\sL  _+(\omega _X,B)$, the claim follows.  The proof for $\adj_+^D(\omega_X(D), B)$ is analogous.
    \end{proof}

    \begin{notation}
        In view of \autoref{prop.Tau+IndependentOfAmple}, we omit the $\sL$ from the notation of $\tau_+^\sL(\omega_X, B)$ (respectively, $\adj_+^{D, \sL}(\omega_X(D), B)$) and instead simply write $\tau_+(\omega_X, B)$ (respectively, write $\adj_+^{D}(\omega_X(D), B)$).
    \end{notation}

      We make the following observation.

    \begin{proposition}
        \label{prop.B0ForBigiIsGlobalSections}
        For $i \gg 0$, we have that 
        \[
            H^0(X, \tau_+(\omega_X, B) \otimes \sL^i) = \myB^0(X, B, \omega_X \otimes \sL^i)
        \]
        and similarly that 
    
        \[
            H^0(X, \adj^{D}_+(\omega_X(D), B) \otimes \sL^i) = \myB^0_D(X, D+B, \omega_X(D) \otimes \sL^i).
        \]
    \end{proposition}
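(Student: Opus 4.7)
The plan is to prove each containment separately.  The inclusion $\myB^0(X, B, \omega_X \otimes \sL^i) \subseteq H^0(X, \tau_+(\omega_X, B) \otimes \sL^i)$ is immediate once $i$ is large enough that $\sN_i = \tau_+(\omega_X, B) \otimes \sL^i$ (i.e., past the stabilization range of \autoref{testmoduledefn}), since by construction $\sN_i$ is the $\cO_X$-submodule of $\omega_X \otimes \sL^i$ generated by $\myB^0(X, B, \omega_X \otimes \sL^i)$, so in particular every such generator is a global section of $\sN_i$.

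For the reverse inclusion, I would first fix one index $i_0$ past the stabilization range, and choose finitely many elements $\alpha_1, \ldots, \alpha_n \in \myB^0(X, B, \omega_X \otimes \sL^{i_0})$ generating the coherent sheaf $\tau_+(\omega_X, B) \otimes \sL^{i_0} = \sN_{i_0}$ as an $\cO_X$-module.  Such a finite set exists because on the Noetherian scheme $X$ an ascending chain of coherent subsheaves of a coherent sheaf must stabilize, and the subsheaves generated by finite subfamilies of $\myB^0(X, B, \omega_X \otimes \sL^{i_0})$ form exactly such a chain whose union is all of $\sN_{i_0}$.  This yields a short exact sequence
\[
    0 \to \sK \to \cO_X^{\oplus n} \to \tau_+(\omega_X, B) \otimes \sL^{i_0} \to 0
\]
where the right map is determined by the $\alpha_k$.

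Next, for each $i \geq i_0$, twisting by $\sL^{i - i_0}$ gives
\[
    0 \to \sK \otimes \sL^{i - i_0} \to (\sL^{i - i_0})^{\oplus n} \to \tau_+(\omega_X, B) \otimes \sL^i \to 0.
\]
By Serre vanishing, there exists $i_1 \geq i_0$ such that $H^1(X, \sK \otimes \sL^{i - i_0}) = 0$ for all $i \geq i_1$, so the induced map $H^0(X, \sL^{i - i_0})^{\oplus n} \twoheadrightarrow H^0(X, \tau_+(\omega_X, B) \otimes \sL^i)$ is surjective.  Any section $s$ on the right therefore has the form $\sum_{k} y_k \cdot \alpha_k$ with $y_k \in H^0(X, \sL^{i - i_0})$, and \autoref{lemma.EasyInclusionForB0ByMultiplication} places each summand (and hence $s$) in $\myB^0(X, B, \omega_X \otimes \sL^i)$.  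This gives the desired equality for all $i \geq i_1$.

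The argument for the adjoint statement is completely parallel, substituting $\myB^0_D$ for $\myB^0$ and \autoref{adjointmoduledefn} for \autoref{testmoduledefn}, and invoking the adjoint half of \autoref{lemma.EasyInclusionForB0ByMultiplication}.  The only mildly delicate point is the finite generation of $\sN_{i_0}$ by sections in $\myB^0$; once this is secured, Serre vanishing applied to the (now fixed) kernel $\sK$ handles the argument uniformly in $i$, and no further care is needed.
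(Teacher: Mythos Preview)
Your proof is correct.  Both your argument and the paper's hinge on \autoref{lemma.EasyInclusionForB0ByMultiplication} for the hard inclusion, but the implementations differ.  The paper packages the family $\{\myB^0(X,B,\omega_X\otimes\sL^i)\}_{i\geq n}$ as a graded submodule $J$ of the canonical module over the section ring $S=\bigoplus_{i\geq 0}H^0(X,\sL^i)$, then invokes the standard EGA\,III correspondence between graded $S$-modules and coherent sheaves to conclude $[J]_i = H^0(X,\sJ\otimes\sL^i)$ in high degree and to identify $\sJ$ with $\tau_+(\omega_X,B)$.  Your approach is more elementary: you fix a single $i_0$, choose finitely many $\myB^0$-generators for $\tau_+(\omega_X,B)\otimes\sL^{i_0}$, and then apply Serre vanishing to the kernel of the resulting presentation to force surjectivity on $H^0$ after further twisting.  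The paper's route is slicker if one is already thinking in terms of the section ring (which the paper uses elsewhere), while yours avoids that machinery entirely and makes the role of Serre vanishing explicit; both arrive at the same conclusion with the same essential input.
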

    \begin{proof}
        By \autoref{testmoduledefn} we have that $\tau_+(\omega_X, B)\otimes \sL^i = \sJ_i \otimes \sL^i = \sN_i$ is generated by $\myB^0(X, B, \omega_X\otimes \sL^i)$ for $i\gg0$.  We must then prove that  the induced inclusion $\myB^0(X, B, \omega_X\otimes \sL^i)\subseteq H^0(X, \tau_+(\omega_X, B) \otimes \sL^i)$ is an equality for $i\gg 0$. 

        Let $S = \bigoplus_{i\geq 0} H^0(X, \sL^i)$ be the graded ring whose degree zero part is the complete local ring $(R,\mathfrak{m})$. Fix $N\gg 0$. We consider the graded $S$-module $J:= \bigoplus_{i\geq N} \myB^0(X,B,\omega_X\otimes \sL^i)$. \autoref{lemma.EasyInclusionForB0ByMultiplication} proves that $J$ is a graded $S$-submodule of the graded canonical module $\omega_S$ (which in large degrees is $H^0(X, \omega_X \otimes \sL^i)$, see \cite[Section 5]{BMPSTWW-MMP}). The sheaf $\sJ$ associated to the graded module $J$ is a coherent subsheaf of $\omega_X$ on $X$.  Furthermore, we have that $H^0(X, \sJ \otimes \sL^i)$ is the $i$th graded piece $[J]_i$ of $J$ in sufficiently high degree $i$ by \cite[Theorem 2.3.1]{EGAIII1} (a similar statement is found in \cite[Exercise 2.5.9 (b)]{Hartshorne} under unnecessary geometric hypotheses).  Furthermore, $H^0(X, \sJ \otimes \sL^i)$ globally generates $\sJ \otimes \sL^i$ for $i \gg 0$.  Therefore $\sJ \otimes \sL^i = \tau_+(\omega_X, B)\otimes \sL^i$, since the right side is generated by $ \myB^0(X,B,\omega_X\otimes \sL^i)$.  This proves the desired result.  The adjoint statement in the corollary follows from the same reasoning with minor modifications.
    \end{proof}

    \begin{lemma}[Basic facts about $\tau_+$ and $\adj_+$]
        \label{lem.BasicFactsAboutTauAndAdj}
        With notation as above:
        \begin{enumerate}
            \item If $B' \geq B$ then $\tau_+(\omega_X, B') \subseteq \tau_+(\omega_X, B)$ and likewise $\adj_+^D(\omega_X(D), B') \subseteq \adj^D_+(\omega_X(D), B)$.
                \label{lem.BasicFactsAboutTauAndAdj.a}
            \item If $F\geq 0$ is a Cartier divisor, then $\tau _+(\omega_X, B+F) =\tau _+(\omega_X, B)\otimes \mathcal O _X(-F)$ and 
                $\adj_+^D(\omega_X(D), B+F)=\adj_+^D(\omega_X(D), B)\otimes \mathcal O _X(-F)$.
                \label{lem.BasicFactsAboutTauAndAdj.b}
            \item For any $0 < \epsilon \leq 1$ we have         
                \[
                    \tau_+(\omega_X, D+B) \subseteq \adj_+^D(\omega_X(D), D+B) \subseteq \tau_+(\omega_X, (1-\epsilon)D + B).
                \]            
                \label{lem.BasicFactsAboutTauAndAdj.c}
        \end{enumerate}
    \end{lemma}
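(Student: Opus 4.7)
My plan is to derive all three parts directly from the corresponding properties of $\myB^0$ and $\myB^0_D$ established in \autoref{lem.SimplePropertiesOfB0} and \autoref{lem.SimplePropertiesOfB0_D}, combined with the fact (\autoref{prop.Tau+IndependentOfAmple}, \autoref{prop.B0ForBigiIsGlobalSections}) that for $i \gg 0$ the sheaves $\tau_+(\omega_X, B) \otimes \sL^i$ and $\adj_+^D(\omega_X(D), D+B) \otimes \sL^i$ are precisely the subsheaves generated by $\myB^0$ and $\myB^0_D$ respectively. Since each inclusion or identification of sections induces the same relation on generated subsheaves, the properties transfer from $\myB^0$ to $\tau_+$ (and $\myB^0_D$ to $\adj_+^D$) after untwisting by $\sL^i$.

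For \autoref{lem.BasicFactsAboutTauAndAdj.a}, fix $\sL$ as in \autoref{setting4.1} and take $i \gg 0$. Then \autoref{lem.SimplePropertiesOfB0}\autoref{lem.SimplePropertiesOfB0.1} gives $\myB^0(X, B', \omega_X \otimes \sL^i) \subseteq \myB^0(X, B, \omega_X \otimes \sL^i)$, so the subsheaves of $\omega_X \otimes \sL^i$ they generate satisfy $\tau_+(\omega_X, B') \otimes \sL^i \subseteq \tau_+(\omega_X, B) \otimes \sL^i$; untwisting by $\sL^i$ yields the claim. The adjoint version is identical via \autoref{lem.SimplePropertiesOfB0_D}\autoref{lem.SimplePropertiesOfB0_D.1}.

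For \autoref{lem.BasicFactsAboutTauAndAdj.b}, apply \autoref{lem.SimplePropertiesOfB0}\autoref{lem.SimplePropertiesOfB0.3} with $M = iL$ for $i \gg 0$. This produces a canonical isomorphism
\[
    \myB^0(X, B, \omega_X \otimes \sL^i \otimes \cO_X(-F)) \xrightarrow{\ \sim\ } \myB^0(X, B+F, \omega_X \otimes \sL^i)
\]
induced by the inclusion $H^0(X, \omega_X \otimes \sL^i \otimes \cO_X(-F)) \hookrightarrow H^0(X, \omega_X \otimes \sL^i)$. Because $F$ is Cartier, $\sL^i \otimes \cO_X(-F)$ is sufficiently ample for $i \gg 0$, so by \autoref{prop.Tau+IndependentOfAmple} the left-hand module generates $\tau_+(\omega_X, B) \otimes \sL^i \otimes \cO_X(-F)$, while the right-hand module generates $\tau_+(\omega_X, B+F) \otimes \sL^i$. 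The isomorphism at the level of sections then translates, under the inclusion $\omega_X \otimes \sL^i \otimes \cO_X(-F) \hookrightarrow \omega_X \otimes \sL^i$, into an equality of subsheaves; untwisting by $\sL^i$ gives $\tau_+(\omega_X, B+F) = \tau_+(\omega_X, B) \otimes \cO_X(-F)$. The adjoint identity follows in the same manner using \autoref{lem.SimplePropertiesOfB0_D}\autoref{lem.SimplePropertiesOfB0_D.4}.

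For \autoref{lem.BasicFactsAboutTauAndAdj.c}, applying \autoref{lem.SimplePropertiesOfB0_D}\autoref{lem.SimplePropertiesOfB0_D.3} with $M = iL$ gives, inside $H^0(X, \omega_X \otimes \sL^i)$,
\[
    \myB^0(X, D+B, \omega_X \otimes \sL^i) \subseteq \myB^0_D(X, D+B, \omega_X \otimes \sL^i) \subseteq \myB^0(X, (1-\epsilon)D+B, \omega_X \otimes \sL^i).
\]
The outer terms generate $\tau_+(\omega_X, D+B) \otimes \sL^i$ and $\tau_+(\omega_X, (1-\epsilon)D+B) \otimes \sL^i$ respectively. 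For the middle term, \autoref{rem.DefinitionOfAdjGeneralVersion} identifies $\myB^0_D(X, D+B, \omega_X \otimes \sL^i)$ with $\myB^0_D(X, D+B, \omega_X(D) \otimes \sL^i)$ via the inclusion $H^0(X, \omega_X \otimes \sL^i) \hookrightarrow H^0(X, \omega_X(D) \otimes \sL^i)$, and the latter generates $\adj_+^D(\omega_X(D), D+B) \otimes \sL^i$ inside $\omega_X(D) \otimes \sL^i$. Because the generators already lie in $\omega_X \otimes \sL^i$, the subsheaf they generate inside $\omega_X(D) \otimes \sL^i$ is automatically contained in $\omega_X \otimes \sL^i$ and coincides with the subsheaf they generate there. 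Hence the three-term containment of section spaces descends to the claimed three-term containment of sheaves after untwisting by $\sL^i$. The main obstacle is precisely this last identification from \autoref{rem.DefinitionOfAdjGeneralVersion}: a priori $\tau_+$ is a subsheaf of $\omega_X$ while $\adj_+^D$ is a subsheaf of $\omega_X(D)$, and one must verify that both can be compared inside a common ambient sheaf.
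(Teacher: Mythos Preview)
Your arguments for \autoref{lem.BasicFactsAboutTauAndAdj.a} and \autoref{lem.BasicFactsAboutTauAndAdj.b} are correct and essentially identical to the paper's.

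For \autoref{lem.BasicFactsAboutTauAndAdj.c} there is a genuine slip. You invoke \autoref{rem.DefinitionOfAdjGeneralVersion} to identify $\myB^0_D(X, D+B, \omega_X \otimes \sL^i)$ with $\myB^0_D(X, D+B, \omega_X(D) \otimes \sL^i)$, but that remark does not assert this: its two paragraphs describe Matlis duals of images into \emph{different} targets (one with $\nu^*(D+B)$, the other with $\nu^* B$), and there is no reason these coincide. More concretely, by the definition of $\adj_+^D$ the module $\myB^0_D(X, D+B, \omega_X(D) \otimes \sL^i)$ generates $\adj_+^D(\omega_X(D), B) \otimes \sL^i$, not $\adj_+^D(\omega_X(D), D+B) \otimes \sL^i$; the latter is generated by $\myB^0_D(X, 2D+B, \omega_X(D) \otimes \sL^i)$.

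The fix, which is exactly what the paper does, is to use \autoref{lem.SimplePropertiesOfB0_D}\autoref{lem.SimplePropertiesOfB0_D.4} (with $F = D$) rather than \autoref{rem.DefinitionOfAdjGeneralVersion}: this gives
\[
    \myB^0_D(X, D+B, \omega_X \otimes \sL^i) \;\cong\; \myB^0_D(X, 2D+B, \omega_X(D) \otimes \sL^i),
\]
and the right-hand side does generate $\adj_+^D(\omega_X(D), D+B) \otimes \sL^i$. The paper carries out the entire chain inside $H^0(X, \omega_X(D) \otimes \sL^i)$, using \autoref{lem.SimplePropertiesOfB0}\autoref{lem.SimplePropertiesOfB0.3} and \autoref{lem.SimplePropertiesOfB0_D}\autoref{lem.SimplePropertiesOfB0_D.4} to move the two outer $\myB^0$ terms there as well; once corrected, your argument is the same approach viewed from $\omega_X \otimes \sL^i$ instead.
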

    \begin{proof}
        These first part follow directly from the corresponding inclusions of $\myB^0$ and $\myB^0_D$ in \autoref{lem.SimplePropertiesOfB0} and \autoref{lem.SimplePropertiesOfB0_D}.  

        For \autoref{lem.BasicFactsAboutTauAndAdj.b}, fix $M$ sufficiently ample so that $M - F$ is also sufficiently ample.  Notice that $\myB^0(X, B + F, \cO_X(K_X + M))) \subseteq H^0(X, \cO_X(K_X + M))$ globally generates $\tau _+(\omega_X, B+F) \otimes \cO_X(M)$.  However, using \autoref{lem.SimplePropertiesOfB0} \autoref{lem.SimplePropertiesOfB0.3}, $\myB^0(X, B + F, \cO_X(K_X + M))) \cong \myB^0(X, B, \cO_X(K_X + M - F))) \subseteq H^0(X, \cO_X(K_X + M -F))$ also globally generates $\tau_+(\omega_X, B) \otimes \cO_X(M - F)$.  Hence $\tau_+(\omega_X, B) \otimes \cO_X(M - F) = \tau _+(\omega_X, B+F) \otimes \cO_X(M)$ which proves \autoref{lem.BasicFactsAboutTauAndAdj.b} for $\tau_+$.  The version for $\adj_+^D$ is the same except we use \autoref{lem.SimplePropertiesOfB0_D} \autoref{lem.SimplePropertiesOfB0_D.4}.  
        
        We now check \autoref{lem.BasicFactsAboutTauAndAdj.c}.  Observe first that the $R$-module $\myB^0_D(X, B+2D, \cO_X(K + D + M))$ globally generates $\adj^D_+(\omega_X(D), D+B) \otimes \cO_X(M)$ for $M$ sufficiently ample.

        Then we have by \autoref{lem.SimplePropertiesOfB0_D} \autoref{lem.SimplePropertiesOfB0_D.3} and \autoref{lem.SimplePropertiesOfB0_D.4} that 
        \[
            \begin{array}{rl}
                & \myB^0(X, D+B, \cO_X(K_X + M))\\
                = & \myB^0(X, 2D + B, \cO_X(K_X + D + M))\\
                \subseteq & \myB^0_D(X, 2D+B, \cO_X(K_X + D + M))\\
                \subseteq & \myB^0(X, D + (1-\epsilon)D+B, \cO_X(K_X + D + M))\\
                = & \myB^0(X, (1-\epsilon)D+B, \cO_X(K_X + M)).
            \end{array}
        \]
        The first term generates $\tau_+(\omega_X, D+B)$, the last generates $\tau_+(\omega_X, (1-\epsilon)D + B)$ and the third line generates $ \adj_+^D(\omega_X(D), D+B)$.  The result follows.    
    \end{proof}

Before we move on to comparing our test submodule with the multiplier submodule, we show that $\tau_+(\omega_X, B)$ is nonzero.  
\begin{lemma}
    \label{lem.TauIsNonzero}
    With notation as above, $\tau_+(\omega_X, B)$ and $\adj_+^D(\omega_X( D),  B)$ are nonzero.  
\end{lemma}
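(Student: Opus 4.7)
The plan is to induct on $d = \dim X$, proving $\tau_+(\omega_X, B') \neq 0$ for every effective $\bQ$-divisor $B'$ on $X$; the assertion for $\adj_+^D(\omega_X(D), B)$ will then follow by combining \autoref{lem.BasicFactsAboutTauAndAdj.a} and \autoref{lem.BasicFactsAboutTauAndAdj.c} applied with boundary $D + B$, giving
\[
    \tau_+(\omega_X, D + B) \;\subseteq\; \adj_+^D(\omega_X(D), D + B) \;\subseteq\; \adj_+^D(\omega_X(D), B).
\]
The base case $d = 0$ will be immediate: $\omega_X \cong \cO_X$, every line bundle is trivial, and the unit section visibly lifts through every finite cover so $\myB^0$ is all of $H^0(X,\omega_X)$.

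For the inductive step, I would fix an effective $\bQ$-divisor $B'$ and choose a prime effective Cartier divisor $D \in |nL|$ for some $n \gg 0$, taken general enough that $D$ is integral and $\Diff_{D^N}(D + B')$ is an effective $\bQ$-divisor on the normalization $D^N$. For $i \gg 0$, the divisor $iL - D - B'$ is big and semi-ample on $X$, and after possibly replacing $iL$ by $iL + F$ for an appropriate effective Cartier $F$ (per \autoref{rem.MakingKX+MCartier} or the remark following \autoref{thm.LiftingSections}), we may arrange that $K_X + iL$ is Cartier. Applying \autoref{thm.LiftingSections} then yields a surjection
\[
    \myB^0_D\bigl(X,\, D + B',\, \cO_X(K_X + iL)\bigr) \twoheadrightarrow \myB^0\bigl(D^N,\, \Diff_{D^N}(D + B'),\, \cO_{D^N}(K_{D^N} + (iL)|_{D^N})\bigr).
\]
Here $D^N$ is normal, integral, and projective over $R$ of dimension $d - 1$, with $\sL|_{D^N}$ ample; hence the inductive hypothesis gives $\tau_+(\omega_{D^N}, \Diff_{D^N}(D + B')) \neq 0$, and \autoref{prop.B0ForBigiIsGlobalSections} applied on $D^N$ then forces the right-hand $\myB^0$ to be nonzero for $i \gg 0$. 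Thus $\myB^0_D(X, D+B', \cO_X(K_X + iL)) \neq 0$, so $\adj_+^D(\omega_X(D), D + B') \neq 0$. Finally \autoref{lem.BasicFactsAboutTauAndAdj.c} with $\epsilon = 1$ provides $\adj_+^D(\omega_X(D), D + B') \subseteq \tau_+(\omega_X, B')$, closing the induction.

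The main obstacle in this plan is the Bertini-style input at each inductive step: producing a prime $D \in |nL|$ with $D$ integral and $\Diff_{D^N}(D + B')$ effective, while working over a mixed characteristic complete local base. In characteristic zero and positive characteristic this is standard for $n \gg 0$, and analogous statements are expected to transfer to mixed characteristic for sufficiently ample $\sL$. If edge cases interfere, I would first pass to a finite \'etale base change via \autoref{prop.FiniteEtalExtensionOfRAndB^0} to enlarge the residue field of $R$, or construct $D$ as a pullback from a generically finite cover of $X$ and verify directly that the hypotheses of \autoref{thm.LiftingSections} continue to hold in that setting.
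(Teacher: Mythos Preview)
Your approach is genuinely different from the paper's, and the gap you identify is real.

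The paper does not induct on dimension at all. Instead, after reducing to $B = 0$ via \autoref{lem.SimplePropertiesOfB0}, it passes to the section ring $S = \bigoplus_{i \geq 0} H^0(X, \sL^i)$ and identifies $\myB^0(X, 0, \cO_X(K_X + iL))$ with the $i$th graded piece of $\tau_{S^{+,\gr}}(\omega_S)$ using \cite[Proposition 5.5]{BMPSTWW-MMP}. Nonvanishing then reduces to the affine statement $\tau_{\widehat{S^{+,\gr}}}(\omega_{\widehat S}) \neq 0$, which follows from \cite[Theorem 5.13]{MaSchwedeSingularitiesMixedCharBCM} via a Noether normalization furnished by the Cohen structure theorem. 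The adjoint case is deduced from the containment $\myB^0 \subseteq \myB^0_D$ exactly as you do. This route is short precisely because it outsources the hard work to an existing local result and completely avoids any Bertini input.

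Your inductive strategy via \autoref{thm.LiftingSections} is natural, and the containment $\adj_+^D(\omega_X(D), D+B') \subseteq \tau_+(\omega_X, B')$ you extract from \autoref{lem.BasicFactsAboutTauAndAdj}\autoref{lem.BasicFactsAboutTauAndAdj.c} with $\epsilon = 1$ is correct. But the Bertini step is not a technicality you can wave away: over a complete local base of mixed characteristic there is no off-the-shelf statement guaranteeing that a general member of $|nL|$ is integral, or even reduced. Your proposed fixes do not close this: a finite \'etale extension of $R$ enlarges the residue field only to a finite extension, so you still cannot appeal to ``general over an infinite field,'' and pulling back from a cover does not obviously produce a reduced Cartier divisor on $X$ itself. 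A secondary point you glossed over is that \autoref{thm.LiftingSections} requires $M - D - B$ to be $\bQ$-Cartier, hence $B'$ must be $\bQ$-Cartier; this is easily repaired by first enlarging $B'$ to a Cartier divisor in some $|mL|$ using \autoref{lem.BasicFactsAboutTauAndAdj}\autoref{lem.BasicFactsAboutTauAndAdj.a}, but it should be said. If you want to salvage the inductive approach, you would need to either prove the relevant Bertini statement in this setting or find a single reduced ample Cartier divisor by hand; neither is immediate.
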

\begin{proof}
    We may assume that $X$ has mixed characteristic because if $X$ can be defined in characteristic $p > 0$, the result is well known.

    For the first statement, it suffices to show that $\myB^0(X, B, \omega_X \otimes \sL^i) \neq 0$ for $i \gg 0$.  Choose $D \geq B$ so that $D \sim m L$ for some $m > 0$.  It suffices to show that $0 \neq \myB^0(X, D, \omega_X \otimes \cO_X(iL))$ by \autoref{lem.SimplePropertiesOfB0} \autoref{lem.SimplePropertiesOfB0.1}.  Now, by \autoref{lem.SimplePropertiesOfB0} \autoref{lem.SimplePropertiesOfB0.2} and \autoref{lem.SimplePropertiesOfB0.3} we have that 
    \[
        \myB^0(X, 0, \cO_X(K_X + (i-m)L)) \cong \myB^0(X, 0, \cO_X(K_X + iL - D)) \cong \myB^0(X, D, \cO_X(K_X + iL))
    \]
    and so we must show that $\myB^0(X, 0, \cO_X(K_X + (i-m)L)) \neq 0$.  Thus, since $i \gg 0$, we must show that $\myB^0(X, 0, \cO_X(K_X + iL)) \neq 0$.  
    
    If $S = \bigoplus_{i \geq 0} H^0(X, \sL^i)$ is the section ring with respect to $\sL$, using \cite[Proposition 5.5]{BMPSTWW-MMP} we see that 
    \[
        \myB^0(X, 0, \cO_X(K_X + iL)) = [\tau_{S^{+,\gr}}(\omega_S)]_i
    \]
    where $\tau_{S^{+,\gr}}(\omega_S)$ is defined as in \cite[Section 5]{BMPSTWW-MMP}.  This is equivalent to showing that the map $H^{d+1}_{\fram_S}(S) \to H^{d+1}_{\fram_S}(S^{+,\gr})$ is nonzero.  
    It is thus sufficient to show that $\tau_{\widehat{S^{+,\gr}}}(\omega_{\widehat{S}}) \neq 0$ 
    since this is nonzero if and only if the map on local cohomology above is nonzero (this object is defined in \cite[Section 5]{MaSchwedeSingularitiesMixedCharBCM}).  
    Because we can take a Noether normalization $A \subseteq \widehat{R}$ via the Cohen-Structure theorem, we have that $\tau_{\widehat{S^{+,\gr}}}(\omega_{\widehat{S}}) \neq 0$ by \cite[Theorem 5.13]{MaSchwedeSingularitiesMixedCharBCM}.

    For the statement regarding $\adj_+^D(\omega_X(D), B)$,
    notice that 
    \[
        \begin{array}{rl}
            0 \neq & \myB^0(X, B+D, \omega_X \otimes \sL^i) \\
            \subseteq & \myB^0_D(X, B+D, \omega_X \otimes \sL^i)\\
             = & \myB^0_D(X, B+2D, \omega_X(D) \otimes \sL^i) \\
             \subseteq & \myB^0_D(X, B+D, \omega_X(D) \otimes \sL^i)
        \end{array}
    \]
    by \autoref{lem.SimplePropertiesOfB0_D}.  The latter generates $\adj_+^D(\omega_X(D), B)$.
\end{proof}

    \begin{remark}
        Notice that in the case that $B= 0$ and $X$ is nonsingular, we are not (yet) asserting that $\tau_+(\omega_X) = \omega_X$.  This is true, and follows from \autoref{prop.tauAgreesWithOmegaOnSNCLocus}, or from \cite[Theorem 5.8]{BMPSTWW-MMP}, but is not obvious based on our definitions.  
    \end{remark}

\subsection{Blowups and alterations}

    Our next goal is to discuss the behavior of  $\tau_+$ under proper birational maps, or even alterations and to relate $\tau_+$  with the multiplier ideal if a resolution of singularities exists.

    \begin{proposition}\label{p-tau}
        With notation as above, suppose additionally that $B$ is $\bQ$-Cartier.  For any alteration $\pi : Y \to X$, we have that 
        \[
            \tau_+(\omega_X, B) \subseteq \Tr \pi_* \cO_Y(K_Y -\lfloor \pi^* B \rfloor)
        \]
        where $\Tr : \pi_* \omega_Y \to \omega_X$ is the Grothendieck trace.
    \end{proposition}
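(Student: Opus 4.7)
The plan is to reduce the sheaf-theoretic inclusion to a statement about global sections by twisting with a sufficiently high power of a very ample line bundle $\sL$, and then apply the alterations characterization of $\myB^0$. By \autoref{prop.B0ForBigiIsGlobalSections}, for $i \gg 0$ the sheaf $\tau_+(\omega_X, B) \otimes \sL^i$ is globally generated by $\myB^0(X, B, \omega_X \otimes \sL^i)$, viewed as a submodule of $H^0(X, \omega_X \otimes \sL^i)$. Since the target $\Tr \pi_* \cO_Y(K_Y - \lfloor \pi^* B \rfloor) \otimes \sL^i$ is itself a subsheaf of $\omega_X \otimes \sL^i$, any submodule generated by sections lying in $H^0$ of the target is automatically contained in the target; thus it suffices to show that the image of $\myB^0(X, B, \omega_X \otimes \sL^i)$ lies in $H^0\bigl(X, \Tr \pi_* \cO_Y(K_Y - \lfloor \pi^* B \rfloor) \otimes \sL^i\bigr)$.

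Next I would invoke the alterations version of \autoref{lem.BasicPropertiesOfB^0AndB^0_D}. Because $L$ is Cartier and $B$ is $\bQ$-Cartier, the divisor $iL - B$ is $\bQ$-Cartier, so that formulation applies: every section in $\myB^0(X, B, \omega_X \otimes \sL^i)$ lies in the image of
\[
H^0\bigl(Y, \cO_Y(K_Y + \lceil \pi^*(iL - B) \rceil)\bigr) \longrightarrow H^0(X, \omega_X \otimes \sL^i).
\]
Since $\pi^*(iL)$ has integer coefficients, $\lceil \pi^*(iL - B) \rceil = i\pi^* L - \lfloor \pi^* B \rfloor$, so the source rewrites as $H^0\bigl(Y, \cO_Y(K_Y - \lfloor \pi^* B \rfloor) \otimes \pi^* \sL^i\bigr)$.

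Finally, I would use the projection formula together with the Grothendieck trace. Since $K_Y - \lfloor \pi^* B\rfloor \leq K_Y$, the inclusion $\cO_Y(K_Y - \lfloor \pi^* B \rfloor) \hookrightarrow \omega_Y$ composed with the trace $\pi_* \omega_Y \to \omega_X$ produces a map $\pi_* \cO_Y(K_Y - \lfloor \pi^* B \rfloor) \to \omega_X$ whose image is by definition $\Tr \pi_* \cO_Y(K_Y - \lfloor \pi^* B \rfloor)$. Tensoring with $\sL^i$ and taking $H^0$ identifies the factored map as landing in $H^0\bigl(X, \Tr \pi_* \cO_Y(K_Y - \lfloor \pi^* B \rfloor) \otimes \sL^i\bigr)$, which is exactly the required containment.

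The main subtle point is the application of the alterations clause of \autoref{lem.BasicPropertiesOfB^0AndB^0_D}, which is precisely why the hypothesis that $B$ is $\bQ$-Cartier is imposed; without it one only has the finite-cover characterization, which does not directly reach a given alteration $\pi$. A minor bookkeeping issue is the rounding identity $\lceil \pi^*(iL - B) \rceil = i\pi^* L - \lfloor \pi^* B \rfloor$, valid because $\pi^*(iL)$ is an integral Cartier pullback, which lets the integer part $i\pi^*L$ be absorbed into the twist by $\pi^* \sL^i$ via the projection formula.
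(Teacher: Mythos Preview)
Your proposal is correct and follows essentially the same approach as the paper: reduce to global sections via \autoref{prop.B0ForBigiIsGlobalSections}, then use the alterations clause of \autoref{lem.BasicPropertiesOfB^0AndB^0_D} (which requires $B$ $\bQ$-Cartier) to see that $\myB^0(X, B, \omega_X \otimes \sL^i)$ lands in the image of $H^0(Y, \cO_Y(K_Y - \lfloor \pi^* B\rfloor) \otimes \pi^*\sL^i)$ under trace, hence in $H^0\big(X, (\Tr \pi_* \cO_Y(K_Y - \lfloor \pi^* B\rfloor)) \otimes \sL^i\big)$. The paper's write-up is more compressed, but your additional remarks on the rounding identity and the projection formula are exactly the details being used.
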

    \begin{proof}
        By \autoref{prop.B0ForBigiIsGlobalSections}, it suffices to show that 
        \[
            \myB^0(X, B, \omega_X \otimes \sL^i) \subseteq H^0\big(X, (\Tr \pi_* \cO_Y(K_Y-\lfloor \pi^* B \rfloor)) \otimes \sL^i\big)
        \]
        for $i \gg 0$ and $\sL$ ample.  But for any $i$, 
        \[
            \begin{array}{rl}
                & \myB^0(X, B, \omega_X \otimes \sL^i)\\
                 \subseteq & \Image\Big(H^0(X, (\pi_* \cO_Y(K_Y - \lfloor \pi^* B \rfloor)) \otimes \sL^i ) \xrightarrow{\Tr}  H^0(X, \omega_X \otimes \sL^i) \Big)\\
                \subseteq & H^0\big(X, (\Tr \pi_* \cO_Y(K_Y-\lfloor \pi^* B \rfloor)) \otimes \sL^i\big).
            \end{array}
        \]
        which completes the proof.
    \end{proof}   

    Localizing to characteristic zero, we obtain the following.
    
    \begin{corollary}
        \label{cor.Tau+InvertPIsInMultiplier}
        Fix notation as in \autoref{p-tau}, including that $B$ is $\mathbb Q$-Cartier.  Let $R' = R[1/p]$, so for instance if $R$ is a mixed characteristic DVR, then $R'$ is a field of characteristic zero.
        Then 
        \[ 
            \tau_+(\omega_X, B)|_{X_{R'}}\subseteq \mathcal J(\omega_{X_{R'}} ,B |_{X_{R'}}) = \mathcal J(X_{R'} ,(-K_X +B) |_{X_{R'}})
        \]    
        where $X_{R'}$ is the base change.
    \end{corollary}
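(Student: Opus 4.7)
The plan is to combine Proposition~\ref{p-tau} with resolution of singularities (available in characteristic zero, after inverting $p$) and the standard description of the multiplier submodule on a log resolution.

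First, I would fix a log resolution $\pi_0 \colon Y_0 \to X_{R'}$ of the pair $(X_{R'}, B|_{X_{R'}})$; by Hironaka, $\pi_0$ is projective birational, $Y_0$ is regular, and $\mathrm{Exc}(\pi_0) \cup \pi_0^{-1}(\mathrm{Supp}\,B|_{X_{R'}})$ has simple normal crossing support. The usual definition of the multiplier submodule then reads
\[
    \mathcal J(\omega_{X_{R'}}, B|_{X_{R'}}) \;=\; \pi_{0*}\cO_{Y_0}\bigl(K_{Y_0} - \lfloor \pi_0^* B|_{X_{R'}} \rfloor\bigr),
\]
viewed as a subsheaf of $\omega_{X_{R'}}$ via the Grothendieck trace, and the stated equality with $\mathcal J(X_{R'}, (-K_X+B)|_{X_{R'}})$ is the standard correspondence between multiplier ideals and multiplier submodules of $\omega$.

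Second, I would spread $\pi_0$ out to an alteration $\pi \colon Y \to X$ over $\Spec R$: take $Y$ to be the normalization of the scheme-theoretic closure of the graph of $\pi_0$ inside $Y_0 \times_{R'} X_{R'}$, viewed inside a suitable compactification over $\Spec R$, and let $\pi$ be the induced map to $X$. Then $Y$ is integral, normal, and proper over $R$ (hence over $X$), while $\pi$ is generically finite; thus $\pi$ is an alteration in the sense of Proposition~\ref{p-tau}. By construction the base change satisfies $Y \times_R R' \cong Y_0$ and $\pi \times_R R' = \pi_0$.

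Third, I apply Proposition~\ref{p-tau} to get
\[
    \tau_+(\omega_X, B) \;\subseteq\; \Tr\bigl(\pi_*\cO_Y(K_Y - \lfloor \pi^* B \rfloor)\bigr) \;\subseteq\; \omega_X.
\]
Since $R \to R'$ is a localization, hence flat, restricting to $X_{R'}$ preserves the inclusion and commutes with the pushforward $\pi_*$; using step two and step one we obtain
\[
    \tau_+(\omega_X, B)|_{X_{R'}} \;\subseteq\; \Tr\bigl(\pi_{0*}\cO_{Y_0}(K_{Y_0} - \lfloor \pi_0^* B|_{X_{R'}} \rfloor)\bigr) \;=\; \mathcal J(\omega_{X_{R'}}, B|_{X_{R'}}),
\]
which is the claim.

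The main technical obstacle is the spreading-out in step two: one needs to verify that the model so produced is genuinely an integral normal scheme proper over $\Spec R$ whose base change to $\Spec R'$ recovers the chosen log resolution $\pi_0$. This is not deep, but it requires some care, since any pathologies in the special fibre over $R/(p)$ must be absorbed by normalization (and can be further improved by additional blow-ups if necessary); crucially, any such modifications over the closed fibre are irrelevant after inverting $p$, so the identification of the generic fibre with $\pi_0$ is preserved.
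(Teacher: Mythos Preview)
Your proposal is correct and follows essentially the same approach as the paper: apply Proposition~\ref{p-tau} to a model $\pi\colon Y\to X$ over $R$ whose base change to $R'$ is a log resolution of $(X_{R'},B|_{X_{R'}})$, then restrict to $X_{R'}$ and identify the pushforward with the multiplier submodule. The paper simply asserts the existence of such a proper birational $\pi$ from a normal $Y$ (rather than building a log resolution over $R'$ first and spreading out), which sidesteps the technical obstacle you flag; since $\pi$ is birational the trace is just the inclusion, and the argument is otherwise identical to yours.
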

    \begin{proof} 
        Fix $\pi: Y \to X$ a proper birational map from a normal $Y$ such that $Y_{R'} \to X_{R'}$ is a log resolution of singularities.  Arguing as above, and using that $\Tr$ is simply an inclusion since $\pi$ is birational, we have 
        \[
            \tau_+(\omega_X, B)|_{X_{R'}}\subseteq 
            \Tr \pi_* \cO_Y(K_Y -\lfloor \pi^* B \rfloor)|_{X_{R'}} =
            (\pi|_{Y_{R'}})_* \cO_{Y_{R'}}(K_{Y_{R'}} -\lfloor \pi^* B|_{X_{R'} } \rfloor).
        \]
        The right side is $\mathcal{J}(\omega_{X_{R'}} ,B |_{X_{R'}})$ by definition.
    \end{proof}

    One expects that the containment in the statement of \autoref{cor.Tau+InvertPIsInMultiplier} is equality.  In fact, we have learned that forthcoming work by the authors of \cite{BMPSTWW-MMP} will address related questions.

    \autoref{p-tau} implies the following, also compare with \cite[Proposition 3.3]{MustataNonNefLocusPositiveChar}.

    \begin{lemma} 
        For every irreducible closed subset $Z\subseteq X$ such that the generic point $\eta _Z$ is contained in the regular locus of $X$ and any  $\lambda >0$ we have
        \[ 
            {\rm ord}_Z (\tau_+(\omega_X, \lambda \Delta)) > \lambda \cdot {\rm ord}_Z(\Delta )-{\rm codim}(Z,X).
        \]
    \end{lemma}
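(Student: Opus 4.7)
The plan is to deduce the bound from the alteration inclusion of \autoref{p-tau} applied to a proper birational model that exhibits $Z$ as the image of a prime divisor with the expected relative discrepancy $c-1$, where $c := \codim(Z,X)$.

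The statement is local at $\eta_Z$, so first I would shrink $X$ to an affine open neighborhood of $\eta_Z$ on which $X$ is regular, and fix a local generator $\omega_0$ of $\omega_X$ giving a trivialization $\omega_X \cong \cO_X$ in this neighborhood. Under this identification we may interpret ${\rm ord}_Z(\tau_+(\omega_X, \lambda\Delta))$ as the $\frm_{\eta_Z}$-adic order of the corresponding ideal. Note that $\Delta$ is automatically $\bQ$-Cartier near $\eta_Z$ (since $X$ is regular there), so \autoref{p-tau} applies.

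Next, I would choose a proper birational $\pi : Y \to X$ from a normal $Y$, carrying a prime divisor $E \subseteq Y$ whose center on $X$ is $\overline{Z}$, such that: (a) the coefficient of $E$ in the relative canonical divisor $K_{Y/X}$ equals $c - 1$; and (b) the divisorial valuation ${\rm ord}_E$ on $K(Y) = K(X)$, restricted to $\cO_{X, \eta_Z}$, agrees with the $\frm_{\eta_Z}$-adic order ${\rm ord}_{\eta_Z}$. Locally near $\eta_Z$ both conditions are realized by the blowup of the maximal ideal $\frm_{\eta_Z}$ of the regular local ring $\cO_{X, \eta_Z}$, whose exceptional divisor is $\bP^{c - 1}$; this local construction can then be extended to a proper birational modification of $X$.

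Applying \autoref{p-tau} gives $\tau_+(\omega_X, \lambda\Delta) \subseteq \Tr \pi_* \cO_Y(K_Y - \lfloor \lambda \pi^*\Delta\rfloor)$. A local section of the right-hand side, written $f\omega_0$ for a rational function $f$, must satisfy ${\rm ord}_E(f) + (c-1) \geq \lfloor \lambda \pi^*\Delta\rfloor_E = \lfloor \lambda \cdot {\rm ord}_Z(\Delta)\rfloor$ by (a), and hence by (b),
\[
    {\rm ord}_Z(f) = {\rm ord}_E(f) \geq \lfloor \lambda \cdot {\rm ord}_Z(\Delta)\rfloor - (c-1) > \lambda \cdot {\rm ord}_Z(\Delta) - c,
\]
using the elementary bound $\lfloor x \rfloor > x - 1$. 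Taking the infimum over such local sections yields the claim. The main technical point I expect is the bookkeeping in Step 2: verifying conditions (a) and (b) for the divisor $E$ extracted from the blowup of $\frm_{\eta_Z}$, and matching up the coefficient of $E$ in $\pi^*\Delta$ with ${\rm ord}_Z(\Delta)$ under the trivialization $\omega_X \cong \cO_X$. Both are standard computations for blowups of regular local rings, but deserve care.
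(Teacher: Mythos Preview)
Your proposal is correct and follows essentially the same approach as the paper: both apply \autoref{p-tau} to the blowup along $Z$ (after localizing near $\eta_Z$ so that $Z$ is regular), use that $K_{Y/X} = (c-1)E$ and that the coefficient of $E$ in $\pi^*\Delta$ is ${\rm ord}_Z(\Delta)$, and conclude via $\lfloor x \rfloor > x-1$. The paper phrases the conclusion in terms of the symbolic power $\mathcal I_Z^{(p)}$ while you work directly with the valuation ${\rm ord}_E = {\rm ord}_Z$, but these are the same computation.
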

    \begin{proof} Since, as observed above, $\tau_+(\omega_X,  \lambda \cdot\Delta) \subseteq \Tr \pi_* \cO_Y(K_Y -\lfloor  \lambda \cdot \pi^* \Delta \rfloor)$, we may consider $\pi: Y\to X$ the blow up of $X$ along $Z$ and $E$ the exceptional divisor.
    We may restrict to an open subset of $X$ containing $\eta _Z$ so that we may assume that $Z$ is regular. If $c={\rm codim}(Z,X)$, then $K_{Y/X}=(c-1)E$ and $\lfloor \lambda \cdot \pi^* \Delta \rfloor \geq \lfloor \lambda \cdot {\rm ord}_Z(\Delta ) \rfloor E$. But then, using the fact that on this open subset we can assume $K_X=0$, we have 
    \[\Tr \pi_* \cO_Y(K_Y -\lfloor  \lambda \cdot \pi^* \Delta \rfloor)\subseteq \Tr \pi_* \cO_Y\left( ((c-1)-\lfloor \lambda \cdot {\rm ord}_Z(\Delta ) \rfloor )E\right)=\mathcal I _Z^{(p)},\]
    where $p=\lfloor \lambda \cdot {\rm ord}_Z(\Delta ) \rfloor -(c-1)>\lambda \cdot {\rm ord}_Z(\Delta )-c$, and the claim follows.
    \end{proof}

\subsection{$\bigplus$-test \emph{ideals}}
    In this subsection, we define $\bigplus$-test \emph{ideals} $\tau_+(\cO_X, \Delta) \subseteq \cO_X$ instead of submodules of $\omega_X$.  Indeed, by \autoref{lem.BasicFactsAboutTauAndAdj} \autoref{lem.BasicFactsAboutTauAndAdj.b}  that for $H \geq 0$ a Cartier divisor 
    \[
        \tau_+(\omega_X, B + H) = \tau_+(\omega_X, B) \otimes \cO_X(-H).
    \]
    Inspired by this, we make the following definition.

    \begin{definition}[$\tau_+(\omega_X)$ version 2]
        \label{def.Tau+V2}
        Suppose that $B$ is an arbitrary $\bQ$-divisor (not necessarily effective).  We define the \emph{$\bigplus$-test submodule} $\tau_+(\omega_X, B)$ to be $\tau_+(\omega_X, B+ H) \otimes \cO_X(H)$ where $H$ is a Cartier divisor such that $H + B\geq 0$.  

        Likewise we define $\adj_+^D(\omega_X(D), B) = \adj_+^D(\omega_X(D), B+H) \otimes \cO_X(H)$.
        It is straightforward to verify that these definitions are independent of the choice of $H$.
    \end{definition}

    Notice that if $\Delta$ is non-effective, we can have $\tau_+(\omega_X, \Delta) \not\subseteq \omega_X$.  We use this to obtain the following notion of test \emph{ideal}.

    \begin{definition}
        \label{def.Tau+Ideal}
        With $X$ as above, fix $\Delta \geq 0$ a $\bQ$-divisor.  Fix a canonical divisor $K_X$ and write $\omega_X = \cO_X(K_X)$. We then define the test ideal to be:
        \[
            \tau_+(\cO_X, \Delta) := \tau_+(\omega_X, K_X + \Delta).
        \]                
        if $\Delta = 0$ then we write $\tau_+(\cO_X)$ for $\tau_+(\cO_X, 0)$.  
        Likewise, if $D$ is reduced, we write:
        \[
            \adj_+^D(\cO_X, \Delta) = \adj^D_+(\omega_X(D), K_X + D + \Delta).
        \]
    \end{definition}
    \begin{remark}
        This notation conflicts slightly with the notation of \cite{MaSchwedeTuckerWaldronWitaszekAdjoint}.  There, in the case that $X = \Spec R$, $\adj^D_{R^+ \shortrightarrow (R/I_D)^+}(R, D+\Delta)$ was used to denote the object we are calling $\adj_+^D(\cO_X, \Delta)$.
    \end{remark}

   
    \begin{lemma}
        \label{lem.CleanGlobalGenerationOfTestIdeals}
        With $X$ and $\Delta$ as above, suppose that $\sM = \cO_X(M)$ is a sufficiently ample line bundle, then 
        $\myB^0(X, \Delta, \sM) \subseteq H^0(X, \sM)$ generates $\tau_+(\cO_X, \Delta) \otimes \sM$.

        Likewise, if $D$ is a reduced divisor then $\myB^0_D(X, D+\Delta, \sM)$ generates $\adj^D_+(\cO_X, \Delta) \otimes \sM$.  
    \end{lemma}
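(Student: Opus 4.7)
The plan is to reduce to the ``effective boundary'' global generation statement of Proposition \autoref{prop.Tau+IndependentOfAmple}, and then translate between the various $\myB^0$ modules using the bookkeeping isomorphism from Lemma \autoref{lem.SimplePropertiesOfB0} \autoref{lem.SimplePropertiesOfB0.3}. The central task is to choose the auxiliary Cartier divisor in Definition \autoref{def.Tau+V2} so that every divisor that needs to be effective actually is.

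First I would write $\sM = \cO_X(K_X + M')$ for some Weil divisor $M'$ and choose an effective Cartier divisor $H$ with both $K_X + \Delta + H \geq 0$ and $K_X + H \geq 0$; such an $H$ exists by taking a sufficiently large multiple of any fixed very ample divisor. By Definitions \autoref{def.Tau+Ideal} and \autoref{def.Tau+V2},
\[
  \tau_+(\cO_X, \Delta) \otimes \sM \;=\; \tau_+(\omega_X, K_X + \Delta + H) \otimes \cO_X(H) \otimes \sM.
\]

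Next, setting $\sA := \cO_X(H) \otimes \sM$, for $\sM$ sufficiently ample $\sA$ is also sufficiently ample, so Proposition \autoref{prop.Tau+IndependentOfAmple} applied to the effective $\bQ$-divisor $B = K_X + \Delta + H$ and the ample line bundle $\sA$ yields that $\tau_+(\cO_X, \Delta) \otimes \sM$ is generated by $\myB^0(X, K_X + \Delta + H, \omega_X \otimes \sA)$. Since $\omega_X \otimes \sA = \cO_X(K_X + (K_X + H) + M')$, I would apply Lemma \autoref{lem.SimplePropertiesOfB0} \autoref{lem.SimplePropertiesOfB0.3} with effective boundary $\Delta$ and effective Weil divisor $F = K_X + H$ to obtain
\[
  \myB^0(X, K_X + \Delta + H, \omega_X \otimes \sA) \;\cong\; \myB^0(X, \Delta, \cO_X(K_X + M')) \;=\; \myB^0(X, \Delta, \sM),
\]
completing the proof for $\tau_+(\cO_X, \Delta)$. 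The statement for $\adj_+^D(\cO_X, \Delta)$ is entirely parallel, using the $\adj_+^D$-version of Proposition \autoref{prop.Tau+IndependentOfAmple} together with Lemma \autoref{lem.SimplePropertiesOfB0_D} \autoref{lem.SimplePropertiesOfB0_D.4} in place of their $\tau_+$-counterparts.

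The main obstacle is purely bookkeeping: we must ensure that $F = K_X + H$ is an effective Weil divisor so that Lemma \autoref{lem.SimplePropertiesOfB0} \autoref{lem.SimplePropertiesOfB0.3} actually applies, which is why $H$ must be chosen to dominate both $-K_X - \Delta$ and $-K_X$ rather than only the first of these as in the definition of $\tau_+(\omega_X, K_X + \Delta)$. Beyond that, the argument is just a careful tracking of the divisor/line-bundle dictionary.
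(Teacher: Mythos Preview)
Your proof is correct and follows essentially the same approach as the paper: choose a Cartier divisor $H$ with $K_X + H \geq 0$, apply the global generation of \autoref{prop.Tau+IndependentOfAmple} to the effective boundary $K_X + \Delta + H$, and then use \autoref{lem.SimplePropertiesOfB0}\autoref{lem.SimplePropertiesOfB0.3} with $F = K_X + H$ to identify the resulting $\myB^0$ with $\myB^0(X,\Delta,\sM)$. Note that your two conditions on $H$ collapse to one, since $\Delta \geq 0$ forces $K_X + H \geq 0 \Rightarrow K_X + \Delta + H \geq 0$.
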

    \begin{proof}
        For the first statement, fix a Cartier divisor $H \geq 0$ so that $K_X + H \geq 0$ and choose $L$ sufficiently ample (so that $L - H$ is also sufficiently ample).  We know that 
        \[
            \tau_+(\cO_X, \Delta) \otimes \cO_X(L-H) = \tau_+(\omega_X, K_X + \Delta) \otimes \cO_X(L-H) = \tau_+(\omega_X, K_X + \Delta + H) \otimes \cO_X(L)
        \] 
        is globally generated by $\myB^0(X, K_X + \Delta + H, \cO_X(K_X + L))$.  
        
        Choose $M = L - H$, then by \autoref{lem.SimplePropertiesOfB0} \autoref{lem.SimplePropertiesOfB0.3} 
        \[            
                \myB^0(X, K_X + \Delta + H, \cO_X(K_X + L)) = \myB^0(X, \Delta, \cO_X(K_X + L - K_X - H))
                = \myB^0(X, \Delta, \cO_X(M)).            
        \]
        The first result follows.

        The second statement is similar.  We have that 
        \[            
            \adj^D_+(\cO_X, \Delta) \otimes \cO_X(L-H) = \adj_+^D(\omega_X(D), K_X + D + \Delta + H) \otimes \cO_X(L)        
        \] 
        is globally generated by $\myB^0_D(X, K_X + 2D + \Delta + H, \cO_X(K_X + D + L))$.  But setting $M = L - H$, we have
        \[
            \myB^0_D(X, K_X + 2D + \Delta + H, \cO_X(K_X + D + L)) = \myB^0_D(X, D+\Delta, \cO_X(M))
        \]
        which completes the proof.
    \end{proof}

    \begin{lemma}
        With notation as in \autoref{def.Tau+Ideal} and assuming that $\Delta \geq 0$, we have that $\tau_+(\cO_X, \Delta)$ is an ideal sheaf.  Likewise, $\adj_+^D(\cO_X, \Delta) \subseteq \cO_X$.
    \end{lemma}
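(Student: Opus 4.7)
The plan is to deduce this almost immediately from the global generation result of \autoref{lem.CleanGlobalGenerationOfTestIdeals}. The point is that to check $\tau_+(\cO_X, \Delta)$ is a subsheaf of $\cO_X$, it suffices to exhibit an invertible sheaf $\sM$ such that $\tau_+(\cO_X, \Delta) \otimes \sM$ is visibly a subsheaf of $\sM$, since tensoring by an invertible sheaf is exact and therefore preserves inclusions.

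First I would choose $\sM = \cO_X(M)$ to be a sufficiently ample line bundle as in \autoref{lem.CleanGlobalGenerationOfTestIdeals}. That lemma provides a surjection
\[
    \cO_X \otimes_R \myB^0(X, \Delta, \sM) \twoheadrightarrow \tau_+(\cO_X, \Delta) \otimes \sM,
\]
where $\myB^0(X, \Delta, \sM) \subseteq H^0(X, \sM)$ by the very definition in \autoref{def.+StableSections}. Consequently the image of evaluation $\cO_X \otimes_R H^0(X, \sM) \to \sM$ contains the image of $\cO_X \otimes_R \myB^0(X, \Delta, \sM)$, which means that $\tau_+(\cO_X, \Delta) \otimes \sM$ sits as a subsheaf of $\sM$. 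Tensoring with $\sM^{-1}$ then yields the desired inclusion $\tau_+(\cO_X, \Delta) \subseteq \cO_X$.

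For the adjoint variant, I would apply the second half of \autoref{lem.CleanGlobalGenerationOfTestIdeals} in exactly the same way, using that $\myB^0_D(X, D + \Delta, \sM) \subseteq H^0(X, \sM)$ generates $\adj_+^D(\cO_X, \Delta) \otimes \sM$ for $\sM$ sufficiently ample. Tensoring this inclusion with $\sM^{-1}$ gives $\adj_+^D(\cO_X, \Delta) \subseteq \cO_X$.

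There is essentially no obstacle here — the whole content is packaged in \autoref{lem.CleanGlobalGenerationOfTestIdeals}, which in turn rests on \autoref{lem.BasicFactsAboutTauAndAdj} \autoref{lem.BasicFactsAboutTauAndAdj.b} together with \autoref{lem.SimplePropertiesOfB0} \autoref{lem.SimplePropertiesOfB0.3}. The mild conceptual point worth flagging is that the definition of $\tau_+(\cO_X, \Delta)$ via $\tau_+(\omega_X, K_X + \Delta)$ (\autoref{def.Tau+Ideal}) does not \emph{a priori} realize this object inside $\cO_X$ — it is only after passing through the Cartier twist of \autoref{def.Tau+V2} and the global generation statement that the canonical embedding into $\cO_X$ becomes manifest. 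The hypothesis $\Delta \geq 0$ is used exactly to ensure that $\myB^0(X, \Delta, \sM)$ is defined (so that \autoref{lem.CleanGlobalGenerationOfTestIdeals} applies directly), and similarly for the adjoint case.
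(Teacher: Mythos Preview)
Your proposal is correct and follows essentially the same approach as the paper: both invoke \autoref{lem.CleanGlobalGenerationOfTestIdeals} to see that $\myB^0(X, \Delta, \sM) \subseteq H^0(X, \sM)$ globally generates $\tau_+(\cO_X, \Delta) \otimes \sM$, hence $\tau_+(\cO_X, \Delta) \otimes \sM \subseteq \sM$, and then untwist. Your version simply spells out the untwisting step and the role of $\Delta \geq 0$ in slightly more detail than the paper does.
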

    \begin{proof}
        By \autoref{lem.CleanGlobalGenerationOfTestIdeals} we have that $\myB^0(X, \Delta, \sM) \subseteq H^0(X, \sM)$ globally generates $\tau_+(\cO_X, \Delta) \otimes \sM \subseteq \sM$.  In particular, $\tau_+(\cO_X, \Delta) \subseteq \cO_X$.  The statement for $\adj_+^D(\cO_X, \Delta)$ is similar.
    \end{proof}

    \begin{remark}[The quasi-Gorenstein case]
        If $X$ is quasi-Gorenstein (meaning that $K_X$ is Cartier), then it follows from our definition that $\tau_+(\cO_X, \Delta) = \tau_+(\omega_X, K_X + \Delta) = \tau_+(\omega_X, \Delta) \otimes \cO_X(-K_X)$.  Hence 
        \[
            \tau_+(\cO_X, \Delta) \otimes \omega_X = \tau_+(\omega_X, \Delta).  
        \]
    \end{remark}

    We have the following containments.
    \begin{lemma}
        \label{lem.SimpleAdjointTestContainmentsForIdeals}
        With notation as above, if $\Delta \geq 0$ is a $\bQ$-divisor and $D$ is a reduced divisor, then 
        \[
            \tau_+(\cO_X, D+\Delta) \subseteq \adj_+^D(\cO_X, \Delta) \subseteq \tau_+(\cO_X, (1-\epsilon)D + \Delta)
        \]
        for all $0 < \epsilon \leq 1$.  
    \end{lemma}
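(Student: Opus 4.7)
The plan is to reduce the statement to the analogous containment \autoref{lem.BasicFactsAboutTauAndAdj} \autoref{lem.BasicFactsAboutTauAndAdj.c} for the test \emph{submodules} of $\omega_X$ and $\omega_X(D)$, by simply unwinding the definition of the test \emph{ideals} given in \autoref{def.Tau+Ideal}. The key point will be that \autoref{lem.BasicFactsAboutTauAndAdj} \autoref{lem.BasicFactsAboutTauAndAdj.c} was stated only for effective boundaries, while $K_X + \Delta$ need not be effective, so a small twist by a Cartier divisor will be needed.

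First, I would substitute the definitions. By \autoref{def.Tau+Ideal} we have
\[
    \tau_+(\cO_X, D+\Delta) = \tau_+(\omega_X, K_X + D+\Delta),\quad \adj^D_+(\cO_X, \Delta) = \adj_+^D(\omega_X(D), K_X + D + \Delta),
\]
and
\[
    \tau_+(\cO_X, (1-\epsilon)D + \Delta) = \tau_+(\omega_X, K_X + (1-\epsilon)D + \Delta).
\]
Setting $B := K_X + \Delta$ (a $\bQ$-divisor, possibly non-effective), the desired containment becomes
\[
    \tau_+(\omega_X, D+B) \subseteq \adj_+^D(\omega_X(D), D + B) \subseteq \tau_+(\omega_X, (1-\epsilon)D + B).
\]

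Next, I would choose a Cartier divisor $H \geq 0$ large enough that $B + H \geq 0$; such an $H$ exists because $K_X$ is a Weil divisor on a projective $X$ over $\Spec R$, so we can take $H$ to be a suitable effective multiple of a very ample Cartier divisor. Then $B + H \geq 0$ is effective, and we may apply \autoref{lem.BasicFactsAboutTauAndAdj} \autoref{lem.BasicFactsAboutTauAndAdj.c} with $B + H$ in place of $B$ to obtain
\[
    \tau_+(\omega_X, D + B + H) \subseteq \adj_+^D(\omega_X(D), D + B + H) \subseteq \tau_+(\omega_X, (1-\epsilon)D + B + H).
\]
Tensoring this chain of subsheaves of $\omega_X(D)$ with the line bundle $\cO_X(H)$ preserves the inclusions, and by \autoref{def.Tau+V2} (which extends the test submodule and adjoint module to non-effective boundaries precisely by such a twist), the three terms respectively become $\tau_+(\omega_X, D+B)$, $\adj_+^D(\omega_X(D), D+B)$, and $\tau_+(\omega_X, (1-\epsilon)D + B)$.

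The only subtlety I foresee is to confirm that the extension in \autoref{def.Tau+V2} is truly independent of the choice of $H$, and in particular is compatible in the sense that using a common $H$ for all three terms yields the correct result after untwisting; but this follows immediately from \autoref{lem.BasicFactsAboutTauAndAdj} \autoref{lem.BasicFactsAboutTauAndAdj.b} applied to a pair of choices $H, H'$ (replacing $H$ by $H + H'$ makes both simultaneously work). Thus the containment is established, completing the proof.
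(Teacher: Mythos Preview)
Your proof is correct and follows essentially the same route as the paper's: both reduce to \autoref{lem.BasicFactsAboutTauAndAdj}\autoref{lem.BasicFactsAboutTauAndAdj.c} by substituting $B = K_X + \Delta$ into the submodule containment. The paper simply writes ``replacing $B$ by $K_X + B$'' and invokes \autoref{def.Tau+Ideal} directly, whereas you spell out the intermediate twist by an effective Cartier divisor $H$ (via \autoref{def.Tau+V2} and \autoref{lem.BasicFactsAboutTauAndAdj}\autoref{lem.BasicFactsAboutTauAndAdj.b}) to handle the possible non-effectivity of $K_X + \Delta$; this extra care is warranted since \autoref{lem.BasicFactsAboutTauAndAdj}\autoref{lem.BasicFactsAboutTauAndAdj.c} was literally stated only for effective $B$, but it is the same argument.
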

    \begin{proof}
        By \autoref{lem.BasicFactsAboutTauAndAdj} \autoref{lem.BasicFactsAboutTauAndAdj.c} replacing $B$ by $K_X + B$, we have that 
        \[
            \tau_+(\omega_X, K_X + D+B) \subseteq \adj_+^D(\omega_X(D), K_X + D+B) \subseteq \tau_+(\omega_X, K_X + (1-\epsilon)D + B).
        \]            
        By definition, this is 
        \[
            \tau_+(\cO_X, D+B) \subseteq \adj_+^D(\cO_X, B) \subseteq \tau_+(\cO_X, (1-\epsilon)D + B).
        \]
    \end{proof}

    We point out the following transformation rule for finite maps.
    \begin{theorem}[Transformation rules for finite maps]
        \label{thm.TransofmrationRulesUnderFiniteMaps}
        With notation as above, suppose that $f : Y \to X$ is a finite dominant map between normal integral schemes.  Then for any $B \geq 0$, we have that 
        \[
            \Tr \big( f_* \tau_+(\omega_Y, f^* B)\big) = \tau_+(\omega_X, B).
        \]
        Furthermore, if $\Delta \geq 0$ and $f$ is generically separable with ramification divisor $\Ram$, then we have 
        \[
            \Tr \big( f_* \tau_+(\cO_Y, f^* \Delta - \Ram)) = \tau_+(\cO_X, \Delta).
        \]
    \end{theorem}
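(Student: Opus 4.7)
The plan is to leverage the intersection-of-traces characterization of $\myB^0$ from \autoref{lem.BasicPropertiesOfB^0AndB^0_D} together with the cofinality of finite covers of $X$ that factor through $f\colon Y\to X$. First I would prove the corresponding identity at the level of $\myB^0$: for a sufficiently ample $\sL$ on $X$ and $i \gg 0$,
\[
\Tr_f\bigl(\myB^0(Y, f^*B, \omega_Y \otimes (f^*\sL)^i)\bigr) = \myB^0(X, B, \omega_X \otimes \sL^i).
\]
Any finite dominant $h\colon Z\to X$ with $K(Z)\subseteq \overline{K(X)}$ may be enlarged to one factoring through $Y$ by normalizing $Z$ inside the compositum $K(Z)\cdot K(Y)\subseteq \overline{K(X)}$, so in the intersection defining $\myB^0(X,B,\omega_X\otimes\sL^i)$ we may restrict to $h=f\circ g$ with $g\colon Z\to Y$. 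Functoriality of the Grothendieck trace gives $\Tr_h=\Tr_f\circ f_*\Tr_g$, hence $\Image(\Tr_h)=\Tr_f(\Image(\Tr_g))$. Since $H^0(Y,\omega_Y\otimes(f^*\sL)^i)$ is a Noetherian $R$-module, the descending chain of submodules $\Image(\Tr_g)$ stabilizes; its stable value is $\myB^0(Y,f^*B,\omega_Y\otimes(f^*\sL)^i)$, and $\Tr_f$ then commutes with this (already-stabilized) intersection.

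Next I would promote this identity to $\tau_+$. By \autoref{lem.CleanGlobalGenerationOfTestIdeals} (applied on both $X$ with $\sL$ and $Y$ with the ample pullback $f^*\sL$), for $i\gg 0$ the module $\myB^0(X,B,\omega_X\otimes\sL^i)$ generates $\tau_+(\omega_X,B)\otimes\sL^i$, and $\myB^0(Y,f^*B,\omega_Y\otimes(f^*\sL)^i)$ generates $\tau_+(\omega_Y,f^*B)\otimes(f^*\sL)^i$. Pushing forward by the affine map $f$ and using the projection formula,
\[
\Tr_f\bigl(f_*\tau_+(\omega_Y,f^*B)\bigr)\otimes\sL^i
\]
is then generated by $\Tr_f\bigl(\myB^0(Y,f^*B,\omega_Y\otimes(f^*\sL)^i)\bigr)=\myB^0(X,B,\omega_X\otimes\sL^i)$, which also generates $\tau_+(\omega_X,B)\otimes\sL^i$. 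Canceling the ample line bundle yields the first formula for effective $B$. For a general (not necessarily effective) $\bQ$-divisor $B$, I would pick a Cartier divisor $H\geq 0$ with $B+H\geq 0$ and combine \autoref{def.Tau+V2} with the projection formula:
\[
\Tr_f\bigl(f_*\tau_+(\omega_Y,f^*B)\bigr)=\Tr_f\bigl(f_*\tau_+(\omega_Y,f^*(B+H))\bigr)\otimes\cO_X(H)=\tau_+(\omega_X,B+H)\otimes\cO_X(H)=\tau_+(\omega_X,B).
\]

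For the second formula, I would invoke Riemann--Hurwitz in the generically separable, normal setting to write $K_Y=f^*K_X+\Ram$, so that $K_Y+f^*\Delta-\Ram=f^*(K_X+\Delta)$. By \autoref{def.Tau+Ideal},
\[
\tau_+(\cO_Y,f^*\Delta-\Ram)=\tau_+(\omega_Y,K_Y+f^*\Delta-\Ram)=\tau_+(\omega_Y,f^*(K_X+\Delta)),
\]
and the non-effective version of the first formula, applied with $B=K_X+\Delta$, gives
\[
\Tr_f\bigl(f_*\tau_+(\cO_Y,f^*\Delta-\Ram)\bigr)=\tau_+(\omega_X,K_X+\Delta)=\tau_+(\cO_X,\Delta).
\]

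The main obstacle is the interaction between $\Tr_f$ and the intersection defining $\myB^0$: the identity $\Tr_f(\bigcap_g \Image\,\Tr_g)=\bigcap_g\Tr_f(\Image\,\Tr_g)$ fails for arbitrary descending families, and the whole argument hinges on using Noetherianity of the relevant $R$-modules to force stabilization so that the intersection is realized at a single $g$. All subsequent manipulations (projection formula, cancellation of the ample twist, Riemann--Hurwitz bookkeeping) are routine once this core identity is in hand.
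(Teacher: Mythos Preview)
Your core identity $\Tr_f\bigl(\myB^0(Y,f^*B,\omega_Y\otimes(f^*\sL)^i)\bigr)=\myB^0(X,B,\omega_X\otimes\sL^i)$ is correct and is exactly what the paper uses, but your proof of it has a genuine error. You claim that because $H^0(Y,\omega_Y\otimes(f^*\sL)^i)$ is a Noetherian $R$-module, the descending chain of submodules $\Image(\Tr_g)$ stabilizes. Noetherian modules satisfy the \emph{ascending} chain condition, not the descending one; there is no reason for a descending chain of submodules of a finitely generated module over a complete local ring to stabilize. In fact the paper explicitly records (in its comparison with characteristic $p>0$) that this stabilization \emph{fails} in mixed characteristic, citing \cite[Example 4.14]{BMPSTWW-MMP}. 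So the intersection defining $\myB^0$ is in general not realized at a single cover, and your whole mechanism for commuting $\Tr_f$ past the intersection collapses.

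The fix is to use item (c) rather than item (b) of \autoref{lem.BasicPropertiesOfB^0AndB^0_D}: $\myB^0(X,B,\omega_X\otimes\sL^i)$ is the image of the single map $\lim_{h:Z\to X}H^0(Z,\dots)\to H^0(X,\dots)$. Your cofinality observation identifies this inverse limit with $\lim_{g:Z\to Y}H^0(Z,\dots)$, and the projection to $H^0(X,\dots)$ factors as the projection to $H^0(Y,\dots)$ followed by $\Tr_f$. Since $\Image(\Tr_f\circ\pi)=\Tr_f(\Image(\pi))$ holds for any composition, the desired identity follows with no stabilization needed. From there your promotion to $\tau_+$ and your derivation of the second formula via $K_Y=f^*K_X+\Ram$ are correct in outline; the paper is slightly more careful at one point, separately choosing $\sL$ ample enough that $f_*\tau_+(\omega_Y,f^*B)\otimes\sL$ is globally generated as an $\cO_X$-module and that global sections surject onto those of its trace image, but this is a routine amplification.
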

    \begin{proof}
        By the inverse limit description of $\myB^0$ found in \autoref{lem.BasicPropertiesOfB^0AndB^0_D}, we know that         
        \[
            \myB^0(X, B, \omega_X \otimes \sL) = \Tr \big( \myB^0(Y, f^* B, \omega_Y \otimes f^* \sL) \big).  
        \]
        Choosing $\sL$ sufficiently ample so that $f^* \sL$ is also sufficiently ample, we have using \autoref{prop.B0ForBigiIsGlobalSections} that 
        \[
            H^0(X, \tau_+(\omega_X, B) \otimes \sL) = \Tr \big( H^0(Y, \tau_+(\omega_Y, f^* B) \otimes f^* \sL) \big).
        \]
        Now, replacing $\sL$ with a power if necessary, we have that $(f_* \tau_+(\omega_Y, f^* B)) \otimes \sL$ is also globally generated as a $\cO_X$-module.  
        Its image under the following composition 
        \[
            f_* \big( \tau_+(\omega_Y, f^* B) \otimes f^* \sL) \hookrightarrow f_* \big( \omega_Y \otimes f^* \sL \big) \xrightarrow{\Tr} \omega_X \otimes \sL
        \]
        is thus globally generated.  Furthermore, if $J \otimes \sL \subseteq \omega_X \otimes \sL$ is that image, we have a surjection of global sections (again replacing $\sL$ by a power if necessary)
        \[
            H^0(X, f_* \big( \tau_+(\omega_Y, f^* B) \otimes f^* \sL)) \to H^0(X, J \otimes \sL).
        \]
        But the image of those global sections is $\myB^0(X, B, \omega_X \otimes \sL) = H^0(X, \tau_+(\omega_X, B) \otimes \sL)$.
        Thus $J \otimes \sL = \tau_+(\omega_X, B) \otimes \sL$ and the first statement is proven.  
        
        The second statement follows from the first via the techniques we have already used (or by mimicking the argument above), once one notes that $K_Y = f^* K_X + \Ram$.  
    \end{proof}

    \'Etale extensions of the base also behave well.

    \begin{proposition}
        \label{prop.Tau+BaseChangesAlongEtaleMapsOfTheBase}
        With notation as above, suppose that $H^0(X, \cO_X) = R \subseteq S$ is a finite \'etale extension.  Then 
        \[
            \tau_+(\omega_X, B) \otimes_R S = \tau_+(\omega_{X_S}, B_S)
        \]
        where $X_S, B_S$ denote the base change.
    \end{proposition}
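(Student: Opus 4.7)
The plan is to reduce the proposition to the compatibility of $\myB^0$ with finite étale base change of $R$ (namely \autoref{prop.FiniteEtalExtensionOfRAndB^0}), combined with the characterization of $\tau_+(\omega_X, B)$ as the subsheaf of $\omega_X$ generated, after twisting by a sufficiently ample line bundle, by the $R$-submodule $\myB^0$ of global sections (see \autoref{prop.B0ForBigiIsGlobalSections}).

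To execute this, I would fix a very ample line bundle $\sL$ on $X$ and let $\pi \colon X_S \to X$ denote the finite étale base change morphism. Since $\pi$ is étale, $\pi^* \omega_X = \omega_{X_S}$, and $\sL_S := \pi^* \sL$ is very ample on $X_S$. By \autoref{prop.B0ForBigiIsGlobalSections}, for $i \gg 0$ the sheaf $\tau_+(\omega_X, B) \otimes \sL^i$ is globally generated by $\myB^0(X, B, \omega_X \otimes \sL^i)$, and likewise $\tau_+(\omega_{X_S}, B_S) \otimes \sL_S^i$ is globally generated by $\myB^0(X_S, B_S, \omega_{X_S} \otimes \sL_S^i)$; pick an $i$ for which both assertions hold.

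Next, \autoref{prop.FiniteEtalExtensionOfRAndB^0} supplies the identification
\[
    \myB^0(X, B, \omega_X \otimes \sL^i) \otimes_R S = \myB^0(X_S, B_S, \omega_{X_S} \otimes \sL_S^i).
\]
Since $R \to S$ is finite flat (free, in fact, as $R$ is local) and $\pi$ is flat, formation of the subsheaf generated by a collection of global sections commutes with the pullback $\pi^*$. Combining this with the displayed identity gives
\[
    \pi^*\bigl(\tau_+(\omega_X, B) \otimes \sL^i\bigr) = \tau_+(\omega_{X_S}, B_S) \otimes \sL_S^i,
\]
and untwisting by $\sL_S^{-i}$ then yields the desired equality.

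The main substantive work is already contained in \autoref{prop.FiniteEtalExtensionOfRAndB^0}, so the present proposition is essentially a formal consequence. The one mild subtlety, and the only place I would expect friction, is that $X_S$ need not be irreducible even when $S$ is a domain; however, since $\pi$ is étale $X_S$ remains normal, and the argument applies componentwise, interpreting $\tau_+(\omega_{X_S}, B_S)$ as the direct sum over the connected components of $X_S$ (each of which is a normal integral scheme finite étale over $X$).
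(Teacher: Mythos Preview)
Your argument is correct and is precisely the spelled-out version of the paper's one-line proof, which just says the result is a direct consequence of \autoref{prop.FiniteEtalExtensionOfRAndB^0}. One small remark: your concern in the last paragraph is unnecessary, since $H^0(X_S,\cO_{X_S}) = H^0(X,\cO_X)\otimes_R S = S$ is a domain, so $X_S$ is connected, and being \'etale over the normal $X$ it is normal, hence integral.
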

    \begin{proof}
        This is a direct consequence of \autoref{prop.FiniteEtalExtensionOfRAndB^0}.  
    \end{proof}

\begin{theorem}[Adjunction and inversion thereof]
    \label{thm.AdjunctionForAdjTau}
    With $X$ as above, suppose that $K_X + D + \Delta$ is $\bQ$-Cartier where $D$ is reduced, $\Delta \geq 0$, and $D$ and $\Delta$ have no common components.  Then
    \[
        \adj^D_+(\cO_X, \Delta) \cdot \cO_{D^{\Norm}} = \tau_+(\cO_{D^{\Norm}}, \Diff_{D^{\Norm}}(D+\Delta)).
    \]
\end{theorem}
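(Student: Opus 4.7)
My plan is to set up a single sufficiently ample Cartier line bundle $\cO_X(K_X + M)$ for which both $\adj^D_+(\cO_X,\Delta)$ on $X$ and $\tau_+(\cO_{D^{\Norm}},\Diff_{D^{\Norm}}(D+\Delta))$ on $D^{\Norm}$ are globally generated by the relevant spaces of $\bigplus$-stable sections, then apply the lifting-of-sections theorem \autoref{thm.LiftingSections} and compare those generating sets.

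First I would choose $M$ carefully. Using \autoref{rem.MakingKX+MCartier}, pick a Weil divisor $M_0$ so that $K_X + M_0$ is Cartier, fix a very ample Cartier divisor $H$, and set $M := M_0 + nH$ for $n \gg 0$. Then $K_X + M$ is Cartier, and $M - D - \Delta = (K_X + M) - (K_X + D + \Delta)$ is $\bQ$-Cartier by the standing hypothesis. For $n$ large, $M - D - \Delta$ becomes ample, in particular big and semi-ample. Increasing $n$ further if necessary, \autoref{lem.CleanGlobalGenerationOfTestIdeals} ensures that $\adj^D_+(\cO_X, \Delta) \otimes \cO_X(K_X + M)$ is globally generated by $\myB^0_D(X, D+\Delta, \cO_X(K_X + M))$. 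Let $\kappa : D^{\Norm} \to X$ denote the canonical (finite) map and choose a Weil divisor $K_{D^{\Norm}} + M_{D^{\Norm}}$ with $\cO_{D^{\Norm}}(K_{D^{\Norm}} + M_{D^{\Norm}}) \cong \kappa^* \cO_X(K_X + M)$; since $\kappa$ is finite, further enlargement of $n$ makes this line bundle sufficiently ample on $D^{\Norm}$ to ensure, again by \autoref{lem.CleanGlobalGenerationOfTestIdeals}, that $\tau_+(\cO_{D^{\Norm}}, \Diff_{D^{\Norm}}(D+\Delta)) \otimes \cO_{D^{\Norm}}(K_{D^{\Norm}} + M_{D^{\Norm}})$ is globally generated by $\myB^0(D^{\Norm}, \Diff_{D^{\Norm}}(D+\Delta), \cO_{D^{\Norm}}(K_{D^{\Norm}} + M_{D^{\Norm}}))$.

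With this setup, \autoref{thm.LiftingSections} applied with boundary $B = \Delta$ yields the surjection
\[
    \myB^0_D(X, D+\Delta, \cO_X(K_X + M)) \twoheadrightarrow \myB^0(D^{\Norm}, \Diff_{D^{\Norm}}(D+\Delta), \cO_{D^{\Norm}}(K_{D^{\Norm}} + M_{D^{\Norm}})).
\]
Abbreviate $J_X := \adj^D_+(\cO_X, \Delta)$ and $J_D := \tau_+(\cO_{D^{\Norm}}, \Diff_{D^{\Norm}}(D+\Delta))$. The generators of $J_X \otimes \cO_X(K_X + M)$ on the left of this surjection, restricted via $\kappa$ and projected into $\cO_{D^{\Norm}}(K_{D^{\Norm}} + M_{D^{\Norm}})$, produce a generating set for $(J_X \cdot \cO_{D^{\Norm}}) \otimes \cO_{D^{\Norm}}(K_{D^{\Norm}} + M_{D^{\Norm}})$ as a sub-$\cO_{D^{\Norm}}$-module of $\cO_{D^{\Norm}}(K_{D^{\Norm}} + M_{D^{\Norm}})$. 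By the surjection, this same set of restricted sections is $\myB^0(D^{\Norm}, \Diff_{D^{\Norm}}(D+\Delta), \cO_{D^{\Norm}}(K_{D^{\Norm}} + M_{D^{\Norm}}))$, which globally generates $J_D \otimes \cO_{D^{\Norm}}(K_{D^{\Norm}} + M_{D^{\Norm}})$ by construction. Hence the two sub-$\cO_{D^{\Norm}}$-modules of $\cO_{D^{\Norm}}(K_{D^{\Norm}} + M_{D^{\Norm}})$ agree, and untwisting gives $J_X \cdot \cO_{D^{\Norm}} = J_D$.

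The main obstacle is securing the simultaneous setup in the first paragraph: a single Cartier line bundle must be sufficiently ample both for the global generation of $\adj^D_+(\cO_X,\Delta)$ on $X$ and (after finite pullback) for the global generation of $\tau_+(\cO_{D^{\Norm}},\Diff_{D^{\Norm}}(D+\Delta))$ on $D^{\Norm}$, while also making $M - D - \Delta$ ample so that \autoref{thm.LiftingSections} applies. Once $M$ is fixed, the remainder is a formal comparison; the only minor additional verification is that restriction of a generating set of $J_X \otimes \cO_X(K_X + M)$ produces a generating set of $(J_X \cdot \cO_{D^{\Norm}}) \otimes \cO_{D^{\Norm}}(K_{D^{\Norm}} + M_{D^{\Norm}})$, which follows from right-exactness of pullback together with the definition of $J_X \cdot \cO_{D^{\Norm}}$ as the image of $\kappa^* J_X$ in $\cO_{D^{\Norm}}$.
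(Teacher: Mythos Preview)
Your proposal is correct and follows essentially the same approach as the paper's proof: choose a sufficiently ample Cartier $\sL = \cO_X(K_X+M)$ so that \autoref{lem.CleanGlobalGenerationOfTestIdeals} applies on both $X$ and $D^{\Norm}$, invoke \autoref{thm.LiftingSections} to obtain the surjection of $\bigplus$-stable section spaces, and compare the subsheaves they generate. Your write-up is somewhat more explicit about how to build $M$ and why the simultaneous ampleness conditions can be achieved, but the argument is the same.
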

\begin{proof}
    Choose $L = K_X + M$ with $\sL = \cO_X(L)$ sufficiently ample on $X$ whose restriction is also sufficiently ample on $D^{\Norm}$.  We have a surjection from \autoref{thm.LiftingSections}:
    \[
        \myB^0_D(X, D+\Delta, \sL) \twoheadrightarrow \myB^0(D^{\Norm}, \Diff_{D^{\Norm}}(\Delta + D), \sL|_{D^{\Norm}}).
    \]
    By \autoref{lem.CleanGlobalGenerationOfTestIdeals}, the left side generates $\adj^D_+(\cO_X, \Delta)\otimes \sL$ as a $\cO_X$-module, and the right side generates $\tau_+(\cO_{D^{\Norm}}, \Diff_{D^{\Norm}}(D+\Delta))\otimes \sL|_{D^N}$ as a $\cO_{D^{\Norm}}$-module.  In particular the image of $\adj^D_+(\cO_X, \Delta)$ under the map $\cO_X \to \cO_{D^{\Norm}}$ generates the ideal $\tau_+(\cO_{D^{\Norm}}, \Diff_{D^{\Norm}}(D+\Delta))$.  This is what we wanted to show.
\end{proof}

\subsection{Test ideals on the nonsingular locus}

    \begin{proposition}
        \label{prop.tauAgreesWithOmegaOnSNCLocus}
        With $(X, B)$ as above, let $U \subseteq X$ denote the locus where $X$ is regular, $B$ is SNC and $\lfloor B \rfloor = 0$.  Then 
        \[
            \tau_+(\omega_X, B)|_U = \omega_U
        \]
        and
        \[
            \tau_+(\cO_X, B)|_U = \cO_U.
        \]
    \end{proposition}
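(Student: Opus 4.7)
The inclusion $\tau_+(\omega_X, B)|_U \subseteq \omega_U$ is immediate from \autoref{lem.SimplePropertiesOfB0} \autoref{lem.SimplePropertiesOfB0.3} applied with $F = \lfloor B\rfloor$: it identifies $\myB^0(X, B, \omega_X \otimes \sL^i)$ with $\myB^0(X, B - \lfloor B \rfloor, \omega_X(-\lfloor B\rfloor)\otimes\sL^i) \subseteq H^0(X, \omega_X(-\lfloor B\rfloor)\otimes \sL^i)$. Hence $\tau_+(\omega_X, B) \subseteq \omega_X(-\lfloor B\rfloor)$, which restricts to $\omega_U$ on $U$ by hypothesis.

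For the reverse inclusion, the plan is to reduce to the no-boundary regular case \cite[Theorem 5.8]{BMPSTWW-MMP} by means of a Kummer-type finite cover. Writing $B|_U = \sum b_i B_i$ with $b_i = a_i/n_i \in (0,1)$ in lowest terms, set $n = \mathrm{lcm}_i(n_i)$ and construct a finite surjective morphism $f: Y \to X$ from a normal integral scheme that on $f^{-1}(U)$ realizes a Kummer cover of degree $n$ totally ramified along each $B_i$ (build $f$ by extending a local Kummer construction, e.g.\ via the normalization of $X$ in the appropriate function field extension). Then on $f^{-1}(U)$, $Y$ is regular, $\Ram_f = \sum(n-1)B_i'$ is Cartier, and $f^*B = \sum nb_iB_i' \leq \Ram_f$, with $B_i'$ the reduced preimage of $B_i$; the inequality is
\[
(n-1) - nb_i \;=\; (n/n_i)(n_i - a_i) - 1 \;\geq\; 0,
\]
using $a_i \leq n_i - 1$. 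By \autoref{lem.BasicFactsAboutTauAndAdj} \autoref{lem.BasicFactsAboutTauAndAdj.a} and \autoref{lem.BasicFactsAboutTauAndAdj.b},
\[
\tau_+(\omega_Y, f^*B) \;\supseteq\; \tau_+(\omega_Y, \Ram_f) \;=\; \tau_+(\omega_Y)\otimes\cO_Y(-\Ram_f).
\]
The transformation rule \autoref{thm.TransofmrationRulesUnderFiniteMaps}, together with the identification $\tau_+(\omega_Y)|_{f^{-1}(U)} = \omega_Y|_{f^{-1}(U)}$ from \cite[Theorem 5.8]{BMPSTWW-MMP}, and the surjectivity of the Grothendieck trace $\Tr : f_*\omega_Y(-\Ram_f) \to \omega_X$ on the regular locus (where $\Ram_f = K_{Y/X}$), then yields $\omega_U \subseteq \tau_+(\omega_X, B)|_U$.

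The second equality $\tau_+(\cO_X, B)|_U = \cO_U$ follows from the first. By \autoref{lem.CleanGlobalGenerationOfTestIdeals}, $\tau_+(\cO_X, B)\otimes\sM$ is generated by $\myB^0(X, B, \sM)$ for $\sM$ sufficiently ample; choosing $\sM = \omega_X\otimes\sL^i$ with $K_X + iL$ Cartier and $i \gg 0$, the first part shows that this same $\myB^0$ generates $\sM|_U$ on $U$. Hence $\tau_+(\cO_X, B)\otimes\sM|_U = \sM|_U$, giving $\tau_+(\cO_X, B)|_U = \cO_U$.

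The main obstacle will be the construction of the \emph{global} cover $f : Y \to X$: the Kummer construction is natural only on $U$, and extending it across $X \setminus U$ (which may contain singularities of $X$ or non-SNC strata of $B$) requires taking a Galois closure and normalizing, with care in verifying the regularity and Cartier conditions over $f^{-1}(U)$. A secondary subtlety is that \cite[Theorem 5.8]{BMPSTWW-MMP} is stated for globally regular $X$, while our $Y$ is regular only on the open set $f^{-1}(U)$; we must invoke it in a stalkwise/local form, which is valid because the underlying Bhatt-vanishing argument is essentially local in nature.
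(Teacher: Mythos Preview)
Your approach via Kummer covers is a natural idea, but it has a genuine circularity at the key step. After constructing $f:Y\to X$, you need $\tau_+(\omega_Y)|_{f^{-1}(U)} = \omega_Y|_{f^{-1}(U)}$ to conclude. You justify this by invoking \cite[Theorem 5.8]{BMPSTWW-MMP} ``in a stalkwise/local form,'' but that result is stated for \emph{globally} regular schemes, and your $Y$ is regular only on the open set $f^{-1}(U)$. Promoting it to a local statement is exactly the $B=0$ case of \autoref{prop.tauAgreesWithOmegaOnSNCLocus} itself; the remark after \autoref{lem.TauIsNonzero} in the paper explicitly flags this as nontrivial. In mixed characteristic we cannot simply replace $Y$ by a regular projective compactification of $f^{-1}(U)$, so there is no easy workaround. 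Your appeal to ``the underlying Bhatt-vanishing argument is essentially local'' is a hand-wave: Bhatt vanishing is a global cohomological statement on $X^+$, and deducing local triviality of $\tau_+$ from it is precisely the content of the proposition.

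There are also secondary issues. First, the global construction of the cover: the components $B_i$ need not be principal on $X$, so ``extending the local Kummer construction via normalization in a field extension'' may introduce extra ramification outside $U$ that you have not controlled; and when $p$ divides $n$ and some $B_i$ is vertical (generic point in characteristic $p$), the ramification is wild and your formula $\Ram_f=\sum(n-1)B_i'$ fails (though the inequality $f^*B\le\Ram_f$ would in fact only improve). Second, the twist identity $\tau_+(\omega_Y,\Ram_f)=\tau_+(\omega_Y)\otimes\cO_Y(-\Ram_f)$ from \autoref{lem.BasicFactsAboutTauAndAdj}\autoref{lem.BasicFactsAboutTauAndAdj.b} requires $\Ram_f$ to be Cartier \emph{globally} on $Y$, which you have not ensured.

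For comparison, the paper avoids all of this by an induction on the codimension of a point $x\in U$, with base case the generic point (\autoref{lem.TauIsNonzero}). For the inductive step, it chooses a prime divisor $D$ through $x$ (a component of $B$ if $x\in\Supp B$, otherwise generic), uses the containment $\adj_+^D(\cO_X,B'+B'')\subseteq\tau_+(\cO_X,B)$ from \autoref{lem.SimpleAdjointTestContainmentsForIdeals}, and applies adjunction (\autoref{thm.AdjunctionForAdjTau}) to reduce to $D^{\Norm}$, where the codimension of $x$ has dropped. This is self-contained within the paper and sidesteps both the circularity and the global-cover construction.
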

    \begin{proof}
        Choose a (not necessarily closed) point $x \in U$.   We proceed by induction on the codimension/height of $x$ as a point of $X$.  If $x$ is the generic point of $X$, there is nothing to prove since these sheaves are nonzero by \autoref{lem.TauIsNonzero}. 
        
        Since $x$ is nonsingular, we may pick $K_X$ that does not contain $x$ in its support.  Choosing $E \geq 0$ a Weil divisor also not containing $x$ in its support so that $K_X + E \geq 0$ is Cartier, we have that 
        \[
            \tau_+(\omega_X, B) \supseteq \tau_+(\omega_X, B + K_X + E) = \tau_+(\cO_X, B + E) \subseteq \tau_+(\cO_X, B).
        \]  
        It then suffices to show that $\tau_+(\cO_X, B+E)$ agrees with $\cO_X$ at $x$ to show both statements.  We then replace $B$ by $B+E$ and forget about $E$ (note that $\lfloor B \rfloor$ may no longer equal zero, but it is zero in a neighborhood of $x$).
        
        Choose $D$ a prime divisor passing through $x$ that is nonsingular at $x$ so that $B \vee D$ is SNC at $x$.  There are two ways to do this, if $x \notin \Supp B$ then choose $D$ arbitrarily but nonsingular at $x$.  Otherwise, choose $D$ to be one of the components of $\Supp B$ at $x$.  Either way, write $B \vee D = B' + D$ where $B'$ does not have $D$ in its support.  Finally, choose $B'' \geq 0$ a $\bQ$-divisor not containing $x$ in its support so that $K_X + D + B' + B''$ is $\bQ$-Cartier.  Since 
        \[
            (1 - \epsilon)D + B' + B'' \geq B
        \]
        for $\epsilon \gg 0$, we have that 
        \[
            \adj^D_+(\cO_X, B'+B'') \subseteq \tau_+(\cO_X, (1 - \epsilon)D + B' + B'') \subseteq \tau_+(\cO_X, B)
        \]
        where the first containment is \autoref{lem.SimpleAdjointTestContainmentsForIdeals}.
        Thus it suffices to show that $\adj^D_+(\cO_X, B'+B'')_x = \cO_{X,x}$.  

     Now, by \autoref{thm.AdjunctionForAdjTau}
        \[
            \adj^D_+(\cO_X, B' + B'') \cdot \cO_{D^{\Norm}} = \tau_+(\cO_{D^{\Norm}}, \Diff_{D^{\Norm}}(D+B'+B''))
        \]
        so it suffices to show the theorem for the right side since $D$ is normal at $x$.  Since $D$ is regular at $x$ and $\Diff_{D^{\Norm}}(D+B'+B'')$ is simple normal crossings at $x$, we are done by induction.
    \end{proof}

\subsection{A brief comparison with characteristic $p > 0$}
        
        Suppose briefly that $R$ is an $F$-finite field\footnote{We expect everything works for more general complete local Noetherian bases, but we restrict to this case here to keep the arguments short.} of characteristic $p > 0$ and $X \to \Spec R$ is projective where $X$ is normal and integral.  Everything we have defined so far still makes sense.  Indeed, in the case where $B$ is $\bQ$-Cartier and $M$ is Cartier, we have that $\myB^0(X, \Delta, \cO_X(K_X + M))$ agrees with $T^0(X, M - \Delta)$ as defined in \cite[Section 6]{BlickleSchwedeTuckerTestAlterations}.  In that case, we find that 
        \[
            \myB^0(X, B, \cO_X(K_X+ M)) = \Image\Big(H^0(Y, \cO_Y(K_Y - f^*B + f^*M)) \to H^0(X, \cO_X(K_X + M))\Big)
        \]
        for a sufficiently large $f : Y \to X$ finite map or alteration.  This stabilization does not occur in mixed characteristic, see \cite[Example 4.14]{BMPSTWW-MMP}.
        On the other hand, for such a sufficiently large finite map, we also have that 
        \[
            \tau(\omega_X, B) = \Image\Big( f_* \cO_Y(K_Y - f^*B) \to \omega_X \Big)
        \]
        where $\tau(\omega_X, B)$ is the test submodule, see for instance \cite[Definition 2.33 and the proof of Theorem 4.6]{BlickleSchwedeTuckerTestAlterations}.  
        
        \begin{theorem}
            \label{thm.AgreesWithCharPTestIdeal}
            With notation as above, if $B$ is $\bQ$-Cartier, we have that 
            \[
                \tau_+(\omega_X, B) = \tau(\omega_X, B).
            \]
            Hence, if $\Delta \geq 0$ is a $\bQ$-divisor such that $K_X + \Delta$ is $\bQ$-Cartier, then we have that 
            \[
                \tau_+(\cO_X, \Delta) = \tau(\cO_X, \Delta).
            \]
        \end{theorem}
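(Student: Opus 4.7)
The plan is to match the two submodules of $\omega_X$ by comparing their sections twisted with a sufficiently ample $\sL^i$. Fix a very ample $\sL = \cO_X(L)$ on $X$. The two facts recalled immediately before the theorem say that in the $F$-finite characteristic $p$ setting (with $B$ being $\bQ$-Cartier), both $\myB^0(X, B, \omega_X \otimes \sL^i)$ and $\tau(\omega_X, B)$ admit finite presentations: there exists a sufficiently large normal finite dominant cover $f \: Y \to X$ such that
\[
\myB^0(X, B, \omega_X \otimes \sL^i) = \Image\!\Big( H^0(Y, \cO_Y(K_Y - f^*B + if^*L)) \to H^0(X, \omega_X \otimes \sL^i)\Big),
\]
and for possibly a different sufficiently large $f$,
\[
\tau(\omega_X, B) = \Image\!\big( f_*\cO_Y(K_Y - f^*B) \to \omega_X \big).
\]

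The first step is to combine these two stabilizations: given any two normal finite covers that work for the two stabilizations separately, one can dominate both by a single normal finite $f\:Y\to X$ and the stabilization descriptions still hold for that $f$ (this is cofinality of the index systems). Fix such an $f$.

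Next, take $i \gg 0$ so that $f^* \sL^i$ is sufficiently ample on $Y$; then $f_* \cO_Y(K_Y - f^*B + if^*L) = f_*\cO_Y(K_Y - f^*B) \otimes \sL^i$ is globally generated and surjects onto $\tau(\omega_X, B) \otimes \sL^i$, which is therefore globally generated as well. Taking global sections and comparing images inside $H^0(X, \omega_X \otimes \sL^i)$, one obtains
\[
H^0(X, \tau(\omega_X, B) \otimes \sL^i) = \Image\!\Big( H^0(Y, \cO_Y(K_Y - f^*B + if^*L)) \to H^0(X, \omega_X \otimes \sL^i)\Big) = \myB^0(X, B, \omega_X \otimes \sL^i).
\]
By \autoref{prop.B0ForBigiIsGlobalSections}, the right-hand side also equals $H^0(X, \tau_+(\omega_X, B) \otimes \sL^i)$, and this module generates $\tau_+(\omega_X, B)\otimes \sL^i$. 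Since $\tau(\omega_X, B) \otimes \sL^i$ and $\tau_+(\omega_X, B) \otimes \sL^i$ are both globally generated by the same space of sections inside $\omega_X \otimes \sL^i$, they are equal; untwisting by $\sL^{-i}$ gives $\tau(\omega_X, B) = \tau_+(\omega_X, B)$.

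For the ideal statement, apply what we have just shown to $B = K_X + \Delta$, which is $\bQ$-Cartier by hypothesis. By \autoref{def.Tau+Ideal}, $\tau_+(\cO_X, \Delta) = \tau_+(\omega_X, K_X + \Delta)$, and the analogous identity $\tau(\cO_X, \Delta) = \tau(\omega_X, K_X + \Delta)$ is known in characteristic $p$ (this is built into the usual definition of $\tau(\cO_X, \Delta)$ via a choice of $K_X$ and the twist isomorphism $\omega_X \otimes \cO_X(-K_X) \cong \cO_X$ on the Gorenstein locus, extended to all of $X$ via reflexification). The main obstacle is just the cofinality/compatibility argument in the first step: one must know that a single $f\:Y\to X$ can be chosen large enough to witness both the $\myB^0$-stabilization and the trace-image description of $\tau$. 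Everything beyond that is formal manipulation using global generation for sufficiently ample twists.
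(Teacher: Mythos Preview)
Your argument for $\tau_+(\omega_X, B) \subseteq \tau(\omega_X, B)$ is essentially the paper's: once some $f$ witnesses the sheaf-level description $\tau(\omega_X, B) = \Image\big(f_*\cO_Y(K_Y - f^*B) \to \omega_X\big)$, every $\myB^0(X, B, \omega_X \otimes \sL^i)$ lands in $H^0(X, \tau(\omega_X, B) \otimes \sL^i)$, and since $\myB^0$ globally generates $\tau_+(\omega_X,B)\otimes\sL^i$ for $i\gg 0$, the containment follows.

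The reverse containment has a genuine gap. You need
\[
H^0(X, \tau(\omega_X, B) \otimes \sL^i) \;=\; \Image\!\Big(H^0(Y, \cO_Y(K_Y - f^*B + if^*L)) \to H^0(X, \omega_X \otimes \sL^i)\Big)
\]
for the \emph{same} $f$ that witnesses the $\myB^0$-stabilization at that $i$. But the $\myB^0$-stabilization recalled before the theorem is established only for each twist separately: the cover $f$ computing $\myB^0(X,B,\omega_X\otimes\sL^i)$ may depend on $i$. For that $f$, the Serre-vanishing step you invoke---surjectivity of $H^0(X, f_*\cO_Y(K_Y-f^*B)\otimes\sL^i)\to H^0(X,\tau(\omega_X,B)\otimes\sL^i)$---needs $i$ large \emph{relative to the kernel for that particular $f$}. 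So you face a circular dependency: enlarging $f$ to recover the $\myB^0$-description at a given $i$ can spoil the surjectivity on $H^0$, while enlarging $i$ to restore surjectivity may force a new $f$. Cofinality of the index system does not break this loop; what you would need is the a priori statement that some single $f$ computes $\myB^0$ for all sufficiently large $i$, and that is essentially equivalent to the theorem you are proving.

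The paper avoids this entirely. It first passes to a finite cover making $B$ Cartier (using \autoref{thm.TransofmrationRulesUnderFiniteMaps} and its characteristic-$p$ analogue), then twists to reduce to $B=0$. For $\tau(\omega_X)\subseteq\tau_+(\omega_X)$ it uses a different characterization: $\tau(\omega_X)$ is the \emph{smallest} nonzero submodule $M\subseteq\omega_X$ with $\Tr_F(F_*M)=M$. Applying \autoref{thm.TransofmrationRulesUnderFiniteMaps} with $f$ equal to the absolute Frobenius shows $\Tr_F\big(F_*\tau_+(\omega_X)\big)=\tau_+(\omega_X)$, and $\tau_+(\omega_X)\neq 0$ by \autoref{lem.TauIsNonzero}; minimality then gives $\tau(\omega_X)\subseteq\tau_+(\omega_X)$. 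This Frobenius-minimality step is the idea missing from your approach. Your treatment of the ideal statement is fine and matches the paper.
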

        \begin{proof}
            Choose $g : X' \to X$ with $g^* B$ Cartier.  Since $\Tr(g_* \tau_+(\omega_{X'}, g^* B)) = \tau_+(\omega_X, B)$ by \autoref{thm.TransofmrationRulesUnderFiniteMaps} and $\Tr(g_* \tau(\omega_{X'}, g^* B)) = \tau(\omega_X, B)$ by \cite[Proposition 4.4]{BlickleSchwedeTuckerTestAlterations}, we may replace $X$ by $X'$ and so assume $B$ is Cartier.  But now 
            \[
                \tau_+(\omega_X, B) = \tau_+(\omega_X) \otimes \cO_X(-B) \text{ and } \tau(\omega_X, B) = \tau(\omega_X) \otimes \cO_X(-B)
            \]
            and so we may assume that $B = 0$.

            Let $F : X \to X$ be the absolute Frobenius.  Then $\Tr_F (\tau_+(\omega_X)) = \tau_+(\omega_X)$.  But $\tau(\omega_X)$ is the smallest nonzero module with this property, so we have that 
            \[
                \tau(\omega_X) \subseteq \tau_+(\omega_X).
            \]
            
            For the reverse inclusion notice that for a sufficiently large $f : Y \to X$ with 
            \[
                \tau(\omega_X) = \Image\Big( f_* \omega_Y \to \omega_X \Big)
            \]
            we have that
            \[
                \myB^0(X, B, \cO_X(K_X+ M)) = \Image\Big(H^0(Y, \cO_Y(K_Y + f^*M)) \to H^0(X, \cO_X(K_X + M))\Big)
            \]
            is contained in $H^0(X, \tau(\omega_X) \otimes M)$.  Hence, if we choose $M$ ample enough so that $\myB^0(X, 0, \cO_X(K_X + M))$ globally generates $\tau_+(\omega_X) \otimes \cO_X(M)$ then we obtain that
            \[
                \tau_+(\omega_X) \subseteq \tau(\omega_X).
            \]
            This finishes the proof of the first statement.

            The second statement follows from the first since we have 
            $\tau_+(\cO_X, \Delta) = \tau_+(\omega_X, K_X + \Delta)$ and $\tau(\cO_X, \Delta) = \tau(\omega_X, K_X + \Delta)$.            
        \end{proof}

\section{Passing to affine charts and localization}
\label{sec.Tau+OnAffineCharts}

    Our goal in this section is to show that $\tau_+$ is actually well-defined affine locally.  First we point out a transformation rule for birational maps.  The same containment also holds in characteristic $p > 0$.


    \begin{proposition}
        \label{prop.ContainmentBirationalMaps}
        Suppose that $X$ is normal and integral, projective over a Noetherian complete local ring $(R, \fram)$ of mixed characteristic.  Suppose additionally that $B$ is $\bQ$-Cartier on $X$ and that $\pi : Y \to X$ is any projective birational map.  Then 
        \[
             \tau_+(\omega_X, B) \subseteq \pi_* \tau_+(\omega_Y, \pi^* B).
        \]
    \end{proposition}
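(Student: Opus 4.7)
The plan is to combine the birational invariance of $\myB^0$ from \autoref{prop.B0StableBirational} with the global generation characterization of $\tau_+$ via $\myB^0$ on sufficiently ample twists (\autoref{prop.Tau+IndependentOfAmple}, \autoref{prop.B0ForBigiIsGlobalSections}). Fix a very ample line bundle $\sL$ on $X$ and, crucially, an ample line bundle $\sH$ on $Y$. For $i$ sufficiently large, $\tau_+(\omega_X, B) \otimes \sL^i$ is globally generated by $\myB^0(X, B, \omega_X \otimes \sL^i)$, and since $B$ is $\bQ$-Cartier while $\sL^i$ is Cartier, \autoref{prop.B0StableBirational} identifies this space with $\myB^0(Y, \pi^* B, \omega_Y \otimes \pi^* \sL^i)$, viewed inside $H^0(Y, \omega_Y \otimes \pi^* \sL^i) \subseteq H^0(X, \omega_X \otimes \sL^i)$ via the trace $\pi_* \omega_Y \hookrightarrow \omega_X$ together with the projection formula.

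The crux of the argument will be to verify that every $s \in \myB^0(Y, \pi^* B, \omega_Y \otimes \pi^* \sL^i)$, regarded as a section of $\omega_Y \otimes \pi^* \sL^i$ on $Y$, in fact lies in the subsheaf $\tau_+(\omega_Y, \pi^* B) \otimes \pi^* \sL^i$. To achieve this, I will pick $j \gg 0$ so that $\sH^j$ is globally generated on $Y$ and $\pi^* \sL^i \otimes \sH^j$ is sufficiently ample (which holds because $\pi^* \sL^i$ is nef and $\sH$ is ample). For such $j$, \autoref{prop.Tau+IndependentOfAmple} guarantees that $\tau_+(\omega_Y, \pi^* B) \otimes \pi^* \sL^i \otimes \sH^j$ is globally generated by $\myB^0(Y, \pi^* B, \omega_Y \otimes \pi^* \sL^i \otimes \sH^j)$. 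For any $t \in H^0(Y, \sH^j)$, \autoref{lemma.EasyInclusionForB0ByMultiplication} places $s \otimes t$ in this latter $\myB^0$, and hence in the sections of $\tau_+(\omega_Y, \pi^* B) \otimes \pi^* \sL^i \otimes \sH^j$. Since $\sH^j$ is globally generated, the subsheaf of $\omega_Y \otimes \pi^* \sL^i \otimes \sH^j$ generated by $\{ s \otimes t : t \in H^0(Y, \sH^j) \}$ equals $(\cO_Y \cdot s) \otimes \sH^j$, and tensoring with the line bundle $\sH^{-j}$ then yields $\cO_Y \cdot s \subseteq \tau_+(\omega_Y, \pi^* B) \otimes \pi^* \sL^i$, which is the desired claim about $s$.

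Having established this, the conclusion will follow by a sheaf-theoretic cleanup. Since $\tau_+(\omega_X, B) \otimes \sL^i$ is globally generated by sections in $\myB^0(X, B, \omega_X \otimes \sL^i)$, each of which has just been shown to lie in $H^0(Y, \tau_+(\omega_Y, \pi^* B) \otimes \pi^* \sL^i) = H^0(X, \pi_* \tau_+(\omega_Y, \pi^* B) \otimes \sL^i)$ (using the projection formula for $\sL^i$), we obtain
\[
\tau_+(\omega_X, B) \otimes \sL^i \subseteq \pi_* \tau_+(\omega_Y, \pi^* B) \otimes \sL^i
\]
as subsheaves of $\omega_X \otimes \sL^i$. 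Tensoring with $\sL^{-i}$ yields the stated inclusion.

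The main obstacle is that $\pi^* \sL$ is merely nef and big (not ample) on $Y$, so the global generation characterization of $\tau_+(\omega_Y, \pi^* B)$ cannot be applied directly to the pullback twist $\pi^* \sL^i$ alone. The auxiliary ample $\sH$ on $Y$, together with \autoref{lemma.EasyInclusionForB0ByMultiplication}, is precisely what circumvents this: one twists up by $\sH^j$ to reach the sufficiently ample regime, applies global generation there, and then untwists via the fact that $\sH^j$ is a globally generated line bundle.
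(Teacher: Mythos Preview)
Your proof is correct and follows essentially the same route as the paper's: both use \autoref{prop.B0StableBirational} to identify $\myB^0$ on $X$ and $Y$, then invoke \autoref{lemma.EasyInclusionForB0ByMultiplication} with a sufficiently ample twist on $Y$ to show these sections land in $\tau_+(\omega_Y,\pi^*B)\otimes\pi^*\sL^i$. The paper packages the argument by introducing the intermediate sheaf $J\subseteq\omega_Y$ generated by $\myB^0(Y,\pi^*B,\omega_Y\otimes\pi^*\sL)$ and proving $\tau_+(\omega_X,B)\subseteq\pi_*J\subseteq\pi_*\tau_+(\omega_Y,\pi^*B)$, whereas you argue section-by-section; the content is the same.
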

    \begin{proof}
        Fix $\sL = \cO_X(L)$ sufficiently ample on $X$.  Then we know that 
        \[
            \myB^0(X, B, \cO_X(K_X + L)) = \myB^0(Y, \pi^* B, \cO_Y(K_Y + \pi^*L))  
        \]
        by \autoref{prop.B0StableBirational}.  Now, $\myB^0(Y, \pi^* B, \cO_Y(K_Y + \pi^*L))$ generates a $\cO_Y$-subsheaf $J \otimes \pi^* \sL \subseteq \cO_Y(K_Y + \pi^* L)$.  
        Note that $\pi_* J \supseteq \tau_+(\omega_X, B)$ since the global generators of the $\cO_Y$-sheaf $J \otimes \pi^* \sL$ also generate the $\cO_X$-sheaf $\tau_+(\omega_X, B) \otimes \sL$.

        If we twist by a sufficiently ample $\sM = \cO_Y(M)$ on $Y$, then since we have the multiplication map of \autoref{lemma.EasyInclusionForB0ByMultiplication},
        \[
            \myB^0(Y, \pi^* B, \cO_Y(K_Y + \pi^* L)) \otimes H^0(Y, \cO_Y(M)) \to  \myB^0(Y, \pi^* B, \cO_Y(K_Y + \pi^* L + M)),
        \]
        we see that $J \otimes \cO_Y(\pi^* L + M) \subseteq \tau_+(\omega_Y, \pi^* B) \otimes \cO_Y(\pi^* L + M)$ and so $J \subseteq \tau_+(\omega_Y, \pi^* B)$.  Hence we have that 
        \[
            \tau_+(\omega_X, B) \subseteq \pi_* J \subseteq \pi_* \tau_+(\omega_Y, \pi^* B)
        \]
        as desired.
    \end{proof}

    \begin{remark}
        Note cannot expect equality here.  Indeed, \cite[Example 4.14]{CarvajalRojasMaPolstraSchwedeTucker} provides a counter example.  Explicitly, the singularity $R = W(k)\llbracket x,y \rrbracket/(p^2+y^3+z^5)$ where $k$ is an algebraically closed field of characteristic $p > 0$ is a rational Gorenstein singularity (see \cite[Proposition 8.1]{LipmanRationalSingularities}) and so if one resolves the singularity $\pi : Y \to X = \Spec R$ we see that $\tau_+(\omega_Y) = \omega_Y$ and since $R$ has rational singularities, $\pi_* \omega_Y = \omega_X$.  On the other hand, by \cite[Example 4.14]{CarvajalRojasMaPolstraSchwedeTucker} we see that $R$ is not a splinter and it follows that $\tau_+(\omega_R) \subsetneq \omega_R$.
    \end{remark}

    
    \begin{theorem}
        \label{thm.Tau+LocallyIndependentOfBlowup}
        Suppose that $(X, B)$ is a pair as above where $B$ is $\bQ$-Cartier and $U \subseteq X$ is an open set.  
        Now suppose that $g : X' \to X$ is a projective birational map that is an isomorphism over $U$ with $U' = g^{-1}(U)$.  Then we have that 
        \[
            \tau_+(\omega_{X'}, g^* B)|_{U'} = \tau_+(\omega_X, B)|_U.
        \]  
    \end{theorem}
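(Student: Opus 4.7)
The plan is to establish the two containments separately. The containment $\tau_+(\omega_X, B)|_U \subseteq \tau_+(\omega_{X'}, g^* B)|_{U'}$ is essentially formal: \autoref{prop.ContainmentBirationalMaps} gives $\tau_+(\omega_X, B) \subseteq g_* \tau_+(\omega_{X'}, g^* B)$, and since $g|_{U'}\colon U' \to U$ is an isomorphism, restricting to $U$ yields the claim.

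For the reverse containment, I argue stalkwise at a point $x \in U'$, which I identify with $g(x)$ via $g|_{U'}$. Fix $\sL = \cO_X(A)$ ample on $X$ and $\sL' = \cO_{X'}(A')$ ample on $X'$. The key geometric input is that $g^* \sL$ is big and semi-ample on $X'$ with ample model $X$, so its augmented base locus $\mathbf{B}_+(g^* A)$ coincides with the exceptional locus of $g$; since $x$ lies off this locus, there exist an integer $m > 0$ and a section $s \in H^0(X', m(g^* A - A'))$ with $s(x) \neq 0$. Now choose $n$ a large multiple of $m$ so that $\myB^0(X', g^* B, \omega_{X'} \otimes \sL'^n)$ generates $\tau_+(\omega_{X'}, g^* B) \otimes \sL'^n$ and simultaneously $\myB^0(X, B, \omega_X \otimes \sL^n)$ generates $\tau_+(\omega_X, B) \otimes \sL^n$, via \autoref{prop.B0ForBigiIsGlobalSections}. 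Multiplication by $s^{n/m}$ then yields a composite
\[
    \myB^0(X', g^* B, \omega_{X'} \otimes \sL'^n) \xrightarrow{\cdot s^{n/m}} \myB^0(X', g^* B, \omega_{X'} \otimes g^* \sL^n) = \myB^0(X, B, \omega_X \otimes \sL^n),
\]
where the first step uses \autoref{lemma.EasyInclusionForB0ByMultiplication} and the equality is the birational invariance recorded in \autoref{prop.B0StableBirational}. For any local generator $\alpha$ of $\tau_+(\omega_{X'}, g^* B) \otimes \sL'^n$ at $x$ arising from $\myB^0$, the image $\alpha \cdot s^{n/m}$ lies in $H^0(X, \tau_+(\omega_X, B) \otimes \sL^n)$, and since $s(x) \neq 0$ the map $\cdot s^{n/m}$ is an isomorphism of stalks at $x$. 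Thus the $\cO_{X',x}$-submodule generated by $\alpha$ corresponds isomorphically to the submodule generated by $\alpha \cdot s^{n/m}$ inside $\tau_+(\omega_X, B) \otimes \sL^n$ at $g(x)$; stripping off the line bundle twists gives $\tau_+(\omega_{X'}, g^* B)_x \subseteq \tau_+(\omega_X, B)_{g(x)}$, completing the reverse containment.

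The main obstacle is the identification $\mathbf{B}_+(g^* A) = \mathrm{Exc}(g)$, used to produce a section of a big line bundle nonvanishing at a specified point outside the exceptional locus. I would verify this directly: $g^* A$ is ample on $X' \setminus \mathrm{Exc}(g)$ (since $A$ is ample on $X$ and $g$ restricts to an isomorphism there) but has degree zero on any curve contracted by $g$, so equality holds; alternatively one can cite it as a standard fact in birational geometry. A minor additional care is needed when $x$ is a non-closed point of $U'$, but the same argument applies after requiring that $s$ be nonzero at the generic point of $\overline{\{x\}}$, which still follows from $\overline{\{x\}} \not\subseteq \mathbf{B}_+(g^*A)$.
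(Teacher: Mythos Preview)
Your approach is essentially the same as the paper's: both use \autoref{prop.ContainmentBirationalMaps} for the containment $\tau_+(\omega_X, B)|_U \subseteq \tau_+(\omega_{X'}, g^* B)|_{U'}$, and both obtain the reverse containment by producing, for each point of $U'$, an effective divisor (your $\Div(s^{n/m})$, the paper's $iE$) supported away from that point such that $g^*\sA^i(-iE)$ is ample on $X'$, then comparing the generating $\myB^0$'s via \autoref{prop.B0StableBirational} and \autoref{lemma.EasyInclusionForB0ByMultiplication}.

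The only substantive difference is in how this geometric input is justified. You package it as $\mathbf{B}_+(g^*A) = \mathrm{Exc}(g)$, but your direct verification is incomplete: observing that $g^*A$ has degree zero on contracted curves gives only $\mathrm{Exc}(g) \subseteq \mathbf{B}_+(g^*A)$, while the direction you actually need is the opposite one, and the phrase ``$g^*A$ is ample on $X' \setminus \mathrm{Exc}(g)$'' is not meaningful for a non-proper open subset and does not by itself produce the required section $s$. The standard references for $\mathbf{B}_+(g^*A) = \mathrm{Exc}(g)$ are stated over a field, so some care is needed over a complete local base. The paper supplies this constructively: after shrinking $U$ to the complement of an ample divisor, it realizes $g$ as the blowup of an ideal $\sJ$, adjusts $\sJ$ so that it is trivial at the chosen point of $U$, and sets $E$ to be the Cartier divisor with $\sJ \cdot \cO_{X'} = \cO_{X'}(-E)$, so that $g^*\sA(-E)$ is ample and $E \cap U' = \emptyset$. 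This is exactly the content of your section $s$, and once you have it your stalkwise argument goes through.
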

    \begin{proof}
        Fix $\sA = \cO_X(A)$ a sufficiently ample line bundle on $X$.  Notice that if $U_i$ is an open cover of $U$, and we prove the result for each $U_i$, then we have also proven it for $U$.  Hence we may replace $U$ by a smaller open set whenever it is helpful. In particular, we may assume that $U$ is affine and that $U$ is the complement of some ample divisor $H \sim A$ (possibly replacing $A$ by a multiple).  
        
        \begin{claim}
            Shrinking $U$ if necessary and making $\sA$ more ample if necessary, we may assume that there exists an effective Cartier divisor $E$ on $X'$ such that $E \cap U' = \emptyset$ and $\sA'=g^*\sA(-E) = \cO_{X'}(g^* A - E)$ is ample.
        \end{claim}
        \begin{proof}[Proof of claim]
            We know that $g$ is the blowup of some ideal sheaf $\sJ$ \cite[II 7.17]{Hartshorne}.  Fix a point $x \in U$.  If $\sJ$ does not vanish at $x$ we are done, since we can write $\sJ \cdot \cO_{X'} = \cO_{X'}(-E)$ (replacing $\sA$ with a power if necessary).  If it does vanish at $x$, we must replace $\sJ$ by an ideal sheaf, with the same blowup, that does not vanish at $x$.  

            We know that $\sJ$ is locally principal at $x$ since its blowup is an isomorphism at $x$.  We may assume that $\sJ \otimes \sA$ is globally generated (replacing $\sA$ by a power).  Choose $s \in H^0(X, \sJ \otimes \sA) \subseteq H^0(X, \sA)$ that generates $\sJ \otimes \sA$ at $x$.  Then if $D \sim A$ is the Cartier divisor corresponding to $s \in H^0(X, \sA)$, we have that the \emph{fractional} ideal $\sJ \otimes \cO_X(D)$ agrees with $\cO_X$ in a neighborhood of $x$.  Shrinking $U$ if necessary (which implicitly increases $\sA$ since $U$ is the complement of some divisor $H$ linearly equivalent to $A$), we can assume that $(\sJ \otimes \cO_X(D))|_U = \cO_U$.  It follows that $\sJ \otimes \sO_X(D - m H) \subseteq \cO_X$ for $m \gg 0$, it agrees with $\cO_X$ at $x$, and has the same blowup as $\sJ$.  This proves the claim.
        \end{proof}
            There is an effective Cartier divisor $F$ on $X$ such that $F\cap U=\emptyset$ and $g^*F\geq E$.  
            Now take $F = mH$ for some $m \gg 0$.
%
        
        Since  $\tau _+(\omega _X,B+kF)=\tau_+ (\omega _X,B)\otimes \mathcal O _X(-kF)$ by \autoref{lem.BasicFactsAboutTauAndAdj}, then $\tau _+(\omega _X,B+kF)|_U\cong \tau _+(\omega _X,B)|_U$. Similarly, $\tau_+ (\omega _{X'},g^*(B+kF))=\tau_+ (\omega _{X'},g^*B)\otimes \mathcal O _{X'}(-kg^*F)$ so that $\tau _+(\omega _{X'},g^*(B+kF))|_{U'}\cong \tau _+(\omega _{X'},g^*B)|_{U'}$.
        By \autoref{prop.B0StableBirational}, (see also \cite[Lemma 4.19]{BMPSTWW-MMP}),
        we have
        \begin{equation}
            \myB^0(X,B,\omega_X \otimes \sA^i )=\myB^0(X',g^* B,\omega_{X'} \otimes g^* \sA^i)\supset \myB ^0(X',g^* B,  \omega_{X'} \otimes \sA'^i   )
        \end{equation}
        where the last inclusion is induced by the natural map $\sA'^i\cong \mathcal O _{X'}(i(g^*A-E))
        \subseteq  \mathcal O _{X'}(ig^*A)$.
        Therefore, taking $i \gg 0$ we obtain that 
        \[ 
            \tau _+(\omega _{X'},g^* B)|_{U'}\subseteq \tau _+(\omega _X,B)|_U.
        \]
        The reverse inclusion follows from \autoref{prop.ContainmentBirationalMaps}.
    \end{proof}

    \begin{lemma}
        \label{lem.DivisorOnAnOpenQCartierCompactification}
        Suppose $X$ is a normal projective integral scheme over a Noetherian ring $R$ and $U \subseteq X$ is an affine open set, the complement of an ample effective Cartier divisor $H$.  Suppose that $B_U \geq 0$ is a $\bQ$-trivial\footnote{Meaning that $nB_U \sim 0$ for some $n > 0$.  This is not a particularly restrictive condition on an affine open set, since one may always restrict to an affine open cover trivializing any $\bQ$-Cartier divisor.} divisor.  Then there exists a $\bQ$-Cartier divisor $B$ on $X$ such that $B|_U = B_U$.
    \end{lemma}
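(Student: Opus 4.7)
The plan is to take the closure of $B_U$ in $X$ and then correct it by a divisor supported on $X \setminus U$ in order to make it $\bQ$-Cartier. Since $X$ is integral and $U \subseteq X$ is dense, we have $K(X) = K(U)$, so any rational function on $U$ is canonically a rational function on $X$ and vice versa. Also, since $U = X \setminus H$ with $H$ an effective Cartier divisor, $X \setminus U = \Supp(H)$ is a union of prime divisors, so every divisor supported on $X \setminus U$ is a genuine Weil divisor on $X$.

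With these observations, I would proceed as follows. Let $D := \overline{B_U}$ be the (effective) Weil divisor on $X$ obtained as the closure of $B_U$, so $D|_U = B_U$. Since $nB_U \sim 0$ on $U$, choose $f \in K(U)^\times = K(X)^\times$ with $\Div_U(f) = nB_U$. The principal divisor $\Div_X(f)$ then satisfies $\Div_X(f)|_U = nB_U = nD|_U$, so
\[
    G \ := \ nD - \Div_X(f)
\]
is a Weil divisor supported on $X \setminus U$. Now set
\[
    B \ := \ D - \tfrac{1}{n} G.
\]
By construction, $nB = nD - G = \Div_X(f)$ is principal, hence Cartier, so $B$ is $\bQ$-Cartier; and since $G|_U = 0$ we have $B|_U = D|_U = B_U$, as required.

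There really is no main obstacle here: the argument is an exercise in bookkeeping that uses only that $X$ is integral and that $X \setminus U$ has pure codimension one. Note that the $B$ produced this way need not be effective, but the statement does not demand it. (If one ever wanted an effective representative, one could replace $B$ by $B + mH$ for $m \gg 0$, which remains $\bQ$-Cartier and still restricts to $B_U$ on $U$.) Also, ampleness of $H$ is not used in the lemma itself; only that $U$ is the complement of an effective Cartier divisor.
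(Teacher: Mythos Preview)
Your proof is correct and is essentially the same as the paper's: both use that $nB_U = \Div_U(f)$ and extend $f$ to $K(X)$ to produce a $\bQ$-Cartier divisor $\tfrac{1}{n}\Div_X(f)$ restricting to $B_U$. Your detour through the closure $D=\overline{B_U}$ is harmless but unnecessary, since your final $B$ is exactly $\tfrac{1}{n}\Div_X(f)$; the paper goes there directly and then adds $\tfrac{m}{n}H$ to make $B$ effective (which you mention only parenthetically, but which is in fact how the lemma is applied immediately afterward).
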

    \begin{proof}
        Write $D_U = nB_U = \Div(f)$ for some $f \in \Gamma(U, \cO_U)$.  Then viewing $f \in K(X)$, it defines a rational divisor $D'$ on $X$ with $D'|_U = nB_U$.  However, $D'$ is possibly not effective but $D' + mH$ is effective for some $m > 0$ and so we set $B = {1 \over n}(D' + mH)$.
    \end{proof}

    \begin{definition}
        Suppose that $(R, \fram)$ is a complete local ring with positive characteristic residue field.  Suppose further that $T$ is a finite type $R$-algebra.  Let $U = \Spec T$ and suppose that $B_U \geq 0$ is a $\bQ$-trivial divisor on $U$.  Take a projective scheme $X$ over $\Spec R$ where $\Spec T = U = X \setminus V(H)$ is an affine open subset, the complement of a very ample divisor.  Suppose that $B \geq 0$ is  a $\bQ$-Cartier-divisor on $X$ such that $B|_U = B_U$.        
        
        Then define 
        \[
            \tau_+(\omega_{U}, B_U) = \tau_+(\omega_X, B)|_U.
        \]
        We show that this definition is independent of the choices  of $X$ and $B$.
    \end{definition}
    \begin{proposition}
        \label{prop.Tau+IsIndependentOfCompactifications}
        The sheaf $\tau_+(\omega_{U}, B_U)$ is independent of the compactifications $X$ and $B$.        
    \end{proposition}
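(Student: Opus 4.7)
The plan is to reduce the question to a single ambient scheme via a common compactification and then to dominate the discrepancy between the two pulled-back boundaries by an auxiliary effective Cartier divisor. First, let $W \subseteq X_1 \times_{R} X_2$ be the closure of the image of $U$ under the diagonal embedding induced by the two open immersions $U \hookrightarrow X_i$, and let $Y \to W$ denote its normalization. Then $Y$ is a normal integral projective scheme over $R$, the two projections induce projective birational maps $\pi_i \colon Y \to X_i$, and each $\pi_i$ restricts to an isomorphism over $U$; write $U' \subseteq Y$ for the common open $\pi_i^{-1}(U)$, which we identify with $U$. The divisors $\pi_i^* B_i$ are effective $\bQ$-Cartier on $Y$, and both restrict to $B_U$ on $U'$. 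Applying \autoref{thm.Tau+LocallyIndependentOfBlowup} to each $\pi_i$ yields $\tau_+(\omega_{X_i}, B_i)|_U = \tau_+(\omega_Y, \pi_i^* B_i)|_{U'}$, so it suffices to establish $\tau_+(\omega_Y, \pi_1^* B_1)|_{U'} = \tau_+(\omega_Y, \pi_2^* B_2)|_{U'}$.

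Next, I would prove the following key lemma: if $D_1, D_2 \geq 0$ are effective $\bQ$-Cartier divisors on $Y$ with $D_1|_V = D_2|_V$ on an open $V \subseteq Y$, then $\tau_+(\omega_Y, D_1)|_V = \tau_+(\omega_Y, D_2)|_V$. The difference $G := D_1 - D_2$ is then a $\bQ$-Cartier divisor supported on $Y \setminus V$. Choose an effective Cartier divisor $E$ on $Y$ with $\Supp E = Y \setminus V$---for instance, the pullback to $Y$ of the very ample boundary divisor $H$ with $U = X_1 \setminus V(H)$---and set $H_k := kE$ for $k \gg 0$ large enough that $H_k$ dominates the positive and negative parts of $G$ as Weil divisors. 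Then both $D_2 + H_k \geq D_1$ and $D_1 + H_k \geq D_2$ hold.

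To conclude, I would combine \autoref{lem.BasicFactsAboutTauAndAdj} \autoref{lem.BasicFactsAboutTauAndAdj.a} (monotonicity) with \autoref{lem.BasicFactsAboutTauAndAdj} \autoref{lem.BasicFactsAboutTauAndAdj.b} (Cartier shift). Monotonicity and $D_2 + H_k \geq D_1$ give $\tau_+(\omega_Y, D_2 + H_k) \subseteq \tau_+(\omega_Y, D_1)$, while Cartier shift applied to the effective Cartier divisor $H_k$ gives $\tau_+(\omega_Y, D_2 + H_k) = \tau_+(\omega_Y, D_2) \otimes \cO_Y(-H_k)$. Restricting to $V$, where $\cO_Y(-H_k)|_V = \cO_V$ because $\Supp H_k \subseteq Y \setminus V$, we obtain $\tau_+(\omega_Y, D_2)|_V \subseteq \tau_+(\omega_Y, D_1)|_V$, and the reverse inclusion follows by the symmetric argument using $D_1 + H_k \geq D_2$. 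The main obstacle is that $G$ itself is only $\bQ$-Cartier and cannot be fed directly into the Cartier-shift lemma; the squeeze above sidesteps this because only the integer-coefficient Cartier divisor $H_k$ ever appears in the Cartier-shift step, with monotonicity absorbing the slack.
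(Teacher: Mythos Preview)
Your argument is correct and reaches the same conclusion, but the route differs from the paper's in one notable respect. Both proofs pass to a common roof $Y$ dominating $X_1$ and $X_2$ and invoke \autoref{thm.Tau+LocallyIndependentOfBlowup} to reduce to comparing two effective $\bQ$-Cartier divisors on $Y$ that agree on $U'$. At that point the paper takes a finite cover $Y' \to Y$ on which the pulled-back divisors become honestly Cartier and then uses the transformation rule under finite maps (\autoref{thm.TransofmrationRulesUnderFiniteMaps}) together with the Cartier shift to conclude. You instead sandwich the $\bQ$-Cartier difference $D_1 - D_2$ between $\pm H_k$ for a single effective \emph{Cartier} divisor $H_k$ supported on the boundary, and combine monotonicity with the Cartier shift; this avoids the finite-cover/trace step entirely. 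Your approach is slightly more elementary in that it only uses \autoref{lem.BasicFactsAboutTauAndAdj}\autoref{lem.BasicFactsAboutTauAndAdj.a}--\autoref{lem.BasicFactsAboutTauAndAdj.b}, whereas the paper's route illustrates how the finite-map transformation rule can be used to linearize $\bQ$-Cartier data. One small point worth making explicit in your write-up: the assertion that $kE$ eventually dominates the positive and negative parts of $G$ uses that $Y$ is normal, so each prime component of $\Supp G \subseteq \Supp E$ already appears in the Cartier divisor $E$ with coefficient at least $1$.
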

    \begin{proof}
        Suppose first that $B_1, B_2$ are two effective $\bQ$-Cartier divisors on $X$ that restrict to $B_U$ on $U$.  Let $\pi : Y \to X$ be a finite cover from a normal integral scheme so that both $\pi^* B_1$ and $\pi^* B_2$ are Cartier.  Let $V = \pi^{-1}(U)$.  Since $\Tr : \pi_* \omega_V \to \omega_X$ satisfies $\Tr(\tau_+(\omega_Y, \pi^* B_i)) = \tau_+(\omega_X, B_i)$ for $i = 1,2$ by \autoref{thm.TransofmrationRulesUnderFiniteMaps}, it suffices to prove the result on $V \subseteq Y$.  In particular, replacing $X$ by $Y$, we may assume that $B_U, B_1, B_2$ are all Cartier.  Furthermore, we may assume that $B_2 \geq B_1$ with $B_2 = B_1 + G$ where $G|_U = 0$.  Then we have that 
        \[
            \tau_+(\omega_X, B_2)|_U = \big(\tau_+(\omega_X, B_1) \otimes \cO_X(-G)\big)|_U = \tau_+(\omega_X, B_1)|_U
        \]
        and we have thus shown that our definition is independent of the choice of $B$.


        Suppose now that $X_1, X_2$ are two compactifications of $U$.  Passing to finite covers as above, we may assume that $B_U, B_1, B_2$ are all Cartier divisors.  Since $\tau_+(\omega_{X_i}, B_i) = \tau_+(\omega_{X_i}) \otimes \cO_X(-B_i)$, it suffices to show that $\tau_+(\omega_{X_1})|_U = \tau_+(\omega_{X_2})|_U$.  We then have that $X_1 \dashrightarrow X_2$ is a rational map between projective schemes over a Noetherian local ring $R$.  
            We resolve the indeterminacies of this rational map.  That is, we have 
            \[
                \xymatrix{
                    & X_3 \ar[dl] \ar[dr] & \\
                    X_1 \ar@{.>}[rr]_{\mu} & &  X_2
                }
            \]
            where both diagonal arrows are blowups and are isomorphisms over $U$.  But now we are done by \autoref{thm.Tau+LocallyIndependentOfBlowup}.  
    \end{proof}

    \begin{corollary}
        Suppose $X$ is a quasi-projective normal integral scheme over a complete local ring ring $(R, \fram)$.  Further suppose that $B \geq 0$ on $X$ is a $\bQ$-Cartier divisor.  Then there exists a well defined $\tau_+(\omega_X, B)$ which restricts to $\tau_+(\omega_{X'}, B|_{X'})$ for any open $X' \subseteq X$.  In particular, if $X = \Spec T$ is affine, then we can define $\tau_{+}(\omega_X, B)$ commuting with localization at a single element.  
    \end{corollary}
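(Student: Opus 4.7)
The plan is to cover $X$ by affine opens on which $B$ is $\bQ$-trivial, apply the affine definition from \autoref{prop.Tau+IsIndependentOfCompactifications} on each piece, and show the resulting sheaves glue. Since $B$ is $\bQ$-Cartier, I can choose an affine open cover $\{U_i\}$ of $X$ with $n_i B|_{U_i} \sim 0$ for some $n_i > 0$, so that $\tau_+(\omega_{U_i}, B|_{U_i})$ is defined on each $U_i$. The statement of the corollary will then be reduced to the following key compatibility: if $U = \Spec T$ with $B|_U$ $\bQ$-trivial, and $W = \Spec T_f \subseteq U$ is a distinguished open, then
\[
    \tau_+(\omega_U, B|_U)|_W = \tau_+(\omega_W, B|_W).
\]
Granting this, covering $U_i \cap U_j$ by affines $W$ that are simultaneously distinguished opens of both $U_i$ and $U_j$ forces $\tau_+(\omega_{U_i}, B|_{U_i})|_W = \tau_+(\omega_W, B|_W) = \tau_+(\omega_{U_j}, B|_{U_j})|_W$, and the sheaves glue to $\tau_+(\omega_X, B)$. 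The last sentence of the corollary, concerning localization at a single element, is exactly this compatibility.

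To prove the key compatibility, I would pick a projective compactification $X$ of $U$ with $U = X \setminus V(H)$ for some ample effective Cartier divisor $H$, and extend $B|_U$ to a $\bQ$-Cartier divisor $B_X$ on $X$ using \autoref{lem.DivisorOnAnOpenQCartierCompactification}. By definition, $\tau_+(\omega_U, B|_U) = \tau_+(\omega_X, B_X)|_U$. The difficulty is that $W$ is not immediately realized inside $X$ as the complement of an ample Cartier divisor: the closure $\bar F \subseteq X$ of $V(f) \subseteq U$ need not be Cartier, and $(X \setminus U) \cup \bar F$ need not support an ample divisor. I would resolve this by taking a projective birational $\pi : X' \to X$ which is an isomorphism over $U$ and on which the strict transform $\bar F'$ of $\bar F$ becomes a Cartier divisor (for instance, a normalized blowup of the ideal sheaf of $\bar F$). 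By openness of the ample cone, $n\pi^*H + \bar F'$ is an ample Cartier divisor on $X'$ for $n \gg 0$, and its support equals $X' \setminus W$.

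With $X'$ in hand as a valid compactification of $W$, \autoref{thm.Tau+LocallyIndependentOfBlowup} applied to $\pi$ gives $\tau_+(\omega_X, B_X)|_U = \tau_+(\omega_{X'}, \pi^* B_X)|_U$. Restricting further to $W \subseteq U$ yields
\[
    \tau_+(\omega_U, B|_U)|_W = \tau_+(\omega_{X'}, \pi^* B_X)|_W = \tau_+(\omega_W, B|_W),
\]
where the last equality is the definition of $\tau_+$ on the affine $W$ via the compactification $X'$, and its independence from this choice is supplied by \autoref{prop.Tau+IsIndependentOfCompactifications}. The main obstacle is this construction of a compactification of $W$ as the complement of an ample divisor inside a modification of a given compactification of $U$; this requires both a birational modification to make the boundary Cartier and the positivity argument to achieve ampleness. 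Once this compatibility is in place, the gluing and the restriction statement for an arbitrary open $X' \subseteq X$ follow by applying it on a sufficiently fine distinguished affine cover.
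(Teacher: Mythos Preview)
Your overall strategy---cover by affines on which $B$ is $\bQ$-trivial, then verify compatibility on distinguished opens and glue---is exactly right and is what the paper's terse proof is invoking when it says ``check this on a sufficiently fine affine cover'' and appeals to \autoref{prop.Tau+IsIndependentOfCompactifications}. You are also correct that the substance lies in the compatibility $\tau_+(\omega_U, B|_U)|_W = \tau_+(\omega_W, B|_W)$ for $W = \Spec T_f$.

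However, the difficulty you identify is illusory, and the blowup detour you propose both is unnecessary and has a gap. Since $f \in \Gamma(U,\cO_U)$ and $U = X \setminus V(H)$ with $H$ very ample, we have $f \in H^0(X, \cO_X(mH))$ for some $m > 0$; then $G := \Div_X(f) + mH$ is an effective Cartier divisor linearly equivalent to $mH$, hence very ample, and for $m \gg 0$ its support is exactly $V(H) \cup \overline{V(f)}$. Thus $W = X \setminus V(G)$ is already the complement of a very ample divisor in the \emph{same} compactification $X$, so \autoref{prop.Tau+IsIndependentOfCompactifications} applies directly to $W$ with compactification $X$ and gives $\tau_+(\omega_W, B|_W) = \tau_+(\omega_X, B_X)|_W = \tau_+(\omega_U, B|_U)|_W$.

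As for the gap: your claim that $n\pi^*H + \bar F'$ is ample on the blowup $X'$ for $n \gg 0$ ``by openness of the ample cone'' does not follow. The class $\pi^*H$ is big and nef but not ample (it is trivial on $\pi$-exceptional curves), so it lies on the boundary of the nef cone of $X'$; scaling it up and adding a fixed effective class does not push it into the interior. The correct positivity on a blowup goes the other way: if $\cI_{\bar F}\cdot\cO_{X'} = \cO_{X'}(-G)$ then $-G$ is $\pi$-ample and $n\pi^*H - G$ is ample for $n \gg 0$, but that divisor is not effective with the required support. Moreover, the normalized blowup of $\bar F$ makes the total transform $G$ Cartier, not the strict transform $\bar F'$, so that step also needs care. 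Fortunately, none of this is needed once you observe the previous paragraph.
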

    \begin{proof}
        Since $\tau_+(\omega_X, B)$ is a subsheaf of $\omega_X$, it suffices to check this on a sufficiently fine affine cover.   But now the result follows from \autoref{prop.Tau+IsIndependentOfCompactifications}.  
    \end{proof}

    \begin{lemma}
        \label{lem.QuasiProjectiveTwistedTauOmega}
        Suppose $X$ is quasi-projective over a complete local ring $(R, \fram)$ with residual characteristic $p > 0$ and $B \geq 0$.  Suppose further that $G \geq 0$ is a Cartier divisor.  Then 
        \[
            \tau_+(\omega_X, B + G) = \tau_+(\omega_X, B) \otimes \cO_X(-G).
        \]
    \end{lemma}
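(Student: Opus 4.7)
The plan is to reduce to the projective setting, where the identity follows from \autoref{lem.BasicFactsAboutTauAndAdj}\autoref{lem.BasicFactsAboutTauAndAdj.b}, supplemented by the transformation rule \autoref{thm.TransofmrationRulesUnderFiniteMaps} and the affine-local construction of $\tau_+$ established above. Since both sides are subsheaves of $\omega_X$ that restrict well to open subsets (by the definition of $\tau_+$ on quasi-projective schemes), I may pass to a sufficiently fine affine open cover and assume $X = U = \Spec T$ is affine. Shrinking $U$ further, I may also assume that $B$ is $\bQ$-trivial on $U$, which is possible because $B$ is $\bQ$-Cartier, and that $G = \Div_U(g)$ is principal on $U$ for some $g \in T$, which is possible because $G$ is Cartier and hence locally principal.

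Fix a normal integral projective compactification $\overline X$ of $U$ over $\Spec R$. By \autoref{lem.DivisorOnAnOpenQCartierCompactification}, both $B$ and $G$ extend to effective $\bQ$-Cartier divisors $\tilde B$ and $\tilde G$ on $\overline X$ with $\tilde B|_U = B$ and $\tilde G|_U = G$. Choose a finite surjective map $\pi : Y \to \overline X$ from a normal integral scheme such that $\pi^* \tilde B$ and $\pi^* \tilde G$ are both Cartier on $Y$. Applying \autoref{lem.BasicFactsAboutTauAndAdj}\autoref{lem.BasicFactsAboutTauAndAdj.b} on $Y$ gives
\[
\tau_+(\omega_Y, \pi^* \tilde B + \pi^* \tilde G) = \tau_+(\omega_Y, \pi^* \tilde B) \otimes \cO_Y(-\pi^* \tilde G)
\]
as subsheaves of $\omega_Y$.

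Next, apply $\Tr\,\pi_*$ and restrict to $U$. Set $V = \pi^{-1}(U)$ and $\pi_V = \pi|_V$. For the left-hand side, \autoref{thm.TransofmrationRulesUnderFiniteMaps} together with the affine-local definition of $\tau_+$ yields $\tau_+(\omega_U, B+G)$. For the right-hand side, note that $\cO_Y(-\pi^* \tilde G)|_V = \pi_V^* \cO_U(-G)$ because $G$ is Cartier on $U$, so the projection formula produces
\[
(\pi_V)_*\bigl(\tau_+(\omega_V, \pi_V^* B) \otimes \cO_V(-\pi_V^* G)\bigr) = (\pi_V)_* \tau_+(\omega_V, \pi_V^* B) \otimes \cO_U(-G).
\]
Taking trace and invoking \autoref{thm.TransofmrationRulesUnderFiniteMaps} once more gives $\tau_+(\omega_U, B) \otimes \cO_U(-G)$, matching the left-hand side.

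The main obstacle is that the extension $\tilde G$ is typically only $\bQ$-Cartier on $\overline X$, so the Cartier-twist identity and the projection formula cannot be invoked directly on $\overline X$. This is resolved by passing to the finite cover $Y$ on which $\pi^*\tilde G$ becomes Cartier, and then restricting to $U$, where $G$ itself is Cartier so that the projection formula applies cleanly and produces the desired identity on $U$.
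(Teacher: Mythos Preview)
Your argument is correct, but it takes a longer route than the paper. The paper observes that once $G$ is principal on the affine chart $U$ (say $G=\Div_U(g)$), the rational function $g$ defines a \emph{Cartier} divisor $\overline{G}$ on the compactification $\overline{X}$: indeed $\Div_{\overline{X}}(g)$ is principal and, after adding a multiple of the ample Cartier divisor $H$ defining $\overline{X}\setminus U$, becomes effective while remaining Cartier. With $\overline{G}$ Cartier on $\overline{X}$, \autoref{lem.BasicFactsAboutTauAndAdj}\autoref{lem.BasicFactsAboutTauAndAdj.b} applies directly on $\overline{X}$ and restricting to $U$ finishes the proof in one line.

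You instead invoke \autoref{lem.DivisorOnAnOpenQCartierCompactification} to get only a $\bQ$-Cartier extension $\tilde G$, then pass to a finite cover $Y$ where $\pi^*\tilde G$ becomes Cartier, and descend via the transformation rule \autoref{thm.TransofmrationRulesUnderFiniteMaps} and the projection formula on $U$. This works, but the finite cover is unnecessary: the point of shrinking so that $G$ is principal is precisely that the extension to $\overline{X}$ can already be taken Cartier (this is the $n=1$ case of the argument in \autoref{lem.DivisorOnAnOpenQCartierCompactification}). One minor remark: when you invoke \autoref{thm.TransofmrationRulesUnderFiniteMaps} ``once more'' on $U$ and $V$, strictly speaking that theorem is stated in the projective setting, so what you are really using is the restriction to $U$ of the projective identity $\Tr\bigl(\pi_*\tau_+(\omega_Y,\pi^*\tilde B)\bigr)=\tau_+(\omega_{\overline X},\tilde B)$, together with the affine-local definition of $\tau_+$. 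This is fine, but worth saying explicitly.
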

    \begin{proof}
        This may be checked locally and so we may assume that $G$ is a principal divisor ($G \sim 0$).  Thus we may compactify to $\overline{X}$, $\overline{B}$ and $\overline{G}$ where $\overline{B}$ is $\bQ$-Cartier and $\overline{G}$ is Cartier.  Now the result follows from \autoref{lem.BasicFactsAboutTauAndAdj} \autoref{lem.BasicFactsAboutTauAndAdj.b}.
    \end{proof}

    \begin{definition}
        Suppose that $X$ is quasi-projective over a complete local ring $(R, \fram)$.  Suppose $\Delta \geq 0$ is a $\bQ$-divisor so that $K_X + \Delta$ is $\bQ$-Cartier.  Then we define 
        \[
            \tau_+(\cO_X, \Delta) = \tau_+(\omega_X, K_X + \Delta + G) \otimes \cO_X(G)
        \]
        where $G$ is a Cartier divisor so that $K_X + \Delta + G \geq 0$.    It follows from \autoref{lem.QuasiProjectiveTwistedTauOmega} that this is independent of the choice of $G$ in the definition.
    \end{definition}

    \begin{remark}
        Suppose that $T$ is a normal domain essentially of finite type over a complete local ring $(R, \fram)$ with residue field of characteristic $p > 0$.  Then for any $B \geq 0$, a $\bQ$-Cartier divisor on $\Spec T$, we can define $\tau_+(\omega_T, B)$ whose formation commutes with localization.  The point is we can unlocalize to a finite type $R$-algebra domain where $B$ is still $\bQ$-Cartier.  Likewise we can define $\tau_+(T, \Delta)$ where $K_T + \Delta$ is $\bQ$-Cartier.
    \end{remark}

    \begin{remark}
        Suppose for instance that $X = \Spec R$.  Then $\tau_+(\cO_X, B)$ is nothing other than the test ideal $\tau_{\widehat{R^+}}(R, B)$ as defined in \cite{MaSchwedeSingularitiesMixedCharBCM}.  We obtain a theory of localization then, but it is nothing more than localizing this ideal $\tau_{\widehat{R^+}}(R, B)$.  
    \end{remark}


\section{Effective global generation of $\tau_+$ and a Skoda-type theorem}

    Fix $X$ a normal integral scheme projective over a complete local Noetherian domain $(R, \fram)$.  Suppose $H^0(X, \cO_X) = R$.  Suppose that $X_{\fram}$ is the special fiber and set $n_0 = \dim X_{\fram}$.

    \begin{theorem}[Effective global generation]\label{t-eff}
        With notation as above, if $\sA = \cO_X(A)$ is a globally generated ample line bundle, $\Gamma \geq 0$ is a $\bQ$-divisor and $L$ is a divisor such that $L - \Gamma$ is big and semi-ample.  Then 
        \[
            \tau_+(\omega_X, \Gamma) \otimes \cO_X(nA + L)
        \]
        is globally generated by $\myB^0(X, \Gamma, \omega_X \otimes \cO_X(nA + L))$ for all $n \geq n_0 = \dim X_{\fram}$.
    \end{theorem}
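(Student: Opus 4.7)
The plan is a Skoda-type / Castelnuovo--Mumford-type reduction, with Bhatt's vanishing \autoref{BhattVanishing} playing on $X^+$ the role that Kodaira--Nadel vanishing plays in the characteristic-zero argument of \cite[Proposition 9.4.26]{LazarsfeldPositivity2}. The theorem will follow from the Skoda-type surjection
\[
    V \otimes_R \myB^0\bigl(X,\Gamma,\omega_X\otimes\cO_X(kA+L)\bigr) \twoheadrightarrow \myB^0\bigl(X,\Gamma,\omega_X\otimes\cO_X((k{+}1)A+L)\bigr)
\]
for every $k \geq n_0$, where $V = H^0(X,\sA)$ and the map is the multiplication of \autoref{lemma.EasyInclusionForB0ByMultiplication}. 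Iterating this, for every $m \geq n \geq n_0$ the image of the level-$n$ group, multiplied by $\Sym^{m-n} V$, covers the level-$m$ group. Choosing $m$ large enough that \autoref{prop.B0ForBigiIsGlobalSections} applies (and replacing the role of $\sL$ by $\sA$ via \autoref{prop.Tau+IndependentOfAmple}), we conclude that the level-$n$ $\myB^0$ generates the same $\cO_X$-submodule of $\omega_X \otimes \cO_X(mA+L)$ as the level-$m$ $\myB^0$, after tensoring by $\sA^{m-n}$. Because $\sA$ is globally generated by $V$, this forces the level-$n$ $\myB^0$ to already globally generate $\tau_+(\omega_X,\Gamma)\otimes\cO_X(nA+L)$.

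To produce the Skoda surjection, I would fix generators $s_0,\ldots,s_N \in V$ of $\sA$ and form the Koszul complex
\[
    \sK_\bullet \,:\, 0 \to \wedge^{N+1} V \otimes \sA^{-(N+1)} \to \cdots \to V \otimes \sA^{-1} \to \cO_X \to 0,
\]
which is exact because $\sA$ is globally generated. Pulling $\sK_\bullet$ back along $\nu : X^+ \to X$ and tensoring with $\cO_{X^+}(\nu^*\Gamma - \nu^*((k{+}1)A+L))$, the definition of $\myB^0$ in \autoref{def.+StableSections} together with Matlis duality recasts the desired surjectivity as injectivity of a natural comparison map in $H^d_\fram(\myR\Gamma(X^+,-))$. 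Splitting the twisted Koszul complex into short exact sequences and running the associated long exact sequences of $\myR\Gamma_\fram\,\myR\Gamma(X^+,-)$ reduces the injectivity to the vanishing of
\[
    H^i\!\bigl(\myR\Gamma_\fram\,\myR\Gamma\bigl(X^+,\;\cO_{X^+}(\nu^*\Gamma - \nu^*((k{+}j)A+L))\bigr)\bigr)
\]
for $1 \le j \le N+1$ and $i < d$. Since $A$ is ample and globally generated and $L - \Gamma$ is big and semi-ample, $(k{+}j)A + L - \Gamma$ is big and semi-ample for every $j \ge 1$, so Bhatt's vanishing \autoref{BhattVanishing} kills all of these obstruction groups.

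The main obstacle will be extracting the sharp bound $n \ge n_0 = \dim X_\fram$ rather than $n \ge \dim X$ or $n \ge N$. The key observation is that $H^i_\fram$ vanishes automatically for $i > n_0$ on dimensional grounds, so only the first $n_0$ boundary maps in the Koszul filtration actually need Bhatt vanishing in order to push sections down; the higher terms of $\sK_\bullet$ contribute trivially, regardless of how large $N$ is. The remaining issues are more routine: one reduces to the case where $K_X + nA + L$ is Cartier via the trick of \autoref{rem.MakingKX+MCartier} and \autoref{lem.SimplePropertiesOfB0} \autoref{lem.SimplePropertiesOfB0.3}, and one handles the fact that $\tau_+(\omega_X,\Gamma)$ is a subsheaf rather than a line bundle by consistently working through the global-generation characterization of \autoref{prop.B0ForBigiIsGlobalSections}.
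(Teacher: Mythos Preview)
Your overall architecture---Koszul complex on generators of $\sA$, pull back to $X^+$, invoke Bhatt's vanishing, Matlis-dualize to get the multiplication surjection, then iterate and use \autoref{prop.Tau+IndependentOfAmple}---is exactly the paper's strategy. The gap is in your mechanism for extracting the sharp bound $n \geq n_0 = \dim X_{\fram}$.

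Your claimed ``dimensional'' vanishing $H^i_{\fram}\bigl(\myR\Gamma(X^+,-)\bigr) = 0$ for $i > n_0$ is false. The spectral sequence $H^p_{\fram}\bigl(H^q(X^+,\sF)\bigr) \Rightarrow H^{p+q}_{\fram}\bigl(\myR\Gamma(X^+,\sF)\bigr)$ only gives vanishing for $p+q > \dim R + n_0$; since $d = \dim X \leq \dim R + n_0$ with equality in the typical case, the groups $H^d_{\fram}$ you are manipulating are exactly the top nonvanishing ones, and nothing above $n_0$ dies automatically. Without this vanishing, your Koszul complex on $N+1$ generators, twisted at level $k+1$, produces the needed injectivity only when the dualized complex has all its terms in the range where Bhatt applies---and that forces $k \geq N$, not $k \geq n_0$.

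The paper obtains the sharp bound by a completely different device: it first passes to a finite \'etale extension of $R$ to make the residue field infinite (legitimate by \autoref{prop.FiniteEtalExtensionOfRAndB^0} and \autoref{prop.Tau+BaseChangesAlongEtaleMapsOfTheBase}), and then uses the fact that a globally generated line bundle on the $n_0$-dimensional closed fiber over an infinite field can be generated by $n_0+1$ sections. One then builds the Koszul complex on exactly $n+1 \leq$ (however many you like, but $\geq n_0+1$) generators, so that \emph{every} term $\sG_j$ of the dualized complex carries a twist $-(n+1-j)A - L + \Gamma$ with $(n+1-j)A + (L-\Gamma)$ big and semi-ample, and Bhatt applies throughout. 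The number of generators, not a cohomological-dimension trick, is what pins down $n_0$.
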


    Our proof is inspired by similar arguments in characteristic $p > 0$ found in \cite{SchwedeTuckerTestIdealsNonPrincipal} which themselves are based upon arguments for multiplier ideals found in \cite{LazarsfeldPositivity2}, also see \cite[Section 1]{EinLazarsfeldGeometricEffectiveNullstellensatz}.

    \begin{proof}        
        Set $R = H^0(X, \cO_X)$.  If the residue field $k = R/\fram$ is infinite, we may fix global generators $y_1, \dots, y_{n+1} \in H^0(X, \sA)$ where $n \geq n_0$ and $n_0$ is the dimension of the fiber.  The point is that sections in $H^0(X, \sA)$ globally generate $\sA$ if and only if their images globally generate $\sA|_{X_{\fram}}$ which is projective of dimension $n$ over an infinite field.  

        If $k$ is not infinite, we can find $n+1$ generating sections $y_1, \dots, y_{n+1}$ in $H^0(X, \sA) \otimes_R S = H^0(X_S, \sA_S)$ for some finite \'etale extension $R \subseteq S$ (inducing a finite separable extension of $k$).  Since global generation of a sheaf is unaffected by faithfully flat base change, by using \autoref{prop.FiniteEtalExtensionOfRAndB^0} and \autoref{prop.Tau+BaseChangesAlongEtaleMapsOfTheBase}, we may replace $R$ by $S$, $X$ by $X_S = X \times_{\Spec R} \Spec S$, etc. and so either way we have our desired global generators $y_i$.

        Consider the Skoda/Koszul complex determined by the $y_i$:
        \[
            0 \to \sF_{n+1} \to \sF_{n} \to \dots \to \sF_1 \to \sF_0 \to 0
        \]
        where $\sF_i = \cO_X(-iA)^{\oplus {n+1 \choose i}}$.  Since $A$ is a Cartier divisor, this complex is exact, see \cite[Proposition 1.6.5(c)]{BrunsHerzog}.  Let $X^+$ denote the absolute integral closure of $X$ as in \autoref{subsec.VanishingInMixedChar},  and use $\rho : X^+ \to X$ to denote the canonical map.  We pull back our Skoda/Koszul sequence to $X^+$.  Twisting by the line bundle $\cO_{X^+}(\rho^*((n+1)A + L - \Gamma))$ and applying the functor $\sHom_{\cO_{X^+}}(\bullet,  \cO_{X^+})$ we obtain
        \[
            0 \leftarrow \sG_{n+1} \leftarrow \sG_{n} \leftarrow \dots \leftarrow \sG_1 \leftarrow \sG_0 \leftarrow 0
        \]
        where $\sG_i = \cO_{X^+}(\rho^*(-(n+1-i)A - L +\Gamma))^{\oplus {n+1 \choose i}}$.    This is still exact since our original complex can be viewed as a locally-free resolution of a locally free sheaf.  We can also do the same thing on $X$, twisting by $(n+1)A + L$, dualizing, and setting $\sK_i = \cO_X(-(n+1-i)A - L)$.  Notice that since $\Gamma \geq 0$, we have canonical maps
        \[
            \sK_i \to \rho_* \sG_i
        \]
        for each $i$.

        Finally, if $d = \dim X$, taking top local cohomology  yields a diagram:
        \[
            {
                \xymatrix@R=12pt{
                    0 & \ar[l] H^{d}_{\fram}(\sG_{n+1}) & \ar[l] \ldots & \ar[l] H^{d}_{\fram}(\sG_2) & \ar[l]  H^{d}_{\fram} (\sG_1) & \ar@{_{(}->}[l]_-{\alpha} H^{d}_{\fram} (\sG_{0}) & \ar[l] 0\\
                    &  H^{d}_{\fram}(\sK_{n+1}) \ar[u] & \ar[l] \ldots & \ar[l] H^{d}_{\fram} (\sK_2) \ar[u] & \ar[l] H^{d}_{\fram}(\sK_1) \ar[u]_{j_1} & \ar[l] H^{d}_{\fram}(\sK_0) \ar[u]_{j_0} &  \\
                }    
            }
        \]
        Here we are abusing and condensing notation and writing $H^d_{\fram}(\sG_i)$ instead of $H^d_{\fram}(R \Gamma(X, \rho_* \sG_i))$.  The lower local cohomologies $H^{i}_{\fram}(\sG_j)$ for $i < d$ are all zero by Bhatt's vanishing theorem \autoref{BhattVanishing} \cite{BhattAbsoluteIntegralClosure,BMPSTWW-MMP}, so the top row is in fact exact and in particular the map labeled $\alpha$ is injective.  The bottom row is not necessarily exact though.
         We now provide a less condensed view of the square involving $\alpha$, $j_0$, and $j_1$.
        \[
            \xymatrix{
                \bigoplus_{n+1} H^{d}_{\fram} (\myR \Gamma(X^+, \cO_{X^+}(\rho^*(-nA - L +\Gamma)))) 
                & \ar@{_{(}->}[l]_-{\alpha} H^{d}_{\fram} (\myR \Gamma(X^+, \cO_{X^+}(\rho^*(-(n+1)A - L +\Gamma)))) \\
                \bigoplus_{n+1} H^{d}_{\fram} (\myR \Gamma(X, \cO_{X}(-nA - L)))  \ar[u]_{j_1} 
                & \ar[l] H^{d}_{\fram}(\myR \Gamma(X, \cO_{X}(-(n+1)A - L))) \ar[u]_{j_0}
            .}
        \]

        Applying Matlis duality $\bullet^{\vee} = \Hom(\bullet, E)$ where $E = E_{R/\fram}$ is the injective hull of the residue field of $R$, we see that
        \[
            \oplus_{n+1} \myB^0(X, \Gamma, \cO_X(K_X + nA + L)) = \Image(j_1)^{\vee} \twoheadrightarrow \Image(j_0)^{\vee} = \myB^0(X, \Gamma, \cO_X(K_X + (n+1)A + L))
        \]
        surjects.  In other words, the multiplication map
        \[
            \myB^0(X, \Gamma, \cO_X(K_X + nA + L)) \otimes H^0(X, \cO_X(A)) \to \myB^0(X, \Gamma, \cO_X(K_X + (n+1)A + L))
        \]  
        surjects.
        In fact, running through the same argument, replacing $n+1$ with $n + t$ and composing, we see that 
        \[
            \myB^0(X, \Gamma, \cO_X(K_X + nA + L)) \otimes H^0(X, \cO_X(tA)) \to \myB^0(X, \Gamma, \cO_X(K_X + (n+t)A + L))
        \]
        also surjects.  Now $\myB^0(X, \Gamma, \cO_X(K_X + (n+t)A + L))$ globally generates $\tau_+(\omega_X, \Gamma) \otimes \cO_X( (n+t)A + L)$ for $t\gg 0$ by the second part of \autoref{prop.Tau+IndependentOfAmple}.  This completes the proof.  
    \end{proof}

    We use this to deduce the following statement which more closely matches the existing literature in characteristic $p > 0$.

    \begin{corollary}
        \label{cor.GlobalGenerationViaTauOX}
        With notation as above, suppose that $\Delta \geq 0$ and that $M$ is a Cartier divisor so that $M - K_X - \Delta$ is big and semi-ample.  Additionally suppose that $A$ is globally generated and ample.  Then if $n_0$ is the dimension of the fiber $X_{\fram}$, we have for any $n \geq n_0$ that
        \[
            \tau_+(\cO_X, \Delta) \otimes \cO_X(nA + M)
        \]
        is globally generated by $\myB^0(X, \Delta, \cO_X( nA+M))$.
    \end{corollary}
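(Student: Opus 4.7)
The plan is to deduce this corollary from Theorem~\ref{t-eff} by an effectivity trick that swaps the module $\omega_X$ for $\cO_X$ at the cost of twisting by a Cartier divisor. Since $X$ is projective over the complete local ring $R$, we may choose an effective Cartier divisor $H \geq 0$ such that $K_X + H \geq 0$ as well. This will allow us to reduce a statement about $\tau_+(\cO_X, \Delta)$ (where $\Delta \geq 0$ but $K_X + \Delta$ need not be effective) to a statement about $\tau_+(\omega_X, \Gamma)$ with $\Gamma \geq 0$, to which Theorem~\ref{t-eff} directly applies.

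Concretely, I would set $\Gamma := K_X + \Delta + H$ and $L := M + H$. Then $\Gamma \geq 0$, and
\[
L - \Gamma = (M + H) - (K_X + \Delta + H) = M - K_X - \Delta,
\]
which is big and semi-ample by hypothesis. Thus Theorem~\ref{t-eff} applies: for every $n \geq n_0$, the sheaf $\tau_+(\omega_X, \Gamma) \otimes \cO_X(nA + L)$ is globally generated by $\myB^0(X, \Gamma, \omega_X \otimes \cO_X(nA + L))$.

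It remains to identify both sides with the expressions in the corollary. For the sheaf, Lemma~\ref{lem.BasicFactsAboutTauAndAdj}\autoref{lem.BasicFactsAboutTauAndAdj.b} gives
\[
\tau_+(\omega_X, \Gamma) = \tau_+(\omega_X, K_X + \Delta + H) = \tau_+(\omega_X, K_X + \Delta) \otimes \cO_X(-H) = \tau_+(\cO_X, \Delta) \otimes \cO_X(-H),
\]
the last equality being Definition~\ref{def.Tau+Ideal}. Tensoring with $\cO_X(nA + L) = \cO_X(nA + M + H)$ cancels the $\cO_X(-H)$ and produces $\tau_+(\cO_X, \Delta) \otimes \cO_X(nA + M)$, as desired. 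For the generating module, Lemma~\ref{lem.SimplePropertiesOfB0}\autoref{lem.SimplePropertiesOfB0.3} yields
\[
\myB^0(X, \Gamma, \omega_X \otimes \cO_X(nA + L)) = \myB^0(X, K_X + \Delta + H, \cO_X(K_X + nA + M + H)) \cong \myB^0(X, \Delta, \cO_X(nA + M)),
\]
where in the last step we absorb the effective Cartier divisor $K_X + H$ using part \autoref{lem.SimplePropertiesOfB0.3} of that lemma.

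There is no serious obstacle here; the corollary is essentially a repackaging of Theorem~\ref{t-eff}, and the only bookkeeping needed is the choice of the auxiliary divisor $H$ and the verification that the twists in the definition of $\tau_+(\cO_X, \Delta)$ and in the divisor-shift property of $\myB^0$ are compatible. The slight subtlety worth flagging is ensuring that the same $H$ works on both sides: it must simultaneously make $K_X + H$ effective (for the definition of $\tau_+(\cO_X, \Delta)$ to apply) and leave $L - \Gamma = M - K_X - \Delta$ unchanged (which it does, because $H$ appears with opposite signs in $L$ and $\Gamma$). Once this is set up, the two applications of the relevant lemmas give the conclusion immediately.
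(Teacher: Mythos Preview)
Your proof is correct and essentially identical to the paper's own argument: the paper also chooses an auxiliary effective Cartier divisor (called $D$ rather than $H$) with $K_X + D \geq 0$, sets $\Gamma = K_X + \Delta + D$ and $L = M + D$, applies Theorem~\ref{t-eff}, and then uses the same two identifications via Definition~\ref{def.Tau+Ideal}/Lemma~\ref{lem.BasicFactsAboutTauAndAdj}\autoref{lem.BasicFactsAboutTauAndAdj.b} for the sheaf and Lemma~\ref{lem.SimplePropertiesOfB0}\autoref{lem.SimplePropertiesOfB0.3} for $\myB^0$.
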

    \begin{proof}
        Choose a Cartier divisor $D \geq 0$ such that $K_X + D \geq 0$.  Then we have that 
        \[
            \tau_+(\omega_X, \Delta + K_X + D)  = \tau_+(\cO_X, \Delta + D) = \tau_+(\cO_X, \Delta) \otimes \cO_X(-D).
        \]
        Set $L = M + D$, and we observe that $L - \Delta - K_X - D = M + D - \Delta -K_X - D = M - K_X - \Delta$ is big and semi-ample.  In particular, we have that 
        \[
            \tau_+(\omega_X, \Delta + K_X + D) \otimes \cO_X(nA + L) = \tau_+(\cO_X, \Delta) \otimes \cO_X(nA + M)
        \] 
        is globally generated by 
        \[
            \myB^0(X, \Delta + K_X + D, \omega_X \otimes \cO_X(nA + L)) = \myB^0(X, \Delta, \cO_X(nA + M))
        \]
        which is what we wanted to show.
    \end{proof}

\subsection{Skoda type result}
In the previous sections we considered $\bigplus$-test ideals of divisor pairs $(X, \Delta)$.  In characteristic zero and $p > 0$, it is also natural to consider pairs $(X, \fra^t)$ where $\fra \subseteq \cO_X$ is an ideal sheaf and $t > 0$ is a real or rational number.  One sometimes even considers triples $(X, \Delta, \fra^t)$.  While we believe the work we have done in previous sections generalizes to this setting of triples, we do not pursue this.  Instead, we prove a Skoda-type result in the case when $\Delta = 0$ for the object we define as $\tau_+(\omega_X, \fra^t)$.

Let $X$ be a normal integral scheme projective over a complete Noetherian local ring $(R, \fram)$.  Suppose that $\frak a\subseteq \cO _X$ is an ideal. 
Let $\mu :X'\to X$ be the normalization of the blow up of $\frak a$ so that $\frak a \cdot \cO_{X'} =\cO _{X'}(-F)$ where $F\geq 0$ is a divisor.   
In particular $\bar {\frak a}=\mu _* \cO _{X'}(-F)$ is the integral closure of $\frak a$ (see \cite[9.6.a]{LazarsfeldPositivity2}).  For any rational number
$t>0$, we define the \emph{$\bigplus$-test submodule associated to $\fra^t$}, denoted $\tau _+(\omega _X, \frak a^t)$, to be the subsheaf of $\omega_X$ such that:
\[
    \tau _+(\omega _X, \frak a^t)\otimes \sL \subseteq \omega_X \otimes \sL
\]
is globally generated by 
\[
    \begin{array}{rcl}
        \myB ^0(X',tF, \mathcal O _{X'}(K_{X'}+ L')) & = & \myB ^0(X',\{ t  F\}, \mathcal O _{X'}(K_{X'}- \lfloor t F \rfloor + L'))\\
        & \subseteq & H^0(X', \cO_{X'}(K_{X'}+ L')) \\
        &\subseteq & H^0(X, \cO_X(K_X +  L))
    \end{array}
\]
where $\sL=\cO_{X}(L)$ is sufficiently ample, $\sL'=\mu ^*\sL$, and $L'=\mu ^*L$.  
Arguing as in previous sections, it is straightforward to see that $\tau _+(\omega _X, \frak a^t)$ is independent of the choice of the sufficiently ample line bundle $\sL$.

Next, suppose that $\sA = \cO_X(A)$ is a sufficiently ample line bundle on $X$ so that $\sA$ and $\fra \otimes \sA$ are globally generated.  Then if $\sA' = \pi^* \sA$ we have that $\sA'(-F) = \cO_{X'}(A' - F)$ is globally generated.  
In some cases, we replace $\sL$ by $\sL \otimes \sA^{\lfloor t \rfloor}$.  This is harmless since we have simply made our line bundle even more ample. 

Instead of setting $X'$ to be the normalized blowup, by \autoref{prop.B0StableBirational}, we can set $X' \to X$ to be any projective birational map from a normal scheme $X'$ dominating the normalized blowup since $\myB^0$ is unaffected.
Note that $\myB ^0(X',tF, \mathcal O _{X'}(K_{X'} + L'))\subseteq H^0(X', \cO_ {X'}(\lceil K_{X'} - tF +L'\rceil))$ and so, we have that 
\[
    \tau _+(\omega _X, \frak a^t) \otimes \sL\subseteq \mu_* \cO_{X'}(\lceil K_{X'} - tF + L'\rceil)
\] 
since the right side is globally generated by a subspace of $H^0(X, \cO _X(K_X+L))$ for $L$ sufficiently ample.  Hence, if for instance $X'$ is a log resolution of $(X, \fra)$, we have an inclusion to the corresponding multiplier submodule $\tau _+(\omega _X, \frak a^t)\subseteq \mathcal J(\omega_X,\frak a^t)$.  


If $X$ is $\bQ$-Gorenstein, one can likewise define the \emph{$\bigplus$-test ideal associated to $\fra^t$}, denoted $\tau_+(\cO_X, \fra^t)$, to be the ideal sheaf $\sJ$ so that $\sJ \otimes \sL$ is globally generated by 
\[
    \myB^0(X', tF, \cO_{X'}(K_{X'} - \mu^* K_X + L')) \subseteq H^0(X, \cO_X(L))
\]
for $\sL = \cO_X(L)$ sufficiently ample.  
If $K_X$ is Cartier, we see directly that $\tau_+(\cO_X, \fra^t) \otimes \omega_X = \tau_+(\omega_X, \fra^t)$.
Arguing as above, we again obtain results of the form:
\[
    \tau _+(\cO_X, \frak a^t)\subseteq \mathcal J(\cO_X,\frak a^t).
\]

\begin{remark}[Even more general definitions]
    In fact, given a $\bQ$-divisor $\Delta \geq 0$ so that $K_X + \Delta$ is $\bQ$-Cartier, and given ideals $\fra_1, \dots, \fra_m$ and rational numbers $t_1, \dots, t_m \geq 0$, we can define the \emph{$\bigplus$-test ideal associated to $(X, \Delta, \fra_1^{t_1} \cdots \fra_m^{t_m})$}, denoted
    \[
        \tau_+(\cO_X, \Delta, \fra_1^{t_1} \cdots \fra_m^{t_m}),
    \]
    to be the ideal sheaf $\sJ$ so that $\sJ \otimes \sL$ is generated by 
    \[
        \myB^0(X', t_1 F_1 + \cdots + t_m F_m, \cO_{X'}(K_{X'} - \mu^* (K_X + \Delta) + L')) \subseteq H^0(X, \sL)
    \]
    for $\sL$ sufficiently ample where $\mu : X' \to X$ is a projective birational map from a normal integral scheme $X$ that factors through the blowups of the ideal sheaves $\fra_i$ and where $\fra_i \cdot \cO_{X'} = \cO_{X'}(-F_i)$.  We won't work in this generality, although we believe that the many results of this paper generalize to this setting.
\end{remark}    

\begin{remark}
    This is not the only natural way to define $\tau_+(\omega_X, \fra^t)$.  For instance, in \cite{MaSchwedePerfectoidTestideal} and \cite{SatoTakagiArithmeticDeformationOFFPure}, when $X = \Spec R$, $\tau_+(\fra^t)$ was essentially defined as the sum of $\tau_+({t \over m}\Div(f))$ where $f \in \fra^{m}$.  This is also roughly equivalent to how the generalized test ideals $\tau(\fra^t)$ of \cite{HaraYoshidaGeneralizationOfTightClosure} are defined in characteristic $p > 0$.  In fact, we use a variant of this approach below in \autoref{sec.+TestModulesOfLinearSeries}.  Our approach in this section instead is inspired by a characterization of test ideals from \cite{SchwedeTuckerTestIdealsNonPrincipal} which itself is inspired by the classical definition of multiplier ideals.  We briefly compare these two constructions in our global setting.  

    Again set $\sA$ so that $\fra \otimes \sA$ is globally generated.  Since $\mu ^*{1 \over m}\Gamma \geq F$ for any $\Gamma \in |\sA^m\otimes \frak a^m| \subseteq |\sA^m|$, by \autoref{lem.SimplePropertiesOfB0} and \autoref{prop.B0StableBirational} we see that $\myB ^0(X',tF, \mathcal O _{X'}(K_{X'}+L'))$ contains
    \[ 
        \myB ^0(X',t\cdot \mu ^*{1 \over m}\Gamma, \mathcal O _{X'}(K_{X'}+L'))\cong \myB ^0(X,{t \over m}\Gamma, \mathcal O _{X}(K_{X} + L)).
    \]
    Putting all this together, one sees that we have a containment 
    \[
        \sum _{\Gamma \in |\sA^m\otimes \frak a^m|}\tau _+(\omega _X,{t \over m}\cdot \Gamma ) \subseteq \tau _+(\omega _X, \frak a^t).
    \]
    We do not know if we always have equality in the inclusion above even in the local case when $X = \Spec R$.  It would also be natural to expect that we have equality without the sum for a single sufficiently general $\Gamma$.
  
    One advantage of our new definition is that it is clear that $\tau_+(\omega_X, \fra^t) = \tau_+(\omega_X, \overline{\fra}^t) = \tau_+(\omega_X, \overline{\fra^t})$ where $\overline{\bullet}$ denotes the integral closure.  This is not clear for the sum-style definition.
\end{remark}

We also have the following.
\begin{lemma} 
    \label{lem.EasyContainmentOfIdealInTestIdeal}
    With notation as above, for any rational $t>0$, we have a natural inclusion
    \[
        \mathfrak a ^{\lceil t \rceil} \cdot \tau_+(\omega_X) \subseteq \tau _+(\omega_X,\mathfrak a ^t).
    \]
    Furthermore, if $X$ is $\bQ$-Gorenstein 
    then 
    \[  
        \mathfrak a ^{\lceil t \rceil} \cdot \tau_+(\cO_X) \subseteq \tau _+(\mathcal O _X,\mathfrak a ^t).
    \]
\end{lemma}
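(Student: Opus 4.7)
The plan is to reduce both inclusions to statements about $\myB^0$ on the normalized blow-up $\mu : X' \to X$ and then combine three standard features of $\myB^0$: its multiplicative behavior (\autoref{lemma.EasyInclusionForB0ByMultiplication}), the shift by an effective divisor (\autoref{lem.SimplePropertiesOfB0}\autoref{lem.SimplePropertiesOfB0.3}), the monotonicity in the boundary (\autoref{lem.SimplePropertiesOfB0}\autoref{lem.SimplePropertiesOfB0.1}), and its birational invariance under proper birational maps of normal schemes (\autoref{prop.B0StableBirational}). Both test objects $\tau_+(\omega_X)$ and $\tau_+(\omega_X,\fra^t)$ are defined by global generation after a sufficiently ample twist, so it suffices to test the inclusion on those generators.

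For the first inclusion, I would fix an $\sL=\cO_X(L)$ sufficiently ample on $X$ (and later, sufficiently ample $\sM=\cO_X(M)$ so that $\fra^{\lceil t\rceil}\otimes\sM$ is globally generated). Let $b_1,\dots,b_r\in\myB^0(X,0,\omega_X\otimes\sL)$ globally generate $\tau_+(\omega_X)\otimes\sL$, and let $a_1,\dots,a_k\in H^0(X,\fra^{\lceil t\rceil}\otimes\sM)$ globally generate $\fra^{\lceil t\rceil}\otimes\sM$. Then the products $a_ib_j$ generate $\fra^{\lceil t\rceil}\cdot\tau_+(\omega_X)\otimes\sL\otimes\sM$, so it is enough to show each $a_ib_j$ lies in $\myB^0(X',tF,\cO_{X'}(K_{X'}+\mu^*(L+M)))$. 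By \autoref{prop.B0StableBirational}, each $b_j$ lifts to $\tilde b_j\in\myB^0(X',0,\cO_{X'}(K_{X'}+\mu^*L))$, while $\mu^*a_i\in H^0(X',\cO_{X'}(\mu^*M-\lceil t\rceil F))$ because $\fra\cdot\cO_{X'}=\cO_{X'}(-F)$. Applying \autoref{lemma.EasyInclusionForB0ByMultiplication} on $X'$ with $N=\mu^*M-\lceil t\rceil F$, the product lies in
\[
\myB^0\bigl(X',0,\cO_{X'}(K_{X'}+\mu^*(L+M)-\lceil t\rceil F)\bigr).
\]
Then \autoref{lem.SimplePropertiesOfB0}\autoref{lem.SimplePropertiesOfB0.3} (with $F$ replaced by $\lceil t\rceil F$) identifies this with $\myB^0(X',\lceil t\rceil F,\cO_{X'}(K_{X'}+\mu^*(L+M)))$, and \autoref{lem.SimplePropertiesOfB0}\autoref{lem.SimplePropertiesOfB0.1} together with $\lceil t\rceil \geq t$ gives the inclusion into $\myB^0(X',tF,\cO_{X'}(K_{X'}+\mu^*(L+M)))$, which is exactly what globally generates $\tau_+(\omega_X,\fra^t)\otimes\sL\otimes\sM$. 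Taking $\sL\otimes\sM$ sufficiently ample concludes the first statement.

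For the $\bQ$-Gorenstein version, I would run the same argument using the definition of $\tau_+(\cO_X,\fra^t)$ as globally generated (after an ample twist) by $\myB^0(X',tF,\cO_{X'}(K_{X'}-\mu^*K_X+L'))$ and the fact that $\tau_+(\cO_X)\otimes\sL$ is globally generated by $\myB^0(X,0,\cO_X(L))=\myB^0(X,-K_X,\cO_X(K_X+L))$ (after passing through an effective auxiliary Cartier divisor to make $K_X$ effective, as in \autoref{def.Tau+V2}). Lifting sections via \autoref{prop.B0StableBirational}, multiplying by $\mu^*a_i$ on $X'$, and then applying the same combination of multiplicativity, boundary shift, and monotonicity yields the inclusion $\fra^{\lceil t\rceil}\cdot\tau_+(\cO_X)\subseteq\tau_+(\cO_X,\fra^t)$. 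Alternatively, in the Gorenstein case one can simply tensor the first inclusion by the line bundle $\omega_X^{-1}$; the $\bQ$-Gorenstein case is a small bookkeeping step beyond that.

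The only mildly subtle point, which I expect to be the main obstacle, is the careful coordination of ampleness: one needs $\sL$ sufficiently ample for $\tau_+(\omega_X,\fra^t)\otimes\sL$ to be globally generated by $\myB^0$ on $X'$, and simultaneously $\sM$ so that $\fra^{\lceil t\rceil}\otimes\sM$ is globally generated on $X$, while also ensuring the pull-backs to $X'$ remain in the regime where \autoref{prop.B0StableBirational} and the multiplicativity of \autoref{lemma.EasyInclusionForB0ByMultiplication} apply. Since both $\tau_+$ objects are independent of the chosen sufficiently ample twist (\autoref{prop.Tau+IndependentOfAmple}), this is a matter of enlarging $\sL\otimes\sM$ enough, and poses no real difficulty.
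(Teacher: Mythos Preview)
Your argument is correct. Both your approach and the paper's ultimately rest on the same ingredients (birational invariance of $\myB^0$, monotonicity in the boundary, and the effect of twisting by an effective Cartier divisor), but they are packaged differently.

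The paper first reduces to integer $t$, then writes $\fra^t = \sum_{\Gamma \in |\sA^t \otimes \fra^t|} \cO_X(-\Gamma)$ and uses the sheaf-level identity $\tau_+(\omega_X)\otimes\cO_X(-\Gamma)=\tau_+(\omega_X,\Gamma)$ from \autoref{lem.BasicFactsAboutTauAndAdj}\autoref{lem.BasicFactsAboutTauAndAdj.b}, together with the containment $\tau_+(\omega_X,\Gamma)\subseteq\tau_+(\omega_X,\fra^t)$ established in the remark preceding the lemma. In other words, the paper stays at the level of $\tau_+$-sheaves and invokes already-established formulas as black boxes, which makes the proof a three-line computation. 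Your version instead unpacks everything at the level of individual sections on $X'$: you lift the $b_j$ via \autoref{prop.B0StableBirational}, multiply by $\mu^*a_i$, and then push the resulting section into $\myB^0(X',tF,\dots)$ via the shift and monotonicity lemmas. This is more explicit and avoids appealing to the preceding remark, at the cost of a longer write-up. Either route works; the paper's is more compact because the relevant twist and comparison facts have already been isolated earlier.
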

\begin{proof} 
    Since $\tau_+(\omega_X, \fra^{\lceil t \rceil}) \subseteq \tau_+(\omega_X, \fra^t)$ we may replace $t$ by $\lceil t \rceil$ and so assume that $t$ is an integer.
    Fix $\sA$ ample so that $\mathfrak a 
\otimes \sA$ is globally generated, which implies that  $\mathfrak a^{t} 
\otimes \sA^{t }$ is also globally generated, and hence  there is a surjection 
\[
    H^0(X,\mathfrak a ^t
\otimes \sA^t)\otimes \sA^{-t}\to \mathfrak a^t\subseteq \mathcal O _X.
\] 
Thus $\mathfrak a^t=\sum _{\Gamma \in |\sA^t\otimes \frak a^t|} \mathcal O _X( -\Gamma )$.
Arguing as in the remark above, we have that 
\[
    \begin{array}{rcl}
        \mathfrak a^t \cdot \tau_+(\omega_X) & = &  \tau_+(\omega_X) \cdot \left( \sum _{\Gamma \in |\sA^t\otimes \frak a^t|} \mathcal O _X(- \Gamma )\right)  \\
        & = & \sum _{\Gamma \in |\sA^t\otimes \frak a^t|}\tau _+(\omega _X) \cdot \mathcal O _X(- \Gamma ) \\
        & = & \sum _{\Gamma \in |\sA^t\otimes \frak a^t|}\tau _+(\omega _X, \Gamma ) \\
        & \subseteq & \tau _+(\omega _X, \frak a^t).
    \end{array}
\]
 %
%
The statement about $\cO_X$ follows similarly.
\end{proof}
\begin{theorem} 
    \label{thm.SkodaWithoutTooMuchGenerality}
    Suppose that $X$ is normal, integral and projective over a complete local Noetherian domain $(R, \fram)$ and suppose that $0 \neq \fra$ is an ideal sheaf on $X$.  Let $\mu : X' \to X$ denote the normalized blowup of $\fra$ and let $n = \dim X'_{\fram}$.  
    Then for any rational number $t \geq n + 1$ we have $\tau _+(\omega _X, \frak a^t)=\frak a\cdot \tau _+(\omega _X, \frak a^{t-1})$.  

    Furthermore, if $X$ is $\bQ$-Gorenstein, we have that $\tau_+(\cO_X, \fra^t) = \fra \cdot \tau_+(\cO_X, \fra^{t-1})$.
\end{theorem}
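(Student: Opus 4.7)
The plan is to handle the two inclusions separately. The easy containment $\fra \cdot \tau_+(\omega_X, \fra^{t-1}) \subseteq \tau_+(\omega_X, \fra^t)$ follows from \autoref{lemma.EasyInclusionForB0ByMultiplication}: multiplying sections of $\myB^0(X', (t-1)F, \cO_{X'}(K_{X'} + L'))$ by global sections of $\fra \otimes \sA$ (pulled back to $X'$ as sections of $\sA'(-F) = \cO_{X'}(A'-F)$) lands inside $\myB^0(X', (t-1)F, \cO_{X'}(K_{X'} + L' + A' - F)) = \myB^0(X', tF, \cO_{X'}(K_{X'} + L' + A'))$, which generates $\tau_+(\omega_X, \fra^t) \otimes \sL \otimes \sA$ for $\sL$ sufficiently ample.

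For the reverse inclusion I adapt the Skoda/Koszul argument used in the proof of \autoref{t-eff}. Via \autoref{prop.Tau+BaseChangesAlongEtaleMapsOfTheBase}, I may (after a finite \'etale base change of $R$) assume $R/\fram$ is infinite. Fix $\sA = \cO_X(A)$ ample and globally generated so that $\fra \otimes \sA$ is also globally generated, and pick $n+1$ sections $y_1, \ldots, y_{n+1} \in H^0(X, \fra \otimes \sA)$ that globally generate $\fra \otimes \sA$ (possible since $n+1 > \dim X_\fram$ and the residue field is infinite). Their pullbacks to $X'$ generate the line bundle $\sE := \sA'(-F) = \mu^*(\fra \otimes \sA)$, so the Koszul complex
\[
0 \to \sE^{-(n+1)} \to \sE^{-n,\oplus(n+1)} \to \cdots \to \sE^{-1,\oplus(n+1)} \to \cO_{X'} \to 0
\]
is exact. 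Pulling back to ${X'}^+$, tensoring with $\cO_{{X'}^+}(L' + A' - tF)$ (where $L' = \mu^*L$, $A' = \mu^*A$, for $\sL = \cO_X(L)$ sufficiently ample), and applying $\sHom(\bullet, \cO_{{X'}^+})$ yields an exact complex $0 \to \sG_0 \to \cdots \to \sG_{n+1} \to 0$ with
\[
\sG_i = \cO_{{X'}^+}\bigl(-L' + (i-1)A' + (t-i)F\bigr)^{\oplus \binom{n+1}{i}}.
\]
The hypothesis $t \geq n+1$ ensures $(t-i)F \geq 0$ for $0 \leq i \leq n+1$.

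Writing $L' - (i-1)A' - (t-i)F = \mu^*(L - (t-1)A) + (t-i)(A' - F)$ and choosing $L$ so that $L - (t-1)A$ is very ample on $X$, each $-\sG_i$ is big and semi-ample on $X'$ (the first summand is big and semi-ample, the second is semi-ample by the global generation of $\sE$). \autoref{BhattVanishing} then gives $H^j_\fram(\myR\Gamma({X'}^+, \sG_i)) = 0$ for $j < d := \dim X'$, so applying $H^d_\fram$ to $\sG_\bullet$ yields the exact sequence $0 \to H^d_\fram(\sG_0) \xrightarrow{\alpha} H^d_\fram(\sG_1) \to \cdots \to H^d_\fram(\sG_{n+1}) \to 0$. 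Exactly as in the proof of \autoref{t-eff}, $\alpha$ fits into a commutative square with the natural $X'$-level horizontal map $\bigoplus_{i}\cdot y_i : H^d_\fram(\myR\Gamma(X', \cO_{X'}(-L'-A'))) \to H^d_\fram(\myR\Gamma(X', \cO_{X'}(-L')))^{\oplus(n+1)}$ via vertical maps $j_0, j_1$ whose images are Matlis-dual to $\myB^0(X', tF, \cO_{X'}(K_{X'} + L' + A'))$ and $\bigoplus_{n+1}\myB^0(X', (t-1)F, \cO_{X'}(K_{X'} + L'))$ respectively. Injectivity of $\alpha$ together with commutativity forces $\alpha$ to restrict to an injection on images, and Matlis-dualizing produces the desired surjective multiplication map $\bigoplus_{n+1}\myB^0(X', (t-1)F, \cO_{X'}(K_{X'} + L')) \twoheadrightarrow \myB^0(X', tF, \cO_{X'}(K_{X'} + L' + A'))$, $(s_i) \mapsto \sum y_i s_i$. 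Tensoring with $\sL^{-1} \otimes \sA^{-1}$ and passing to globally generated subsheaves yields $\tau_+(\omega_X, \fra^t) \subseteq \fra \cdot \tau_+(\omega_X, \fra^{t-1})$.

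The $\cO_X$ statement for $\bQ$-Gorenstein $X$ follows from the $\omega_X$ version via the identification $\tau_+(\cO_X, \fra^t) \otimes \omega_X = \tau_+(\omega_X, \fra^t)$ (Gorenstein case), or by passing to an index-one cover using \autoref{thm.TransofmrationRulesUnderFiniteMaps} in the general $\bQ$-Cartier case. The main technical obstacle is arranging a single ``uniformly sufficiently ample'' $L$ so that Bhatt vanishing applies to all $n+2$ terms of $\sG_\bullet$ simultaneously; this is exactly what the decomposition $L' - (i-1)A' - (t-i)F = \mu^*(L - (t-1)A) + (t-i)(A'-F)$ accomplishes.
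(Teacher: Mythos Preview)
Your argument is essentially the paper's own proof: a Koszul complex on $X'$ for the line bundle $\sA'(-F)$, pulled back to ${X'}^+$, combined with Bhatt vanishing to get injectivity of $\alpha$, then Matlis-dualized. Two points deserve correction.

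First, the claim that $n+1$ sections of $H^0(X,\fra\otimes\sA)$ globally generate $\fra\otimes\sA$ is not justified: $\fra\otimes\sA$ is not a line bundle, and the bound $n+1>\dim X_\fram$ does not control the number of generators of an arbitrary coherent sheaf. What the argument actually needs, and what the paper states, is that the \emph{pullbacks} $\mu^*y_i$ generate the line bundle $\sA'(-F)$ on $X'$; this holds for $n+1$ general sections precisely because $n=\dim X'_\fram$. Your next sentence already uses this correctly, so the slip is harmless to the logic.

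Second, your reduction of the $\bQ$-Gorenstein case to an index-one cover via \autoref{thm.TransofmrationRulesUnderFiniteMaps} invokes a transformation rule for $\tau_+(\cO_X,\fra^t)$ under finite maps that the paper never states or proves (that theorem treats only divisorial boundaries). The paper instead reruns the same Koszul argument with $L$ replaced by $L-K_X$, so that the defining sections become $\myB^0(X',tF,\cO_{X'}(K_{X'}-\mu^*K_X+L'))$; one must also pass to a finite cover on which $\mu^*K_X$ becomes Cartier before applying Bhatt vanishing, and the bottom row of the key square involves round-downs. This is more direct and avoids any unproven lemma.
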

In the case that $X = \Spec R$, then $\dim X'_{\fram} \leq \dim X - 1$.  Hence we recover the usual Skoda statement in the local case.  As typical, by iterating this result, if $t$ is an integer, then we have that 
\[
    \fra^{t - n} \cdot \tau_+(\omega_X, \fra^n) = \tau_+(\omega_X, \fra^t)
\]
and in the $\bQ$-Gorenstein case that 
\[
    \fra^{t - n} \cdot \tau_+(\cO_X, \fra^n) = \tau_+(\cO_X, \fra^t).
\]

\begin{proof} 
    We first handle the case of $\tau_+(\omega_X, \dots)$ and then explain how to modify the proof for the $\bigplus$-test ideal case.
    Write $\frak a\cdot \cO _{X'}=\cO _{X'}(-F)$ for some effective divisor $F$. 
    We will assume that $\sA$ and $\sA \otimes \frak a$ are globally generated and $\sL$ is a sufficiently ample line bundle. The proof now mimics the proof of \autoref{t-eff} but we use $X'^+$ instead of $X^+$. 
    
    Note that $\mu ^* H^0(X, \sA \otimes \frak a)\cong H^0(X' ,  \sA' (-F))$ generates $ \sA'(-F)$ and we may pick sections $s_0,\ldots , s_n$ generating  $\sA' (-F)$ (this is because $\dim X'_{\frak m}=n$). Notice if the residue field is not infinite we may need to take an \'etale base change of $R$ as in the proof of \autoref{t-eff}, but this is not an obstruction due to \autoref{prop.FiniteEtalExtensionOfRAndB^0}.
    We then have an exact Skoda/Koszul complex on $X'$ \[
            0 \to \sF_{n+1} \to \sF_{n} \to \dots \to \sF_1 \to \sF_0 \to 0
        \]
        where $\sF_i = \wedge^iV\otimes \sO _{X'}(-i(A'-F))$, $V$ is the free $R = H^0(X, \cO_X)$-module on the $s_i$ so that $\wedge^i V = R^{\oplus {n+1 \choose i}}$, and $A'=\mu ^* A$.  Set $L$ to be a sufficiently ample line bundle on $X$ and $L' = \mu^* L$.  Let $\rho':X'^+\to X'$ be the canonical map and pull back this complex to $X'^+$.  It stays exact since we are pulling back a resolution of a locally free sheaf $\sF_0$. 
        We now twist by $\rho'^*\big((n+1)A'+L' - tF\big) = \rho'^* \big((n+1)(A' - F) + L' - (t - n - 1)F \big)$ which is a line bundle on $X'$, and not just a $\bQ$-line bundle.  Furthermore, there exists a finite morphism $g' : Y' \to X'$ where  $g'^*(L' - (t-i)F)$ is in fact a big and semi-ample Cartier divisor for $i = 0, \dots, n+1$.
        Next apply the functor $\sHom_{\cO_{X'^+}}(\bullet,  \cO_{X'^+})$ to obtain an exact complex  
        \[
            0 \leftarrow \sG_{n+1} \leftarrow \sG_{n} \leftarrow \dots \leftarrow \sG_1 \leftarrow \sG_0 \leftarrow 0
        \]
        where 
        \[
            \begin{array}{rl}
                \sG_i = & \cO_{X'^+}(\rho'^*(-(n+1-i)(A'-F) - L' + (t-n-1)F ))\otimes \wedge ^iV \\
                = & \cO_{X'^+}(\rho'^*(-(n+1-i)A' - L' + (t - i)F ))\otimes \wedge ^iV.
            \end{array}
        \]           
        
        Taking local cohomology we obtain the following commutative diagram
        \[
            {\small
                \xymatrix@C=16pt{
                    V\otimes  H^{d}_{\fram} \big(\myR \Gamma(X'^+, \cO_{X'^+}(\rho'^*(-nA'- L' + (t-1)F )))\big) 
                    & \ar@{_{(}->}[l]_-{\alpha} H^{d}_{\fram} \big(\myR \Gamma(X'^+, \cO_{X'^+}(\rho'^*(-(n+1)A' - L' + tF)))\big) \\
                V\otimes  H^{d}_{\fram} \big(\myR \Gamma(X', \cO_{X'}(-nA' - L'))\big)  \ar[u]_{j_1} 
                    & \ar[l] H^{d}_{\fram}\big(\myR \Gamma(X', \cO_{X'}(-(n+1)A'-L' ))\big) \ar[u]_{j_0}
                .}
            }
        \]
        Here $\alpha $ is injective by Bhatt vanishing (\autoref{BhattVanishing}) since $A'$ and $g'^*(L' - (t-i)F)$ are big and semi-ample (note $L$ was arbitrarily ample).

        Applying Matlis duality $\bullet^{\vee} = \Hom(\bullet, E)$ where $E = E_{R/\fram}$ is the injective hull of the residue field of $R$, we see that
        \[
           V\otimes  \myB^0(X', (t-1)F, \cO_{X'}(K_{X'} + nA' + L')) = \Image(j_1)^{\vee} \twoheadrightarrow \] 
           \[ 
               \Image(j_0)^{\vee} = \myB^0(X', tF, \cO_{X'}(K_{X'} + (n+1)A' + L'))
        \]
        surjects.  Pushing forward via $\mu : X' \to X$ (and remembering that the sections $s_i$ are elements of $H^0(X, \sA\otimes \mathfrak a) \subseteq H^0(X, \sA)$), we have a surjection 
        \[
            \tau _+(\omega _X, \frak a^{t- 1})\otimes \sA^{n}\otimes \sL \otimes H^0(X,  \sA\otimes \mathfrak  a) \to \tau _+(\omega _X, \frak a^{t})\otimes \sA^{n+1}\otimes \sL.
        \]  
        It then follows that $\frak a \cdot \tau _+(\omega _X, \frak a^{t-1})=\tau _+(\omega _X, \frak a^{t})$.
         This completes the proof of the first case.  
         
         The second case follows similarly but replace the sufficiently ample $L$ by $L - K_X$ (and hence $L'$ by $L' - \mu^* K_X$).  In particular, our $Y'$ should also must such that the pullback of $\mu^*K_X$ is Cartier.  The bottom row of the analogous commutative diagram also must round down the divisors since $\mu^* K_X$ is only a $\bQ$-divisor.  
        The rest of the proof then follows analogously.
\end{proof}


\subsection{An application to the Brian{\c{c}}on-Skoda theorem}

    As suggested by S.~Takagi, we also show how the above can be used to show the following version of the Brian{\c{c}}on-Skoda theorem for $\bQ$-Gorenstein splinters of mixed characteristic.  Since splinters are pseudo-rational by \cite{BhattAbsoluteIntegralClosure}, the Brian{\c{c}}on-Skoda theorem for splinters follows from the work of Lipman-Tessier \cite[Theorem 2.1, Corollary 2.2]{LipmanTeissierPseudorationallocalringsandatheoremofBrianconSkoda}.  However, instead of simply looking at the analytic spread of the ideal, the Lipman-Tessier bound also considers the codimension of the associated primes of \emph{powers} of $I$, which do not appear in our corollary.  Related versions of this result also follows from \cite[Theorem 2.7]{HeitmannMaExtendedPlusClosure}, \cf \cite{HeitmannPlusClosureMixedChar} and \cite[Section 4]{MaSchwedeTuckerWaldronWitaszekAdjoint}.

    \begin{corollary}
        Suppose that $(R, \fram)$ is an complete normal $\bQ$-Gorenstein local domain of residual characteristic $p > 0$ and $X = \Spec R$.  Suppose that $I \subseteq R$ is an ideal of analytic spread\footnote{The analytic spread $m$ of $I$ is the dimension $R[It]/\fram R[It]$ where $R[It]$ is the Rees algebra of $I$.  Hence $m - 1 = n = \dim X'_{\fram}$.} $n+1$, then for any integer $j > 0$
        \[
            \overline{I^{j+n}} \cdot \tau_+(\cO_X) \subseteq I^{j}
        \]
        where $\overline{\bullet}$ denotes integral closure.  Furthermore, if $(R, \fram)$ not necessarily complete, but is instead an excellent local splinter (meaning it is a summand of every module finite ring extension, or equivalently if $X$ is $\bigplus$-regular / $T$-regular), then we have that 
        \[
            \overline{I^{j+n}}  \subseteq I^{j}.
        \]
    \end{corollary}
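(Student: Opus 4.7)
The plan is to chain the elementary containment of \autoref{lem.EasyContainmentOfIdealInTestIdeal} with the iterated Skoda identity from \autoref{thm.SkodaWithoutTooMuchGenerality}, exploiting the invariance $\tau_+(\cO_X, I^t) = \tau_+(\cO_X, \overline{I^t})$ recorded in the remark just before \autoref{lem.EasyContainmentOfIdealInTestIdeal}. Note that $X = \Spec R$ is finite, hence projective, over itself, so the projective-setting results apply directly to this affine $X$, and $X'_{\fram}$ in \autoref{thm.SkodaWithoutTooMuchGenerality} is the special fiber of the normalized blowup of $I$, whose dimension is one less than the analytic spread, i.e.\ $n$.

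Assuming first that $R$ is complete, apply the $\bQ$-Gorenstein half of \autoref{lem.EasyContainmentOfIdealInTestIdeal} to the single ideal $\fra := \overline{I^{j+n}}$ with exponent $t = 1$ to obtain
\[
    \overline{I^{j+n}} \cdot \tau_+(\cO_X) \;\subseteq\; \tau_+\!\big(\cO_X, \overline{I^{j+n}}\big) \;=\; \tau_+(\cO_X, I^{j+n}),
\]
the equality coming from invariance under integral closure. Since $j + n \geq n + 1$, the $\bQ$-Gorenstein Skoda identity $\tau_+(\cO_X, I^{s}) = I \cdot \tau_+(\cO_X, I^{s-1})$ applies; iterating it $j$ times yields
\[
    \tau_+(\cO_X, I^{j+n}) \;=\; I^{j} \cdot \tau_+(\cO_X, I^n) \;\subseteq\; I^{j},
\]
the last inclusion simply because $\tau_+(\cO_X, I^n)$ is an ideal of $\cO_X$. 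Combining gives the first statement.

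For the splinter case, $R$ is only excellent, so I would reduce to the complete case by passing to $\widehat{R}$. Faithful flatness of $R \to \widehat{R}$ together with the fact that integral closure commutes with completion for excellent rings means $\overline{I^{j+n}}\, \widehat{R} = \overline{(I\widehat{R})^{j+n}}$, and that the desired containment $\overline{I^{j+n}} \subseteq I^j$ follows from the analogous statement for $I\widehat{R}$ over $\widehat{R}$. One then observes that $\widehat{R}$ inherits the $\bQ$-Gorenstein splinter property, and hence is $\bigplus$-regular in the sense that $\tau_+(\cO_{\widehat{X}}) = \cO_{\widehat{X}}$, so the first statement applied over $\widehat{R}$ delivers the conclusion.

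The main obstacle is administrative rather than conceptual: confirming that the splinter / $\bigplus$-regular property (and therefore the equality $\tau_+(\cO) = \cO$) is preserved under completion of an excellent $\bQ$-Gorenstein local ring. This relies on the identification of splinters with $\bigplus$-regular rings in mixed characteristic already invoked in the statement of the corollary, together with standard excellence arguments for the behavior of integral closure under completion.
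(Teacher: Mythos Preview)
Your proof is correct and follows essentially the same route as the paper: chain \autoref{lem.EasyContainmentOfIdealInTestIdeal} (applied to $\overline{I^{j+n}}$ with exponent $1$), the integral-closure invariance of $\tau_+$, and the iterated Skoda identity from \autoref{thm.SkodaWithoutTooMuchGenerality}, then specialize to the splinter case after completing. The paper handles the reduction to the complete case in a single sentence by citing \cite{DattaTuckerPermanence} for preservation of the splinter property under completion of excellent local rings, which resolves exactly the ``administrative'' point you flagged; your additional remark that integral closure of ideals commutes with completion for excellent rings is standard and correct.
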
    
        The proof we use is essentially the same as the one contained in \cite[Section 9.6]{LazarsfeldPositivity2}.
    \begin{proof}
        For the second statement, we may also assume that $R$ is complete (the splinter property is preserved by \cite{DattaTuckerPermanence}).  We will now use the theory we have sofar developed, applied in the case that $X = \Spec R$.  
        We have that $\overline{I^m} \cdot \tau_+(\cO_X) \subseteq \tau_+(\cO_X, \overline{I^m}) = \tau_+(\cO_X, I^m)$ for all $m$ by \autoref{lem.EasyContainmentOfIdealInTestIdeal}.  Therefore 
        \[
            \overline{I^{j+n}} \cdot \tau_+(\cO_X)  \subseteq \tau_+(\cO_X, {I^{j+n}}) = I^{j} \cdot \tau_+(\cO_X, I^{n}) \subseteq I^{j}.
        \]                
        When $R$ is a splinter, we have that $\tau_+(\cO_X) = \cO_X$ (since every finite ring extension of $R$ splits) and so the proof is complete.
    \end{proof}

    If $X$ is not $\bQ$-Gorenstein but $\Delta \geq 0$ is such that $K_X + \Delta$ is $\bQ$-Cartier, we believe a slight generalization of the same proof also shows that $\overline{I^{j+n}} \cdot \tau_+(\cO_X, \Delta) \subseteq I^j$.

\section{$\bigplus$-test modules of linear series}
\label{sec.+TestModulesOfLinearSeries}

   In this section, we will generalize results of \cite{LazarsfeldPositivity2,MustataNonNefLocusPositiveChar,SatoStabilityOfTestIdeals} on the global generation of test ideals (or multiplier ideals) of linear series, from the equal characteristic case to the mixed characteristic case.  We then apply this to study the order of vanishing of linear series and to the diminished base locus as in \cite{NakayamaZariskiDecompAndAbundance,EinLazMusNakPopAsymptoticInvariantsOfBaseLoci,MustataNonNefLocusPositiveChar}, note some related results were also obtained in the singular case in \cite{CacciolaDiBiagioAsymptoticBaseLociOnSingular,SatoStabilityOfTestIdeals,MurayamaTheGammaConstruction}.  Many of our arguments were inspired by those in \cite{MustataMultiplierIdealOfASum}.  We begin by defining the corresponding test ideals for linear series.  

    \begin{setting}
        \label{set.TestModulesOfLinearSeries}
        Throughout this section, suppose $X$ is a Noetherian normal integral scheme projective over a complete local Noetherian ring $(R, \fram)$ of mixed characteristic $(0, p)$.  
        We \emph{also} assume that $X$ is a regular scheme (in other words, it is nonsingular) and $\sL=\mathcal O _X(L)$ is a line bundle.
    \end{setting}

    \begin{definition} 
        Let $W\subseteq H^0(X,\sL)$ be a subspace and $\lambda >0$ a positive rational number, then the $\bigplus$-test module corresponding to the linear series $|W|$ is
        \[\tau _+(\omega _X , \lambda \cdot |W|) := \sum _{D\in |W|}\tau _+(\omega _X ,  \lambda \cdot D).\]
        Since $X$ is Noetherian, we may pick finitely many elements $D_1,\ldots ,D_r\in |W|$ such that
        $\tau _+(\omega _X ,  \lambda \cdot |W|) = \sum _{i=1}^r\tau _+(\omega _X ,  \lambda \cdot D_i)$.

        We also define $\tau_+(\cO_X, \lambda \cdot |W|) = \tau_+(\omega_X, \lambda \cdot |W|) \otimes \omega_X^{-1}$.  
    \end{definition}

    \begin{remark}
        Note, we could have also defined $\tau_+(\omega_X, \lambda \cdot |W|)$ as in the previous section as $\tau_+(\omega_X, \frb_{|W|}^\lambda)$.  This is what is done in the classical situation of the multiplier ideal, and also what was done in positive characteristic in \cite{MustataNonNefLocusPositiveChar,SatoStabilityOfTestIdeals} (although in those cases, the sum formulation of the multiplier ideal as above is known to agree with $\tau_+(\omega_X, \frb_{|W|}^\lambda)$).  While we expect similar statements for that latter definition, we do not pursue that approach here.
    \end{remark}

If $D$ is a $\mathbb Q$-divisor with $\kappa (D)\geq 0$, then $mD$ is a divisor such that $|mD|\ne \emptyset$ for all $m>0$ sufficiently divisible.
We let 
\[
    \tau_+  (\omega _X,\lambda \cdot || D||)=\bigcup _{m>0}\tau_+   (\omega _X, \frac \lambda m \cdot | mD|)
\] 
where we ignore the terms such that $|mD|=\emptyset$ (or equivalently we let $\tau_+   (\omega _X, \frac \lambda m \cdot | mD|)=0$ if $|mD|=\emptyset$).  Recall that if $mD$ is not necessarily an integral divisor, then $|mD|=|\lfloor mD\rfloor|+\{mD\}$. We also let 
\[
    \tau_+  (\cO_X,\lambda \cdot || D||)=\bigcup _{m>0}\tau_+   (\cO_X, \frac \lambda m \cdot | mD|) = \tau_+(\omega_X, \lambda \cdot || D||) \otimes \omega_X^{-1}.
\] 

\begin{lemma}\label{l-asy} We have $\tau_+  (\omega _X,\lambda \cdot || D||)=\tau_+   (\omega _X, \frac \lambda m \cdot | mD|)$ for any $m>0$ sufficiently divisible.\end{lemma}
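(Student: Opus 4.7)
The plan is first to establish a monotonicity property for the family $\{\tau_+(\omega_X, \tfrac{\lambda}{m}\cdot |mD|)\}$ under divisibility of $m$, and then invoke Noetherianity to conclude that the ascending union stabilizes.

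Fix $m_0 > 0$ so that $m_0 D$ is an integral Cartier divisor with $|m_0 D|\neq \emptyset$, which is possible since $\kappa(D)\geq 0$. I claim that whenever $k \mid m$, say $m = ka$ with $k,a \in \mathbb{Z}_{>0}$ and $kD$ integral, we have
\[
    \tau_+(\omega_X, \tfrac{\lambda}{k}\cdot |kD|) \subseteq \tau_+(\omega_X, \tfrac{\lambda}{m}\cdot |mD|).
\]
Indeed, for any $E \in |kD|$ we have $aE \sim akD = mD$, so $aE \in |mD|$, and as $\mathbb{Q}$-divisors $\tfrac{\lambda}{k}\cdot E = \tfrac{\lambda a}{ka}\cdot E = \tfrac{\lambda}{m}\cdot aE$. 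Since $\tau_+(\omega_X, \bullet)$ depends only on the $\mathbb{Q}$-divisor, this gives
\[
    \tau_+(\omega_X, \tfrac{\lambda}{k}\cdot E) = \tau_+(\omega_X, \tfrac{\lambda}{m}\cdot aE) \subseteq \tau_+(\omega_X, \tfrac{\lambda}{m}\cdot |mD|),
\]
and summing over the finitely many generators of $\tau_+(\omega_X, \tfrac{\lambda}{k}\cdot |kD|)$ yields the claim.

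Restricting attention to multiples $m = k m_0$, this produces an ascending chain of coherent subsheaves of the coherent sheaf $\omega_X$ on the Noetherian scheme $X$. Hence the chain stabilizes: there is some $k_0 \geq 1$ such that
\[
    \tau_+(\omega_X, \tfrac{\lambda}{k m_0}\cdot |k m_0 D|) = \tau_+(\omega_X, \tfrac{\lambda}{k_0 m_0}\cdot |k_0 m_0 D|)
\]
for every $k \geq k_0$. Setting $N := k_0 m_0$, any $m$ divisible by $N$ can be written as $m = k m_0$ with $k \geq k_0$, and then $m$ is also cofinal with respect to divisibility among the indexing set in the definition of $\tau_+(\omega_X, \lambda\cdot ||D||)$. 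Thus for such sufficiently divisible $m$,
\[
    \tau_+(\omega_X, \lambda \cdot \|D\|) = \bigcup_{m' > 0} \tau_+(\omega_X, \tfrac{\lambda}{m'}\cdot |m'D|) = \tau_+(\omega_X, \tfrac{\lambda}{m}\cdot |mD|),
\]
as desired. The only delicate point is the monotonicity in the first step, which reduces to the tautology that the $\mathbb{Q}$-divisor $\tfrac{\lambda}{k}\cdot E$ is unchanged upon multiplying both $E$ and the denominator by $a$; everything else is formal from the Noetherian hypothesis.
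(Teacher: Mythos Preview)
Your approach is essentially the same as the paper's: establish monotonicity under divisibility via the observation $\tfrac{\lambda}{k}E = \tfrac{\lambda}{ak}(aE)$, then use Noetherianity of $\omega_X$ to stabilize. The paper phrases the second step as ``the set has a maximal element'' rather than as a chain argument, but the content is identical.

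One small slip: the family $\{\tau_+(\omega_X,\tfrac{\lambda}{km_0}\cdot|km_0D|)\}_k$ is \emph{not} an ascending chain in the usual order on $k$; your monotonicity only gives containment when $k_1\mid k_2$. So the conclusion ``for every $k\geq k_0$'' does not follow as written. The fix is immediate---either pass to the cofinal chain $k=j!$, or argue as the paper does that the set of these subsheaves has a maximal element (at some $k_0$), and then monotonicity forces equality for every $k$ divisible by $k_0$. With that adjustment your final paragraph goes through, since $N\mid m$ gives $k_0\mid k$ rather than merely $k\geq k_0$.
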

\begin{proof} Note that if $G\in |mD|\ne \emptyset$, then for any $r>0$, we have $rG\in |rmD|$ and therefore
\[ \tau_+   (\omega _X, \frac \lambda m \cdot | mD|)=\sum _{G\in |mD|}\tau_+   (\omega _X, \frac \lambda m  G)=\sum _{G\in |mD|} \tau_+   (\omega _X, \frac \lambda {mr} \cdot rG)\subseteq 
 \tau_+   (\omega _X, \frac \lambda {mr} \cdot | mrD|).\]
 Because $X$ is Noetherian, the set $\{ \tau_+   (\omega _X, \frac \lambda m \cdot | mD|)\ s.t.\ |mD|\ne \emptyset\}$ has a maximal element and the claim follows. \end{proof}
 
 \begin{lemma} \label{l-easy}
    With the above notation (and continuing to assume that $X$ is nonsingular)
    \begin{enumerate}
        \item If $|D|\ne \emptyset$ then  $\tau_+  (\omega _X,\lambda \cdot | D|)\subseteq \tau_+  (\omega _X,\lambda \cdot || D||)$.\label{l-easy.1}
        \item  If $D$ has $\bZ$-coefficients and $|D|$ is non-empty, then
            $\tau_+  (\omega _X, | D|)= \frak b_{|D|}\otimes \omega _X$ or in other words $\tau_+(\cO_X, |D|) = \frak{b}_{|D|}$.\label{l-easy.2}
        \item If $\lambda <\mu$, then 
            $\tau_+  (\omega _X,\mu \cdot || D||)\subseteq \tau_+  (\omega _X,\lambda \cdot || D||)$.\label{l-easy.3}
        \item Let $k>0$ be an integer, then   $\tau_+  (\omega _X,\lambda \cdot || D||)=\tau_+  (\omega _X,\frac \lambda k \cdot ||k D||)$.\label{l-easy.4}
        \item If $L-\lambda D$ is big and semiample and $A$ is ample and globally generated, then 
            \[
                \tau_+(\omega _X,\lambda \cdot || D||)\otimes \cO _X(nA+L) = \tau_+(\cO_X,\lambda \cdot || D||)\otimes \cO _X(L + K_X + nA) 
            \]         
            is generated by a sub linear series of $H^0(X,\cO _X(K_X + nA+L))$ where $n=\dim X_{\frak m}$.\label{l-easy.5}
        \item If $D-G$ is semiample, then  $\tau_+  (\omega _X,\lambda \cdot || G||) \subseteq  \tau_+  (\omega _X,\lambda \cdot || D||)$.\label{l-easy.6}
    \end{enumerate}
\end{lemma}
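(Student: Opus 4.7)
Parts (1)--(4) are formal consequences of the definitions together with earlier results. Part (1) is immediate from the defining union $\tau_+(\omega_X,\lambda\cdot||D||)=\bigcup_m \tau_+(\omega_X,\frac{\lambda}{m}|mD|)$ by taking $m=1$. For (2), since $X$ is regular, \autoref{prop.tauAgreesWithOmegaOnSNCLocus} gives $\tau_+(\omega_X)=\omega_X$, and then \autoref{lem.BasicFactsAboutTauAndAdj}(b) yields $\tau_+(\omega_X,G)=\omega_X\otimes \cO_X(-G)$ for each $G\in|D|$; summing across $G\in|D|$ produces $\omega_X\otimes \frb_{|D|}$. Part (3) follows from the monotonicity of $\tau_+(\omega_X,\cdot)$ in its boundary from \autoref{lem.BasicFactsAboutTauAndAdj}(a), which passes through both the sum over divisors in $|mD|$ and the union over $m$. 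Part (4) is a reindexing of unions: replacing $m$ by $km$ on the right exhibits a cofinal subsystem of the union on the left, and both unions stabilize by \autoref{l-asy}.

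For (5), apply \autoref{l-asy} to fix $m$ sufficiently divisible so that $\tau_+(\omega_X,\lambda\cdot||D||)=\sum_{D_i\in|mD|}\tau_+(\omega_X,\frac{\lambda}{m}D_i)$. Since $D_i\sim mD$, the divisor $L-\frac{\lambda}{m}D_i\sim L-\lambda D$ is big and semi-ample. The effective global generation statement \autoref{t-eff} then shows that each summand $\tau_+(\omega_X,\frac{\lambda}{m}D_i)\otimes \cO_X(nA+L)$ is globally generated by $\myB^0(X,\frac{\lambda}{m}D_i,\omega_X\otimes \cO_X(nA+L))\subseteq H^0(X,\cO_X(K_X+nA+L))$, provided $n\geq \dim X_\fram$. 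Summing these generating subspaces over $D_i\in|mD|$ yields the required sub linear series.

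The main obstacle is (6). The essential input is that $\tau_+(\omega_X,\cdot)$ commutes with restriction to open subsets, as established in the section on passing to affine charts (see \autoref{prop.Tau+IsIndependentOfCompactifications} and the corollary following it). Pick $m$ sufficiently divisible so that \autoref{l-asy} applies to both $G$ and $D$, and so that $m(D-G)$ is an integer Weil divisor with $\cO_X(m(D-G))$ globally generated; this is possible because $D-G$ is semi-ample. For each fixed $D_0\in|mG|$, the plan is to show $\tau_+(\omega_X,\frac{\lambda}{m}D_0)\subseteq \tau_+(\omega_X,\lambda\cdot||D||)$ stalk by stalk. Given $x\in X$, basepoint-freeness of $|m(D-G)|$ produces some $F_x\in|m(D-G)|$ with $x\notin\Supp F_x$; on any affine open neighborhood $U$ of $x$ disjoint from $\Supp F_x$ the $\bQ$-divisors $\frac{\lambda}{m}D_0$ and $\frac{\lambda}{m}(D_0+F_x)$ coincide. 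Consequently
\[
\tau_+(\omega_X,\tfrac{\lambda}{m}D_0)|_U \;=\; \tau_+(\omega_X,\tfrac{\lambda}{m}(D_0+F_x))|_U \;\subseteq\; \tau_+(\omega_X,\tfrac{\lambda}{m}|mD|)|_U \;=\; \tau_+(\omega_X,\lambda\cdot||D||)|_U,
\]
using that $D_0+F_x\in|mD|$. This yields the stalk containment at $x$, hence the global inclusion, and summing over $D_0\in|mG|$ completes the argument.
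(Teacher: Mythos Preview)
Your proofs of parts (1)--(5) are correct and essentially identical to the paper's arguments.

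For part (6), your argument is correct but takes a genuinely different route from the paper. The paper works globally: it uses the twisting formula \autoref{lem.BasicFactsAboutTauAndAdj}\autoref{lem.BasicFactsAboutTauAndAdj.b} to write, for each $F\in|m(D-G)|$ and $c=\lceil\lambda/m\rceil$,
\[
\tau_+(\omega_X,\lambda\cdot||G||)\otimes\cO_X(-cF)=\sum_i\tau_+(\omega_X,\tfrac{\lambda}{m}G_i+cF)\subseteq\sum_i\tau_+(\omega_X,\tfrac{\lambda}{m}(G_i+F))\subseteq\tau_+(\omega_X,\lambda\cdot||D||),
\]
using monotonicity (since $cF\geq\tfrac{\lambda}{m}F$) and $G_i+F\in|mD|$. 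Then, because $|m(D-G)|$ is base point free, summing over $F$ gives $\sum_F\cO_X(-cF)=\cO_X$ and the claim follows. Your approach instead localizes: you invoke the results of \autoref{sec.Tau+OnAffineCharts} to conclude that $\tau_+(\omega_X,B)|_U$ depends only on $B|_U$, and then observe that $D_0$ and $D_0+F_x$ agree on a neighborhood of $x$. This is more conceptual and avoids the auxiliary constant $c$, but it relies on the heavier machinery of \autoref{sec.Tau+OnAffineCharts}; the paper's argument stays within the elementary toolkit of \autoref{lem.BasicFactsAboutTauAndAdj} and is entirely self-contained within the section.
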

\begin{proof} 
    Claim \autoref{l-easy.1} follows from the definition.

    Claim \autoref{l-easy.2} follows because $X$ is nonsingular and so by \autoref{lem.SimplePropertiesOfB0_D},
    \[\tau_+  (\omega _X, | D|)=\sum_{G\in |D|} \tau_+  (\omega _X,  G)=\sum  \omega _X (-G) =
    \left( \sum  \mathcal O _X (-G)\right) \otimes \omega _X= \frak b_{|D|}\otimes \omega _X.\]

    Claim \autoref{l-easy.3} follows because for any $m>0$ sufficiently divisible, we have
    \[\tau_+  (\omega _X,\lambda \cdot || D||)=\tau_+  (\omega _X,\frac \lambda {m} \cdot |m D|) \supset \tau_+  (\omega _X,\frac \mu {m} \cdot |m D|)=\tau_+  (\omega _X,\mu \cdot || D||).\]

    Claim \autoref{l-easy.4} follows because  for any $m>0$ sufficiently divisible, we have \[\tau_+  (\omega _X,\lambda \cdot || D||)=\tau_+  (\omega _X,\frac \lambda {mk} \cdot |mk D|)=\tau_+  (\omega _X,\frac {\lambda/k} {m} \cdot |m(k D)|)=\tau_+  (\omega _X,\frac \lambda k \cdot ||k D||).\]

    To see \autoref{l-easy.5}, for some $G\in |mL|$ note that by \autoref{t-eff},
    $\tau _+(\omega _X,\frac \lambda m \cdot G)\otimes \cO _X(nA+L)$ is generated by $\mathbf B^0(X,\frac \lambda m \cdot G , \omega _X \otimes \cO _X(nA+L))$. Thus $\tau_+  (\omega _X,\lambda \cdot || D||)\otimes \cO _X(nA+L)$ is generated by \[\sum _{i=1}^r {\mathbf B}^0(X,\frac \lambda m \cdot G _i, \omega _X \otimes \cO _X(nA+L))\subseteq H^0(X,\cO_X (K_X+nA+L))\] for appropriately chosen $G_i\in |mL|$.
    
    We will now prove \autoref{l-easy.6}. For $m>0$ sufficiently divisible, we have
    \[ \tau_+  (\omega _X,\lambda \cdot || G||) =\tau_+  (\omega _X,\frac \lambda m \cdot |m G|), \qquad  \tau_+  (\omega _X,\frac \lambda m \cdot | mD|) = \tau_+  (\omega _X,\lambda \cdot || D||).\]
    Since $D-G$ is semiample, we may assume that $|m(D-G)|$ is base point free. For any $F\in  |m(D-G)|$ we have an inclusion $W:=|mG|+F\subseteq |mD|$.
    Let $c=\lceil \frac \lambda m\rceil$ and pick divisors $D_1,\ldots, D_r\in |mD|$ and $G_1,\ldots , G_s\in |mG|$ such that \[\tau _+(\omega _X ,\lambda \cdot ||D||)=\sum _{i=1}^r \tau _+(\omega _X ,\frac \lambda m \cdot D_i), \qquad \tau _+(\omega _X ,\lambda \cdot ||G||)=\sum _{i=1}^s \tau _+(\omega _X ,\frac \lambda m \cdot G_i).\]
    Possibly adding some $G_i+F$ to the set $\{D_1,\ldots , D_r\}$, we may also assume that $\{G_1+F,\ldots , G_s+F\}\subseteq \{ D_1,\ldots , D_r\}$.
    Thus
      \[
          \tau _+(\omega _X ,\lambda \cdot ||G||)\otimes \mathcal O _X(-cF)=
       \sum _{i=1}^s \tau _+(\omega _X ,\frac \lambda m \cdot G_i)\otimes \mathcal O _X(-cF)=
       \sum _{i=1}^s \tau _+(\omega _X ,\frac \lambda m \cdot G_i+c\cdot F) 
       \]
       \[
        \subseteq \sum _{i=1}^s \tau _+(\omega _X ,\frac \lambda m \cdot (G_i+ F))\subseteq \sum _{i=1}^r \tau _+(\omega _X ,\frac \lambda m \cdot D_i)= \tau _+(\omega _X ,\lambda \cdot ||D||).
    \]
   Since $|m(D-G)|$ is base point free, it follows that $\sum _{F\in |m(D-G)|}\mathcal O _X(-F)=\mathcal O _X$ and hence 
    \[
        \tau_+  (\omega _X,\lambda \cdot || G||) =\sum _{F\in |m(D-G)|}\tau_+  (\omega _X, \lambda \cdot  || G||) \otimes \cO _X(-cF) \subseteq \tau_+  (\omega _X, \lambda \cdot || D||).
    \]
    
\end{proof}

\begin{theorem} 
    \label{thm.OrdIndependence}
    Let $Z$ be a closed integral subscheme of a regular scheme $X$ over ${\rm Spec }(R)$ and $D$ a big $\mathbb Q$-divisor. Then ${\rm ord}_Z(||D||)$ is independent of the representative in the numerical class of $D$.
\end{theorem}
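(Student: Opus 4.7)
The plan is to combine an ample--perturbation construction with continuity of $\sigma(D):=\mathrm{ord}_Z(\|D\|)$ along ample directions. By symmetry it suffices to prove $\sigma(D+P)\le \sigma(D)$ for $P:=D'-D \equiv 0$.

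First I would record formal properties of $\sigma$ on the cone of big $\mathbb{Q}$-divisors. Positive homogeneity $\sigma(\lambda D)=\lambda \sigma(D)$ is immediate from the definition, and subadditivity $\sigma(D_1+D_2)\le \sigma(D_1)+\sigma(D_2)$ follows from multiplication of sections: the inclusion $|mD_1|+|mD_2|\subseteq |m(D_1+D_2)|$ gives $\mathrm{ord}_Z(|m(D_1+D_2)|)\le \mathrm{ord}_Z(|mD_1|)+\mathrm{ord}_Z(|mD_2|)$, and one passes to the limit. Together these make $\sigma$ convex. The next step is to prove continuity in an ample direction: for any big $E$ and ample $A$, $\sigma(E+\epsilon A)\to \sigma(E)$ as $\epsilon\to 0^+$ through rational values. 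The inequality $\sigma(E+\epsilon A)\le \sigma(E)$ is clear from a generic $H\in|k\epsilon A|$ with $\mathrm{ord}_Z(H)=0$ (valid because the base locus of a very ample multiple of $A$ is empty, so generic sections avoid $Z$). For the reverse, Kodaira's lemma gives that $E-cA$ is big for small rational $c>0$, and then the convex combination $E=\tfrac{1}{1+\epsilon/c}(E+\epsilon A)+\tfrac{\epsilon/c}{1+\epsilon/c}(E-cA)$, together with convexity, yields $\sigma(E)\le \sigma(E+\epsilon A)+(\epsilon/c)\,\sigma(E-cA)$, so $\liminf_{\epsilon\to 0^+}\sigma(E+\epsilon A)\ge \sigma(E)$.

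For the main perturbation, fix an ample Cartier divisor $A$. For any rational $\epsilon>0$, Kleiman's criterion shows $P+\epsilon A$ is ample, since $(P+\epsilon A)\cdot C=\epsilon(A\cdot C)>0$ for every curve $C$. Pick $k>0$ so that $k(P+\epsilon A)$ is very ample and integral, and choose a generic $N_\epsilon\in |k(P+\epsilon A)|$ with $Z\not\subseteq \mathrm{Supp}(N_\epsilon)$; this is possible because the base locus of a very ample linear system is empty. Then $\mathrm{ord}_Z(N_\epsilon)=0$, and the linear equivalence $kD+N_\epsilon\sim kD'+k\epsilon A$ gives, for each $m\ge 1$ and each $G_m\in |mkD|$, a divisor $G_m+mN_\epsilon\in |mk(D'+\epsilon A)|$ satisfying $\mathrm{ord}_Z(G_m+mN_\epsilon)=\mathrm{ord}_Z(G_m)$. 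Minimizing over $G_m$, dividing by $mk$, and sending $m\to \infty$ yields $\sigma(D'+\epsilon A)\le \sigma(D)$. Letting $\epsilon\to 0^+$ through rationals and invoking the continuity proved above gives $\sigma(D')\le \sigma(D)$; the reverse inequality follows by swapping the roles of $D$ and $D'$.

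The main obstacle is ensuring that the formal ingredients (convexity, Kodaira's lemma, and Kleiman's numerical criterion for ampleness of a numerically trivial perturbation of an ample divisor) are available in the mixed characteristic, projective-over-complete-local setting. These are intersection-theoretic facts that carry over from the characteristic zero picture without appeal to resolution of singularities or Frobenius, but they do require care to verify in the relative/arithmetic setup.
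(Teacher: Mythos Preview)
Your argument is correct and takes a genuinely different, more elementary route than the paper's proof. The paper leverages its newly developed $\tau_+$-machinery: it fixes $G\in|lD-(K_X+(n+1)A)|$, applies the effective global generation theorem to conclude that $\tau_+(\cO_X,(m-l)\cdot\|D\|)\otimes\cO_X(mE-G)$ is globally generated, and hence $\cO_X(-G)\cdot\mathfrak b_{|mD|}\subseteq\cO_X(-G)\cdot\tau_+(\cO_X,(m-l)\cdot\|D\|)\subseteq\mathfrak b_{|mE|}$; the order inequality then drops out. This is exactly the Musta\c{t}\u{a}/ELMNP strategy, transplanted to mixed characteristic via $\tau_+$. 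Your approach bypasses test ideals entirely, using only subadditivity of $\sigma$, Kodaira's lemma, and the numerical nature of ampleness. What the paper's route buys is a uniform demonstration that the $\tau_+$ theory is strong enough to recover these classical invariance statements (which is part of the point of the section); what your route buys is independence from that machinery, at the cost of needing to verify Kleiman-type input in the relative setting.

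Two small remarks. First, your justification ``$(P+\epsilon A)\cdot C>0$ for every curve $C$'' is not literally Kleiman's criterion (strictly nef does not imply ample); the correct phrasing is that $P+\epsilon A\equiv\epsilon A$ and ampleness is a numerical property. Second, the foundational facts you flag do go through here: ampleness for $X\to\Spec R$ with $R$ local can be checked on the closed fiber $X_{\mathfrak m}$, and $P|_{X_{\mathfrak m}}$ is numerically trivial on the projective variety $X_{\mathfrak m}$ over the residue field, where the usual Kleiman theorem applies; Kodaira's lemma is likewise available. So your caveat, while appropriate to raise, is not an obstruction.
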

 \begin{proof} 
        Recall that \[ {\rm ord}_Z(||D||)=\lim \frac 1 m {\rm ord}_Z(|mD|)={\rm inf }\{ \frac 1 m {\rm ord}_Z(|mD|) \}.\]
    Let $E\equiv D$ be numerically equivalent big $\mathbb Q$-divisors. We must show that 
    ${\rm ord}_Z(||D||)={\rm ord}_Z(||E||)$. Replacing $D,E$ by appropriate multiples, we may assume that $D,E$ have integer coefficients. Pick $l>0$ such that $lD-(K_X+(n+1)A)\sim G>0$ where $A$ is ample and globally generated and $n=\dim X_{\frak m}$.
    Then $mE-G\equiv (m-l)D+K_X+(n+1)A$.
    In particular 
    \[
        m E - G - K_X - (m-l)D - nA \equiv A
    \] 
    is ample (and so big and semi-ample).

    Since, by \autoref{l-easy}(e),  $\tau _+(\cO_X,(m-l)\cdot ||D||)\otimes \cO _X(mE-G)$ is globally generated, it follows that 
    \[
        \tau _+(\cO_X,(m-l)\cdot ||D||)\subseteq \frak b_{|mE-G|}
    \] 
    where $\frak b_{|mE-G|}$ is the base ideal of the linear series $|mE-G|$.
    Thus 
    \[ 
        \cO _X(-G)\cdot \tau _+(\cO_X,(m-l)\cdot ||D||)\subseteq \cO _X(-G)\cdot\frak b_{|mE-G|}\subset
        \frak b_{|mE|},
    \]
    where the last inequality is induced by the multiplication map $|G|\times  |mE-G|\to |mE|$.
    
    It follows by \autoref{l-easy}(a--d), that for $m>l$ we have 
    \[
        \cO _X(-G)\cdot \frak b_{|mD|}\subseteq \cO _X(-G)\cdot \frak \tau_+(\cO_X,||mD||) \subseteq 
    \cO_X(-G)\cdot \frak \tau _+(\cO_X,||(m-l)D||)   \subseteq \frak b_{|mE|}.
    \]
    But then \[{\rm ord}_Z(G)+{\rm ord}_Z(|mD|)\geq {\rm ord}_Z(|mE|)\] and passing to the limit we obtain ${\rm ord}_Z(||D||)\geq {\rm ord}_Z(||E||)$. The reverse inequality follows by symmetry and this concludes the proof.
\end{proof}

\begin{theorem} 
    \label{thm.ZInDiminishedBaseLocus}
    Suppose that $R/\frak m$ is infinite. Let $Z$ be a closed irreducible reduced subscheme with associated generic point $\eta_Z$. 
    If $D$ is big then the following are equivalent
    \begin{enumerate}
        \item $Z\not \subseteq {\mathbf B}_-(D)$,
        \item there exists a divisor $G$ such that $Z\not \subseteq {\rm Bs}|mD+G|$ for all $m\geq 1$,
        \item there exists an integer $M>0$ such that ${\rm ord}_Z(|mD|)\leq M$ for all $m\gg 0$,
        \item  ${\rm ord}_Z(||D||)=0$, and
        \item $\tau _+(\cO _X,\lambda \cdot ||D||)$ does not vanish along $Z$ for any $\lambda >0$.
    \end{enumerate}
 \end{theorem}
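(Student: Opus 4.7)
The plan is to prove the cyclic chain $(1) \Rightarrow (2) \Rightarrow (3) \Rightarrow (4) \Rightarrow (5) \Rightarrow (1)$. The implications among (1)--(4) are characteristic-independent and follow from the standard theory of asymptotic base loci for big divisors (see \cite{NakayamaZariskiDecompAndAbundance} and \cite{EinLazMusNakPopAsymptoticInvariantsOfBaseLoci}); since those arguments depend only on bigness of $D$, they transfer without modification to our mixed-characteristic base. Concretely: $(1)\Rightarrow(2)$ picks an ample $\bQ$-divisor $A$ with $Z\not\subseteq\mathbf{B}(D+A)$ and $p_0>0$ with $Z\not\subseteq\mathrm{Bs}|p_0(D+A)|$, then uses bigness of $D$ to build a single $G$ (combining $p_0 A$ with effective representatives of multiples of $D$ that handle residues modulo $p_0$) so that $Z$ avoids $\mathrm{Bs}|mD+G|$ for every $m\geq 1$; $(2)\Rightarrow(3)$ proceeds similarly, bounding $\mathrm{ord}_Z|mD|$ uniformly in $m$ via $G$ together with the nonemptiness of $|mD|$ for $m\gg 0$; $(3)\Rightarrow(4)$ is immediate from $\mathrm{ord}_Z||D||=\inf_m\mathrm{ord}_Z|mD|/m$; and $(4)\Rightarrow(1)$ uses $\mathbf{B}_-(D)=\bigcup_A\mathbf{B}(D+A)$ together with $\mathrm{ord}_Z||D+A||\leq\mathrm{ord}_Z||D||$ for ample $A$.

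For $(5)\Rightarrow(4)$, I will fix $\lambda>0$ and, using \autoref{l-asy}, express
\[
    \tau_+(\cO_X,\lambda\cdot||D||)=\sum_{D_j\in|mD|}\tau_+(\cO_X,(\lambda/m)D_j)
\]
for $m$ sufficiently divisible. Non-vanishing at $Z$ then yields some summand $\tau_+(\cO_X,(\lambda/m)D_j)$ that does not vanish at $Z$, and the estimate
\[
    \mathrm{ord}_Z(\tau_+(\omega_X,\lambda\Delta))>\lambda\cdot\mathrm{ord}_Z(\Delta)-\codim(Z,X)
\]
proved earlier (the lemma immediately following \autoref{cor.Tau+InvertPIsInMultiplier}), translated to the ideal version by choosing a local representative of $K_X$ not containing $Z$ (possible since $X$ is regular along $Z$), forces $\mathrm{ord}_Z(D_j)<m\codim(Z,X)/\lambda$. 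Consequently $\mathrm{ord}_Z|mD|<m\codim(Z,X)/\lambda$, so $\mathrm{ord}_Z||D||\leq\codim(Z,X)/\lambda$, and letting $\lambda\to\infty$ gives (4).

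For $(4)\Rightarrow(5)$, I will fix $\lambda>0$, choose $\epsilon\in(0,1/\lambda)$, and use (4) to obtain sufficiently divisible $m$ and $D_m\in|mD|$ with $\mathrm{ord}_Z(D_m)<\epsilon m$. Then $\mult_{\eta_Z}((\lambda/m)D_m)=(\lambda/m)\mathrm{ord}_Z(D_m)<1$, so \autoref{cor.MultiplicityBoundOnThreshold} gives $\tau_+(\cO_X,(\lambda/m)D_m)=\cO_X$ in a neighborhood of $\eta_Z$, and the inclusion $\tau_+(\cO_X,\lambda\cdot||D||)\supseteq\tau_+(\cO_X,(\lambda/m)D_m)$ yields (5). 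The main obstacle will be the careful translation in $(5)\Rightarrow(4)$ between the $\omega_X$-submodule version of the vanishing estimate and the ideal-sheaf version entering (5); this is manageable precisely because $X$ is regular near $\eta_Z$, which lets us choose $K_X$ locally as a Cartier divisor avoiding $Z$ so that the two notions of ``order at $Z$'' coincide. The infinite-residue-field hypothesis is inherited through \autoref{cor.MultiplicityBoundOnThreshold}.
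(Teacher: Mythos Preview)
Your argument for $(4)\Rightarrow(5)$ is circular. You invoke \autoref{cor.MultiplicityBoundOnThreshold} to conclude that $\mult_{\eta_Z}((\lambda/m)D_m)<1$ forces $\tau_+(\cO_X,(\lambda/m)D_m)=\cO_X$ near $\eta_Z$, but in the paper that corollary is not proven independently: its proof is literally ``follows immediately along the lines of the proof that (d) implies (e) in \autoref{thm.ZInDiminishedBaseLocus}.'' That inductive argument---restrict to a general hypersurface $Y\in|H\otimes\sI_Z|$ through $Z$, use the adjunction/inversion of \autoref{thm.AdjunctionForAdjTau} to identify $\adj_+^Y(\cO_X,Y+G/m)\cdot\cO_{Y^{\Norm}}$ with a $\tau_+$ on $Y$, apply induction on $\codim(Z,X)$, and feed back via \autoref{lem.SimpleAdjointTestContainmentsForIdeals}---is exactly the mixed-characteristic content you must supply here. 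Citing the corollary does not discharge it.

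There is a second gap in your sketch of $(4)\Rightarrow(1)$. From $\mathrm{ord}_Z\|D+A\|=0$ you cannot directly conclude $Z\not\subseteq\mathbf{B}(D+A)$: it is entirely possible that $\mathrm{ord}_Z|m(D+A)|\geq 1$ for every $m$ while the limit $\tfrac{1}{m}\mathrm{ord}_Z|m(D+A)|$ tends to zero. The paper avoids this by never proving $(d)\Rightarrow(a)$ directly; instead it closes the cycle through $(e)\Rightarrow(b)$ using the effective global generation of \autoref{t-eff} (equivalently \autoref{l-easy}(e)): taking $G=K_X+(n+1)H$ with $H$ very ample and $n=\dim X_{\fram}$, the sheaf $\tau_+(\cO_X,\|mD\|)\otimes\cO_X(mD+G)$ is globally generated, so non-vanishing of $\tau_+$ along $Z$ produces a section of $|mD+G|$ not vanishing on $Z$. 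Your $(5)\Rightarrow(4)$ via the order-of-vanishing lemma is a valid alternative to the paper's $(e)\Rightarrow(b)$, but by itself it does not get you back to $(1)$ without either a correct $(4)\Rightarrow(1)$ or the global-generation step.
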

 \begin{proof} 
 
 (b) implies (c). Since $D$ is big, we may pick an integer $m_0>0$ and  a divisor $T\in |m_0D-G|$.
 For any $m>0$ we may pick a divisor 
 $S_m\in |mD+G|$ whose support does not contain $Z$, then ${\rm ord}_Z(|(m+m_0)D|)\leq {\rm ord}_Z(T+S_m)=M$ where $M={\rm ord}_Z(T)$.
 
  (c) implies (d). This is immediate since ${\rm ord}_Z(||D||)=\lim ({\rm ord}_Z(|mD|)/m)\leq \lim M/m=0$.
  
   (d) implies (e).
   For inductive purposes, we weaken the hypothesis that $X$ is regular.  Instead, we merely assume that $X$ is regular in a neighborhood of $\eta_Z$.
   We may assume that $\lambda =1$. Since $X$ is regular on a neighborhood of $\eta _Z$ there is an isomorphism $\omega _X\cong \mathcal O _X$ on such a neighborhood. 
   Fix $m>0$ such that ${\rm ord}_Z(|mD|)<m$. Let $G\in |mD|$ be a sufficiently general section, then ${\rm ord}_Z(G/m)<1$.
   We now proceed by induction on the codimension of $Z$ in $X$.
   If $Z$ is a divisor in $X$, then on a neighborhood of $\eta _Z$, the pair $(X,G/m)$ has normal crossings with coefficients $< 1$, and so on this neighborhood we have 
   \[
       \mathcal O _X\subseteq \tau _+(\cO _X, |G/m|)\subseteq \tau _+(\cO _X, ||D||)
    \]
   and the claim follows.
   We will now assume that $\dim X-\dim Z\geq 2$ and we will proceed by induction restricting to hyperplanes. Let $H$ be sufficiently ample and $Y\in |H\otimes \mathcal I _Z|$ a sufficiently general hypersurface vanishing along $Z$. We may assume that $Y$ is regular on a neighborhood of $\eta _Z$, and that ${\rm ord}_Z(G|_Y)={\rm ord}_Z(G)<m$. 
   There is an inclusion
  \begin{equation} \label{e-1} 
    {\rm adj}^Y_+(\cO _X,Y+G/m)\subseteq \tau _+(\cO _X,(1-\epsilon) Y+G/m)\subseteq \tau _+(\cO _X,G/m)
\end{equation}
   and we have the equality
   \begin{equation} \label{e-2} 
    {\rm adj}^Y_+(\cO _X,Y+G/m)\cdot \mathcal O _{Y^{\Norm}}=\tau _+(\cO _{Y^{\Norm}}, {\rm Diff}_{Y^{\Norm}}(Y+G/m)). 
\end{equation}
   Note that, on a neighborhood of $\eta _Z$, we have $Y^{\Norm}\to Y$ is an isomorphism. By induction on the dimension, $\tau _+(\cO _{Y^{\Norm}}, {\rm Diff}_{Y^{\Norm}}(Y+G/m))=\mathcal O _Y$ in a neighborhood of $\eta_Z$ and so by \eqref{e-2} $ {\rm adj}^Y_+(\cO _X,Y+G/m)\cong \mathcal O _X$. From the inclusion \eqref{e-1}, it follows that $ \tau _+(\cO_X,G/m)=\mathcal O _X$. Repeating this argument $\dim X -\dim Z - 1$ times, we may assume that $Z$ is a divisor in $X$.
   

      (e) implies (b). Pick $G=K_X+(d+1)H$ where $H$ is very ample and $d=\dim X_{\frak m}$. Then $\tau _+(\mathcal O_X,||mD||)\otimes \cO _X(mD+G)$ is globally generated and so $Z$ is not contained in the base locus of $|mD+G|$.
 
  (b) implies (a). Fix $A$ an ample $\mathbb Q$-divisor. Pick $m>0$ such that $mA-G$ is ample.
  Then ${\mathbf B}(D+A)={\mathbf B}(m(D+A))\subseteq {\mathbf B}(mD+G)$. Thus $Z\not \subseteq {\mathbf B}(D+A)$ and the claim follows as ${\mathbf B}_-(D)=\bigcup {\mathbf B}(D+A)$.
  
   (a) implies (d). By assumption  $Z\not \subseteq {\mathbf B}(D+\frac 1 i A)$ for any $i>0$ where $A$ is a fixed very ample divisor. Since $D$ is big, there is an integer $j>0$ such that $jD-A\sim G\geq 0$. But then $|(k(i+j))D|\supset kG+|ki(D+\frac 1 i A)|$ so that 
   ${\rm ord}_Z(|(k(i+j))D|)  \leq k\cdot {\rm ord}_Z(G)$ for any $k>0$ sufficiently divisible. Then  
   \[  
        {\rm ord}_Z(||D||)\leq
        \frac{{\rm ord}_Z(|(k(i+j))D|)}{k(i+j)} \leq  \frac{k \cdot {\rm ord}_Z(G)}{k(i+j)}= \frac{ {\rm ord}_Z(G)}{i+j}.
   \]
   Note that as $j$ is fixed, the right hand side tends to 0 as $i$ goes to infinity and hence $ {\rm ord}_Z(||D||)=0$.
 \end{proof}

 In our proof of (d) implies (e) above, we showed the following.  Compare with \cite[Theorem 1.1]{KadyrsizovaKenkelPageSinghSmith-ExtramalSings} and also \cite{SatoStabilityOfTestIdeals}.

 \begin{corollary} 
    \label{cor.MultiplicityBoundOnThreshold}
    With the same assumptions of \autoref{thm.ZInDiminishedBaseLocus}.
 If $G\geq 0$ is a $\Q$-divisor such that ${\rm ord}_Z(G)<1$, then $\tau _+(\mathcal O _X, G)=\mathcal O _X$ on a neighborhood of $Z$.
 \end{corollary}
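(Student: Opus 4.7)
The plan is to extract and formalize the argument already implicit in the proof of (d)$\Rightarrow$(e) in \autoref{thm.ZInDiminishedBaseLocus}. The statement is local around the generic point $\eta_Z$ of $Z$, so we work on a neighborhood of $\eta_Z$ where $X$ is regular (we only need $X$ regular at $\eta_Z$, not globally). We proceed by induction on $c := \codim(Z, X)$.

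For the base case $c = 1$, the divisor $Z$ itself appears with multiplicity $\ord_Z(G) < 1$ in $G$, and since $X$ is regular at $\eta_Z$, after shrinking we may assume $G$ has simple normal crossing support with all coefficients $< 1$ in a neighborhood of $\eta_Z$. Then \autoref{prop.tauAgreesWithOmegaOnSNCLocus} gives $\tau_+(\cO_X, G) = \cO_X$ there.

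For the inductive step $c \geq 2$, choose a sufficiently ample line bundle $\sH$ and a sufficiently general $Y \in |\sH \otimes \mathcal I_Z|$ passing through $Z$. By Bertini-type generality at the regular point $\eta_Z$, we may assume (i) $Y$ is regular at $\eta_Z$, (ii) $Y$ and $G$ share no common component at $\eta_Z$, and (iii) $\ord_Z(G|_Y) = \ord_Z(G) < 1$. By \autoref{lem.SimpleAdjointTestContainmentsForIdeals}, for any $0 < \epsilon \leq 1$ we have
\[
    \adj_+^Y(\cO_X, G) \subseteq \tau_+(\cO_X, (1-\epsilon)Y + G) \subseteq \tau_+(\cO_X, G),
\]
so it suffices to show that the adjoint ideal equals $\cO_X$ near $\eta_Z$. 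By \autoref{thm.AdjunctionForAdjTau},
\[
    \adj_+^Y(\cO_X, G) \cdot \cO_{Y^N} = \tau_+(\cO_{Y^N}, \Diff_{Y^N}(Y+G)).
\]
Near $\eta_Z$ the normalization $Y^N \to Y$ is an isomorphism (since $Y$ is regular there), the different vanishes locally, and the image of $Z$ in $Y^N$ has codimension $c-1$ with $\ord_Z(\Diff_{Y^N}(Y+G)) = \ord_Z(G|_Y) < 1$. The inductive hypothesis applied to $Y^N$ then yields $\tau_+(\cO_{Y^N}, \Diff_{Y^N}(Y+G)) = \cO_{Y^N}$ in a neighborhood of $\eta_Z$, so $\adj_+^Y(\cO_X, G) \cdot \cO_Y = \cO_Y$ there, and Nakayama gives $\adj_+^Y(\cO_X, G) = \cO_X$ near $\eta_Z$. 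Combined with the containment above, this closes the induction.

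The only real subtlety is the Bertini-style choice of $Y$: we need $Y$ regular at $\eta_Z$, transverse enough that the different is trivial locally, and such that $K_X + Y + G$ (or rather its effective adjustment) is $\bQ$-Cartier on a neighborhood so \autoref{thm.AdjunctionForAdjTau} is applicable. Since everything is local at $\eta_Z$ on a regular scheme, a generic very ample hypersurface vanishing along $Z$ does the job after possibly shrinking $X$. This is the same argument as in the proof of \autoref{thm.ZInDiminishedBaseLocus}(d)$\Rightarrow$(e); the corollary simply isolates the quantitative multiplicity bound obtained along the way.
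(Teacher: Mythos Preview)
Your proposal is correct and follows essentially the same approach as the paper: induction on $\codim(Z,X)$, the SNC base case via \autoref{prop.tauAgreesWithOmegaOnSNCLocus}, and the inductive step via a general hypersurface $Y$ through $Z$ together with the containment of \autoref{lem.SimpleAdjointTestContainmentsForIdeals} and the adjunction formula of \autoref{thm.AdjunctionForAdjTau}. If anything, you have written out a couple of points more explicitly than the paper does (e.g.\ the Nakayama step lifting $\adj_+^Y(\cO_X,G)\cdot\cO_Y=\cO_Y$ to $\adj_+^Y(\cO_X,G)=\cO_X$ near $\eta_Z$). One tiny wording issue: when you say ``the different vanishes locally'' you presumably mean the singular contribution to the different vanishes, so that $\Diff_{Y^{\Norm}}(Y+G)=G|_Y$ near $\eta_Z$; your subsequent formula $\ord_Z(\Diff_{Y^{\Norm}}(Y+G))=\ord_Z(G|_Y)<1$ makes this clear, but the phrasing could be tightened.
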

 \begin{proof} The proof follows immediately along the lines of the proof that (d) implies (e) in \autoref{thm.ZInDiminishedBaseLocus}.
 \end{proof}

 \section{Further open questions}

We mention several of the most important open questions associated with the theory developed in this paper.

\subsection{Localization and completion}

    Suppose $X$ is projective over a complete local domain $(R, \fram)$.  For any point on $x \in X$, we have can define $\tau_+(\cO_{X,x}, B)$ as the stalk of $\tau_+(\cO_{X}, B)$ at $x$.  We state what we hope happens when we then complete at $x$.

    \begin{conjecture}
        \label{conj.CommuteWithCompletion}
        Suppose that $X$ is a normal integral scheme of finite type over a complete local ring $(R, \fram)$ such that $R/\fram$ has positive characteristic.  Further suppose that $B \geq 0$ is a $\bQ$-divisor on $X$ such that $K_X + B$ is $\bQ$-Cartier.  Fix a point $x \in X$ with residual characteristic $p > 0$ and let $S = \cO_{X,x}$ denote the stalk at $x$.  Then we have that
        \[
            {\tau_+(\cO_{X}, B)_x} \cdot \widehat{S} = \tau_+(\widehat{S}, \widehat{B}) = \tau_{\widehat{R^+}}(\widehat{S}, \widehat{B})
        \]
        and that
        \[
            \tau_+(\cO_{X}, B)_x = \tau_+(\widehat{S}, \widehat{B}) \cap S.
        \]
    \end{conjecture}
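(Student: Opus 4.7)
The plan would be to reduce to the affine case, establish the easy direction, and then work hard on a lifting / approximation argument for the reverse direction. First, I would use the results of Section~5, especially \autoref{prop.Tau+IsIndependentOfCompactifications} and its corollary, to reduce to the setting $X = \Spec T$ for a normal finite type $R$-algebra $T$, with a fixed normal projective compactification $\overline{X}$ of $X$ on which $B$ extends to a $\bQ$-Cartier divisor $\overline{B} \geq 0$ with $K_{\overline{X}} + \overline{B}$ also $\bQ$-Cartier (using \autoref{lem.DivisorOnAnOpenQCartierCompactification} and an enlargement of $\overline{B}$ if necessary). After choosing a suitable Cartier divisor $H \geq 0$ so $K_{\overline{X}} + \overline{B} + H \geq 0$, the whole problem translates into comparing the submodule of $\omega_{\overline{X}}$ generated by $\myB^0(\overline{X}, \overline{B} + H + \text{twist}, \cdots)$ after localizing and completing at~$x$ with the local BCM test ideal of \cite{MaSchwedeSingularitiesMixedCharBCM}.

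For the inclusion $\tau_+(\cO_X, B)_x \cdot \widehat{S} \subseteq \tau_+(\widehat{S}, \widehat{B})$, I would proceed by tracing a generator of $\tau_+(\cO_X, B) \otimes \sL$ produced by some $\myB^0(\overline{X}, \Gamma, \sL)$ through the flat base change $\cO_{\overline{X}, x} \to \widehat{S}$. The key point is that on the completion at $x$, the image of the map
\[
    H^{\dim \overline{X}}_{\fram}\bigl(\myR\Gamma(\overline{X}, \cO_{\overline{X}}(-M))\bigr) \longrightarrow H^{\dim \overline{X}}_{\fram}\bigl(\myR\Gamma(\overline{X}^+, \cO_{\overline{X}^+}(-\nu^{*}M + \nu^{*}\Gamma))\bigr)
\]
should map, after suitable (co)localization and completion, into the image of the analogous map defining $\tau_+(\widehat{S}, \widehat{B})$ through $\widehat{R^+}$, because $\overline{X}^+$ restricted to the formal neighborhood of $x$ maps to $\Spec(\widehat{S})^+ \to \Spec \widehat{R^+}$ compatibly. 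Combined with \autoref{prop.B0ForBigiIsGlobalSections} and the global generation result \autoref{t-eff}, this should give the easy inclusion.

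The reverse inclusion $\tau_+(\widehat{S}, \widehat{B}) \subseteq \tau_+(\cO_X, B)_x \cdot \widehat{S}$ is where I expect the main obstacle to lie. A candidate strategy is a lifting argument: an element of $\tau_{\widehat{R^+}}(\widehat{S}, \widehat{B})$ is witnessed by some finite extension $\widehat{S} \subseteq S'$ producing a specific cohomology class; one would like to spread this out to a finite cover $\overline{Y} \to \overline{X}$ so that the class comes from a global section in $\myB^0$. The difficulty is that $\widehat{R^+}$ is genuinely an analytic object, so lifting requires either (i) an Artin-approximation type statement allowing replacement of the formal finite extension by an algebraic one realized on a finite cover of $\overline{X}$, or (ii) a direct argument that the formation of the image in the defining map of $\myB^0$ is compatible with completion at $x$ (perhaps via a comparison with the derived completion of $\myR\Gamma(\overline{X}^+, \cdots)$ in the spirit of \cite{BhattAbsoluteIntegralClosure}). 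A related strategy would be to directly show that the defining colimit over finite covers for $\myB^0$ commutes with $\fram_x$-adic completion of the relevant local cohomology, analogous to how Frobenius stability makes this automatic in the equal characteristic $p > 0$ case.

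Once the first equality is in hand, the second equality $\tau_+(\cO_X, B)_x = \tau_+(\widehat{S}, \widehat{B}) \cap S$ follows formally: since $\tau_+(\cO_X, B)_x$ is an ideal of the Noetherian local ring $S$ and $S \to \widehat{S}$ is faithfully flat, the intersection with $S$ of its extension is the original ideal, so both the intersection in question and $\tau_+(\cO_X, B)_x$ agree. The truly hard part of the conjecture, as indicated above, is the reverse inclusion in the first displayed equation, which amounts to showing that the BCM-defined test ideal can be detected globally; I expect that progress here will depend on the forthcoming work alluded to after \autoref{cor.Tau+InvertPIsInMultiplier} on the equal-characteristic-zero comparison, together with a careful analysis of how $X^+$ interacts with completion.
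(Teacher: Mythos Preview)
The statement you are attempting to prove is labeled in the paper as a \emph{conjecture}, not a theorem; it appears in the final section ``Further open questions'' and the paper provides no proof.  So there is nothing to compare your argument against: the authors explicitly leave this open and present it as a direction for future work.

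Your proposal is consistent with this status.  You correctly identify that the second displayed equality follows formally from the first by faithful flatness of $S \to \widehat{S}$, and you correctly isolate the reverse inclusion $\tau_+(\widehat{S}, \widehat{B}) \subseteq \tau_+(\cO_X, B)_x \cdot \widehat{S}$ as the substantive obstacle.  But your proposal does not actually overcome that obstacle: the two strategies you sketch (Artin approximation to algebraize a formal finite extension, or showing the defining colimit commutes with completion) are plausible lines of attack, not arguments, and you yourself note that progress likely depends on forthcoming work.  Even the ``easy'' inclusion you outline is not fully justified: the claim that restricting $\overline{X}^+$ to a formal neighborhood of $x$ maps compatibly to $\Spec(\widehat{S})^+$ is exactly the kind of comparison between global and formal absolute integral closures that is delicate here, since $\widehat{S}^+$ involves extensions that need not arise from finite covers of $\overline{X}$.

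In short, what you have written is a reasonable outline of how one might try to attack the conjecture, and it accurately locates the difficulty, but it is not a proof.  The paper agrees with you that this is open.
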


\subsection{Perturbed test modules}

    \begin{conjecture}
        Suppose $X$ is a normal integral scheme projective over a complete local ring $(R, \fram)$ with residual characteristic $p > 0$.  Then for $B$ a $\bQ$-divisor on $X$:
        \[
            \tau_+(\omega_X, B) = \tau_+(\omega_X, B + \epsilon D)
        \]
        for $D > 0$ a Cartier divisor on $X$ and $1 \gg \epsilon > 0$.  
    \end{conjecture}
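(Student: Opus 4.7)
One direction is immediate: by monotonicity (\autoref{lem.BasicFactsAboutTauAndAdj} \autoref{lem.BasicFactsAboutTauAndAdj.a}), $\tau_+(\omega_X, B + \epsilon D) \subseteq \tau_+(\omega_X, B)$ for every $\epsilon > 0$, and as $\epsilon \to 0^+$ these form an ascending family of coherent subsheaves of $\omega_X$. By Noetherianity it stabilizes: there exists $\epsilon_0 > 0$ so that $\tau_+(\omega_X, B + \epsilon D)$ is independent of $\epsilon \in (0, \epsilon_0]$; denote the stable value $\tau_+^\circ(\omega_X, B)$. The conjecture is the reverse containment $\tau_+(\omega_X, B) \subseteq \tau_+^\circ(\omega_X, B)$, \ie right-continuity of $\tau_+$ in the coefficient of a Cartier divisor---an analog of the corresponding properties of multiplier and test ideals in equal characteristic.

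The natural strategy is to reduce to a local complete statement. By the affine-locality of $\tau_+$ established in \autoref{sec.Tau+OnAffineCharts}, the equality may be checked on a sufficiently fine affine cover of $X$, and in fact on stalks. Granting \autoref{conj.CommuteWithCompletion}, one may further pass to completions at arbitrary points $x \in X$. Our sheaf $\tau_+$ then agrees with the BCM test module $\tau_{\widehat{R^+}}(\omega_{\widehat R}, \widehat B)$ of \cite{MaSchwedeSingularitiesMixedCharBCM}, and right-continuity of the BCM test module in the coefficient of a principal divisor is a plausible analog of the discreteness of $F$-jumping numbers in positive characteristic---the perfectoidization machinery used there ought to supply uniform bounds of the required sort.

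Two obstacles stand in the way. First, \autoref{conj.CommuteWithCompletion} is itself open, so the affine-to-complete reduction above is conditional on it. Second---and I expect this to be the main difficulty---if one tries instead to argue directly from the $X^+$-theoretic definition of $\myB^0$, the heart of the issue is uniformity over finite covers. For a fixed finite dominant $f : Y \to X$ with $B$ of Cartier index $n$, the rounding $\lceil f^*(M - B) - \epsilon f^* D\rceil$ agrees with $\lceil f^*(M - B) \rceil$ once $\epsilon < 1/(n\, c_f)$, where $c_f$ is the maximal coefficient of $f^* D$ along a prime divisor of $Y$; but $c_f$ is unbounded as $f$ varies (for instance along a tower of $p^m$-th root extractions), so no single $\epsilon$ works for all $f$ simultaneously. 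In equal characteristic $p$ one avoids this by computing $\tau_+$ from a single finite cover via the Frobenius, and in characteristic zero from a single log resolution; in mixed characteristic neither shortcut is presently available, which is exactly why the statement remains conjectural.
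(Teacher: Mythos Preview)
Your analysis is essentially correct in recognizing that this statement is open: the paper does not prove this conjecture, but states it as a problem in the ``Further open questions'' section. Your proposal is not a proof, and you say so explicitly, so there is no gap to flag in that sense. Both you and the paper observe the same easy direction (monotonicity plus Noetherian stabilization of the ascending chain $\tau_+(\omega_X, B + \epsilon D)$ as $\epsilon \to 0^+$), and you correctly identify the reverse inclusion as the substantive content.

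Where your discussion differs from the paper's is in the proposed line of attack. You suggest reducing to the complete local case via the affine-locality of \autoref{sec.Tau+OnAffineCharts} together with \autoref{conj.CommuteWithCompletion}, and you are right to flag that this route is conditional on another open conjecture. The paper instead points out that even in the complete local case $X = \Spec R$ the statement is not known for the $R^+$-variant of $\tau_+$; what \emph{is} known (\cite{MaSchwedePerfectoidTestideal}) is a version using sufficiently large perfectoid big Cohen--Macaulay algebras in place of $R^+$. The paper's suggested approach for general $X$ is therefore different from yours: rather than localizing and completing, it proposes to work with a graded perfectoid big Cohen--Macaulay algebra over the section ring $S = \bigoplus_i H^0(X, \sL^i)$. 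Your diagnosis of the core difficulty---the lack of a single finite cover or resolution on which $\tau_+$ can be computed, so that the rounding threshold $\epsilon$ cannot be made uniform over all $f : Y \to X$---is accurate and is precisely why the perfectoid/BCM machinery is the natural candidate to supply the missing uniformity.
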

    
    Even in the case that $X = \Spec R$, this is not clear.   In that case, instead of using $X^+$, one uses a sufficiently large perfectoid big Cohen-Macaulay $S$-algebra, the conjecture is true, see \cite{MaSchwedeSingularitiesMixedCharBCM}.  One way to handle this in the case that $X \neq \Spec R$ would be to try to compute a variant of $\tau$ from a graded perfectoid big Cohen-Macaulay $S$ algebra where $S = \bigoplus H^0(X, \sL)$ is a section ring of $X$ with respect to an ample line bundle $\sL$.
    
    Alternately, we can bake a perturbation term into the definition as in \cite{MaSchwedePerfectoidTestideal}.  Indeed, suppose we have notation as in the conjecture above.  
    For a fixed $D > 0$, and $\epsilon_1 > \epsilon_2$ we see that $\tau_+(\omega_X, \Gamma+\epsilon_1 D) \subseteq \tau_+(\omega_X, \Gamma + \epsilon_2 D) \subseteq \omega_X$.  Thus at least the object $\tau_+(\omega_X, \Gamma + \epsilon D)$ is well defined for sufficiently small $\epsilon$ by the Noetherian property of $\omega_X$.

\subsection{Subadditivity}
    
    \begin{conjecture}
        Suppose that $X$ is regular.  Then
        \[
            \tau_+(\cO_X, \Delta_1 + \Delta_2) \subseteq \tau_+(\cO_X, \Delta_1) \cdot \tau_+(\cO_X, \Delta_2).
        \]
    \end{conjecture}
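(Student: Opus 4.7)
The plan is to emulate the Demailly--Ein--Lazarsfeld diagonal argument for multiplier ideals. Write $Y = X \times_R X$ with projections $p_1, p_2 : Y \to X$ and diagonal $\iota : X \hookrightarrow Y$. Assuming for the moment that $X$ is smooth over $\Spec R$ (the general regular-but-not-smooth case would require a further reduction, perhaps via local Cohen presentations or by separating the characteristic-zero and characteristic-$p$ stalks), $\iota$ is a regular closed embedding of codimension $d = \dim X$, which we may factor locally as a chain of smooth Cartier divisor inclusions $X = D_d \subset D_{d-1} \subset \cdots \subset D_0 = Y$. The conjectured subadditivity would then follow from the string of inclusions
\begin{align*}
    \tau_+(\cO_X, \Delta_1 + \Delta_2) & \subseteq \tau_+(\cO_Y, p_1^*\Delta_1 + p_2^*\Delta_2) \cdot \cO_X \\
    & \subseteq \bigl( p_1^{-1}\tau_+(\cO_X, \Delta_1) \cdot p_2^{-1} \tau_+(\cO_X, \Delta_2) \bigr) \cdot \cO_X \\
    & = \tau_+(\cO_X, \Delta_1) \cdot \tau_+(\cO_X, \Delta_2),
\end{align*}
using $\iota^*(p_1^*\Delta_1 + p_2^*\Delta_2) = \Delta_1 + \Delta_2$ and the diagonal identity $(p_1^{-1}I \cdot p_2^{-1}J) \cO_X = I \cdot J$.

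\textbf{Diagonal restriction (first and last inclusions).} These are both instances of a restriction-along-a-regular-embedding statement for $\tau_+$, and would be attacked by iterating adjunction along the filtration $D_\bullet$. At each stage, \autoref{thm.AdjunctionForAdjTau} gives $\adj_+^{D_i}(\cO_{D_{i-1}}, \cdots) \cdot \cO_{D_i} = \tau_+(\cO_{D_i}, \Diff_{D_i}(\cdots))$, and the sandwich in \autoref{lem.SimpleAdjointTestContainmentsForIdeals} lets us trade the adjoint version for a $\tau_+$ with a small perturbation by $(1 - \epsilon)D_i$. Because the $D_i$ are regular and transverse both to each other and to the supports of the $p_j^*\Delta_j$, the boundary and different terms arising along the way are SNC with coefficients less than one, so \autoref{prop.tauAgreesWithOmegaOnSNCLocus} ensures they contribute $\cO$ in a neighborhood of $\iota(X)$. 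Tracking this through all $d$ steps delivers the desired outer containments.

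\textbf{K\"unneth/product formula (middle inclusion).} This step is the principal obstacle. From the Matlis-dual presentation in \autoref{def.+StableSections}, what one would want is a K\"unneth-type decomposition of the top local cohomology of $Y^+$ matching the tensor product of the groups used to define $\myB^0$ on each factor, but absolute integral closure does not commute with fibre products: $(X \times_R X)^+$ is strictly larger than any product of two copies of $X^+$. The most promising workaround uses the inverse-limit description of \autoref{lem.BasicPropertiesOfB^0AndB^0_D}: for each pair of finite covers $f_i : Z_i \to X$ trivializing $\Delta_i$, the normalization of a dominant component of $Z_1 \times_R Z_2$ is a finite cover of $Y$ on which genuine K\"unneth does hold for $\myR\Gamma$ of external line bundles, and on which compatibility of traces and local cohomology can be checked. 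The hard part is verifying that this product subsystem is cofinal enough --- i.e.\ that passage to the full system defining $Y^+$ does not enlarge the image in top local cohomology beyond the external tensor product. This appears tightly linked to \autoref{conj.CommuteWithCompletion}, and a resolution might well combine the forthcoming characteristic-zero comparisons foreshadowed after \autoref{cor.Tau+InvertPIsInMultiplier} with the characteristic-$p$ identification \autoref{thm.AgreesWithCharPTestIdeal} on the closed fibre, where subadditivity for the classical test ideal is already known.
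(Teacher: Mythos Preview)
This statement is labeled a \emph{conjecture} in the paper and is placed in the section on further open questions; the paper does not prove it. The accompanying discussion only records that the analogous formula is known for multiplier ideals in characteristic zero and for test ideals in characteristic $p>0$, and that the local case $X=\Spec R$ is handled in \cite{MaSchwedePerfectoidTestideal} via a strategy modeled on Takagi's characteristic-$p$ argument. There is therefore no ``paper's own proof'' to compare against.

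Your outline follows the Demailly--Ein--Lazarsfeld diagonal template, which is the natural starting point and is in the same spirit as Takagi's method alluded to in the paper. You have correctly isolated the genuine obstruction: the K\"unneth/product step. The failure of $(X\times_R X)^+$ to decompose as a product of copies of $X^+$ is exactly why this remains open, and your proposed workaround via product covers $Z_1\times_R Z_2$ runs into the cofinality issue you flag --- there is no known reason that the images in top local cohomology stabilize over that subsystem. A secondary gap is your restriction/adjunction step: to iterate \autoref{thm.AdjunctionForAdjTau} along the filtration $D_\bullet$ you need each $D_{i-1}$ to be projective over a complete local ring, but the intermediate $D_i$ are only quasi-projective over $R$, and the paper's adjunction machinery is set up in the projective setting. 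Finally, the smoothness hypothesis you impose (versus mere regularity) is a real restriction in mixed characteristic and is not addressed. In short, your plan is a reasonable blueprint, but the middle inclusion is a genuine open problem rather than a technicality, which is why the paper states this as a conjecture.
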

        We expect this based on the fact that the same formula holds for multiplier ideals in characteristic zero and test ideals in characteristic $p > 0$.  In the local case, when $X = \Spec R$, it is proven in \cite[Section 5]{MaSchwedePerfectoidTestideal} for a variant of perturbed test ideals, using Andr\'e's $A_{\infty}$ instead of $R^+$, \cite{AndreDirectsummandconjecture,BhattDirectsummandandDerivedvariant}, but the method works for us when $X = \Spec R$.  Indeed, generalizations of this approach to certain general mixed $\bigplus$-test ideals has since been accomplished by Murayama in \cite{MurayamaSymbolicTestIdeal}.   All of these results used a strategy similar to, and inspired by, that of S.~Takagi in characteristic $p > 0$ \cite{TakagiFormulasForMultiplierIdeals} (also see \cite[Proposition 2.11(d)]{BlickleMustataSmithDiscretenessAndRationalityOfFThresholds} for an equivalent-up-to-duality proof in characteristic $p > 0$, or \cite{HaraYoshidaGeneralizationOfTightClosure} for a characteristic $p > 0$ proof corresponding more closely to the classical subadditivity proof for multiplier ideals \cite{DemaillyEinLazSubadditivity}).

 \subsection{Discreteness and rationality of jumping numbers}

    Based on the characteristic $0$ and $p > 0$ contexts, we would expect the following to hold.

    \begin{conjecture}
        Suppose that $X$ is $\bQ$-Gorenstein and $\Delta \geq 0$ is a $\bQ$-Cartier $\bQ$-divisor.  The set of real numbers $t > 0$ such that $\tau_+(\cO_X, t\Delta) \neq \tau_+(\cO_X, (t-\epsilon)\Delta)$ for all $1 \gg \epsilon > 0$, are a set of rational numbers without accumulation points in $\bR$.
    \end{conjecture}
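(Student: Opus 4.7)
The plan is to adapt the strategy used for multiplier ideals (characteristic zero) and test ideals (characteristic $p>0$): reduce to the case of a Cartier divisor via finite covers, reduce to a bounded range of $t$ by integer translation, and finally produce candidate jumping numbers from a birational model. I expect the last step to require genuinely new input beyond the tools developed in this paper.

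\emph{Step 1 (Finite cover to Cartier $\Delta$).} Choose a finite surjective map $f\colon Y \to X$ from a normal integral scheme such that $f^*K_X$ and $f^*\Delta$ are both Cartier. By \autoref{thm.TransofmrationRulesUnderFiniteMaps} and \autoref{lem.BasicFactsAboutTauAndAdj} \autoref{lem.BasicFactsAboutTauAndAdj.b},
\[
   \tau_+(\omega_X, K_X + t\Delta) \;=\; \Tr\!\bigl(f_*\bigl(\tau_+(\omega_Y, t\Delta_Y)\otimes\cO_Y(-f^*K_X)\bigr)\bigr),
\]
where $\Delta_Y := f^*\Delta$. Since the trace map is $\cO_X$-linear, any jumping number of $\tau_+(\cO_X,t\Delta)$ is a jumping number of $\tau_+(\omega_Y, t\Delta_Y)$, so it suffices to establish the conjecture for the Cartier divisor $\Delta_Y$ on $Y$.

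\emph{Step 2 (Reduce to a bounded interval).} Applying \autoref{lem.BasicFactsAboutTauAndAdj} \autoref{lem.BasicFactsAboutTauAndAdj.b} again to the Cartier divisor $\lfloor t\rfloor \Delta_Y$ gives
\[
   \tau_+(\omega_Y, t\Delta_Y) \;=\; \tau_+\!\bigl(\omega_Y, (t-\lfloor t\rfloor)\Delta_Y\bigr)\otimes \cO_Y\!\bigl(-\lfloor t\rfloor\Delta_Y\bigr).
\]
Hence the set of jumping numbers is a union of integer translates of the set of jumping numbers in $[0,1)$, and it suffices to establish discreteness and rationality inside this compact interval.

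\emph{Step 3 (Candidate jumping numbers and the main obstacle).} Let $\mu\colon Y'\to Y$ be a projective birational map from a regular integral scheme such that $\mu^*\Delta_Y$ has simple normal crossings support $\sum d_E E$ (available using recent progress on resolution of singularities and regular alterations in mixed characteristic). By \autoref{p-tau},
\[
   \tau_+(\omega_Y, t\Delta_Y) \;\subseteq\; \Tr\!\bigl(\mu_*\cO_{Y'}(K_{Y'} - \lfloor t\mu^*\Delta_Y\rfloor)\bigr),
\]
and the right-hand side is locally constant in $t$ on intervals between the finitely many rational candidates $k/d_E$ lying in $[0,1)$. If this inclusion were an equality at a generic $t$ in each such interval, the conjecture would follow, since jumping numbers would be forced into the discrete rational set of candidates. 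Establishing such an equality is the main obstacle: in characteristic zero it rests on Kawamata--Viehweg vanishing on $Y'$, and in characteristic $p>0$ on Frobenius stability of test ideals. In the mixed characteristic setting I propose to attempt it via Bhatt's vanishing on $Y'^+$ (\autoref{BhattVanishing}) combined with a Skoda--Koszul argument in the spirit of the proof of \autoref{t-eff}, producing enough $\myB^0$-sections of $\mu_*\cO_{Y'}(K_{Y'} - \lfloor t\mu^*\Delta_Y\rfloor)$ to force equality after trace. Alternatively, \autoref{conj.CommuteWithCompletion} would reduce the problem to $X=\Spec R$, where discreteness and rationality of jumping numbers can be approached via perfectoid/BCM test ideal techniques along the lines of \cite{MaSchwedePerfectoidTestideal}.
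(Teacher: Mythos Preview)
The paper does not prove this statement: it is listed as an open conjecture in the final section (``Further open questions''), with only the remark that the analogous statement is known in characteristic zero (via a single resolution) and in characteristic $p>0$ (by a more involved argument). There is therefore no paper proof to compare against, and your proposal is not expected to succeed with the tools developed here.

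Your Steps 1 and 2 are sound reductions, and you are candid that Step 3 does not close. Two further points about Step 3 are worth flagging. First, you invoke a projective birational map from a \emph{regular} scheme to $Y$; resolution of singularities is not known in mixed characteristic, so this is an additional hypothesis rather than something available ``using recent progress''. One could instead use a regular alteration, but then the comparison in \autoref{p-tau} involves a trace that is no longer an inclusion, which weakens the argument. Second, even granting a resolution, the key step---that the containment of \autoref{p-tau} is an equality for generic $t$---is precisely what is not known in this setting; the Skoda/Koszul and Bhatt-vanishing arguments in the paper produce sections on $X$, not on a higher model $Y'$, and there is no mechanism here to force $\tau_+$ to be computed on a single birational model. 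This is exactly the obstacle the paper leaves open.
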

    
    In characteristic zero, the corresponding statement about the multiplier ideal follows immediately from the fact that multiplier ideals can be computed on a single resolution.  In characteristic $p > 0$ however, the corresponding statement requires a more involved proof.  

\subsection{Alternate descriptions}
    Suppose $R$ is a complete local domain with positive characteristic residue field and $S$ is a normal finite type $R$-algebra and $\Delta \geq 0$ a $\bQ$-divisor on $\Spec S$ such that $K_S + \Delta$ is $\bQ$-Cartier.  By projectively compactifying $\Spec S$ over $\Spec R$, we can define a test ideal on $\Spec S$ and hence $\tau_+(S, \Delta) \subseteq S$ an ideal of $S$.  Note it could happen that $S$ is purely characteristic zero.

    \begin{question}
        Is there a description of $\tau_+(S, \Delta)$ that does not rely on a compactification of $\Spec S$ over $\Spec R$?  For instance, does $\tau_+(S, \Delta)$ agree with the $T^0(\Spec S, \Delta, \cO_{\Spec S})$ as defined in \cite{TY-MMP}?  Is there a description of $\tau_+(S, \Delta)$ via some closure operation as in tight closure?  
    \end{question}

    A positive answer to \autoref{conj.CommuteWithCompletion} might also shed light on this question.

\bibliographystyle{skalpha}
\bibliography{MainBib}

\end{document}